 \newtheorem{thm}{Theorem}[section]
 \newtheorem{cor}[thm]{Corollary}
 \newtheorem{lem}[thm]{Lemma}
 \newtheorem{prop}[thm]{Proposition}
 \theoremstyle{definition}
 \newtheorem{defn}[thm]{Definition}
 \theoremstyle{remark}
 \newtheorem*{ex}{Example}
 \numberwithin{equation}{section}
\begin{document}
%
%
%
%
%
%
%
%
%

\title[ Sequentially Right-like properties on Banach spaces]
{Sequentially Right-like properties on Banach spaces}
\author {M. Alikhani.}
\address{Department of Mathematics, University of Isfahan}
\email{m2020alikhani@ yahoo.com}
\subjclass{46B20, 46B25,46B28.}
\keywords{Dunford-Pettis relatively compact property; pseudo weakly compact operators; sequentially Right property}
\date{January 1, 2004}
\begin{abstract}
In this paper, we first study concept of $ p $-sequentially Right property, which is $ p$-version of
the sequentially Right property.\ Also,
we introduce a new class of subsets of Banach spaces which is called
$ p$-Right$ ^{\ast} $ set and obtain the relationship between p-Right subsets and p-Right$ ^{\ast} $
subsets of dual spaces.\ Furthermore, for $ 1\leq
p<q\leq\infty, $ we introduce the concepts of
properties $ (SR)_{p,q}$ and $ (SR^{\ast})_{p,q}$ in order to find a
condition which every Dunford-Pettis $ q $-convergent operator is Dunford-Pettis $p$-convergent.\ Finally, we apply these concepts and obtain
some characterizations of $ p $-Dunford-Pettis relatively compact property of Banach spaces and their dual spaces.
\end{abstract}
\maketitle
\section{ Introduction }
Peralta et al. \cite{pvwy}, proved that for a given Banach space X there is a locally convex topology
on $ X, $ which is called the“ Right topology ”, such that a linear map $ T $ from $ X$ into a Banach
space $ Y $ is weakly compact if and only if it is Right-to-norm sequentially continuous.\
Later on Kacena \cite{ka} by introducing the notion of Right set in $ X^{\ast} $(dual of $ X$), showed that a Banach
space $ X $ has the sequentially Right property if and only if every Right subset of $ X^{\ast}, $ is relatively
weakly compact.\ A bounded subset $ K $ of $ X^{\ast} $ is a Right set, if every Right-null sequence $ (x_{n})_{n} $
in $ X $ converges uniformly to zero on $ K; $ that is,
$$\lim_{n\rightarrow \infty} \sup_{x^{\ast}\in K}\mid x^{\ast} (x_{n})\mid =0. $$
Recall that a bounded
subset $ K$ of Banach space $ X $ is a Dunford-Pettis set, if
every weakly null sequence $(x^{\ast}_{n})_n $ in $ X^{\ast}, $ converges uniformly to zero on the set $K$ \cite{An}.\ A Banach space $ X $ has the Dunford-Pettis relatively compact property (in short $ X$ has the $(DPrcP) $), if every Dunford-Pettis subset of $ X$ is relatively compact \cite{e1}.\\  For more information and examples of Banach spaces with Dunford-Pettis relatively compact property and sequentially Right property,
we refer to \cite{e1, ka, pvwy}.\\
Recently, Ghenciu \cite{g9} introduced the concepts of Dunford-Pettis $ p$-convergent operators,
$ p$-Dunford-Pettis relatively compact property (in short $ p$-$ (DPrcP)$), $ p $-Right sets and $ p $-sequentially
Right property (in short $ p$-$ (SR) $) on Banach spaces as follows:
\begin{itemize}
\item An operator $T: X\rightarrow Y $
is said to be Dunford-Pettis $ p$-convergent, if it takes Dunford-Pettis weakly $ p $-summable sequences to norm null sequences.\ The class of Dunford-Pettis $ p $-convergent operators from $ X $ into $ Y $ is denoted by $ DPC_{p}(X,Y). $\
\item A Banach space $ X$ has the $ p $-Dunford-Pettis relatively compact property ($ X $ has the $ p $-$ (DPrcP) $), if every Dunford-Pettis weakly p-summable sequence $ (x_{n})_{n} $ in $ X $ is norm null.\
\item A bounded subset $ K $ of $ X^{\ast} $ is called a $ p$-Right set, if every Dunford-Pettis weakly $ p$-summable sequence $ (x_{n})_{n} $ in $ X$ converges uniformly to zero on $ K,$ that is,
$$\displaystyle\lim_{n} \displaystyle\sup_{x^{\ast}\in K}\vert x^{\ast}( x_{n}) \vert=0.$$\
\item A Banach space $ X $ has the $ p $-sequentially Right property ($ X $ has the $ p$-$(SR) $), if every $ p $-Right set in $ X^{\ast}$ is relatively weakly compact.\
\end{itemize}

Motivated
by the above works, in Section 3, we introduce the concepts of
$ p $-Right$ ^{\ast} $ sets and $ p $-sequentially Right$ ^{\ast} $ property on Banach spaces.\  Then, we obtain the relationship between
$ p $-Right subsets and $ p $-Right$ ^{\ast} $ subsets of dual spaces.\ In addition, the stability of $ p $-sequentially Right property for some subspaces of bounded linear operators and projective tensor product between two Banach spaces are investigated.\\
In the Section 4, for $ 1\leq p<q\leq\infty $
inspired by the class $ \mathcal{P}_{p,q} $ in \cite{CAN}, for those Banach spaces in
which relatively $ p$-compact sets are relatively $ q$-compact, we
introduce the concepts of properties $ (SR)_{p,q}$ and $ (SR^{\ast})_{p,q}$ for those
Banach spaces in which Dunford-Pettis $ q $-convergent operators are Dunford-Pettis $ p $-convergent operators.\
Finally, by applying these
concepts, some characterizations for the $ p$-Dunford-Pettis relatively compact property of Banach spaces and
their dual spaces are investigated.\
Note that, the our results are motivated by results in \cite{bpo,g9, ka, pvwy,CAN}.\
\section{Definitions and Notions}
Throughout this paper $ 1\leq p < \infty ,$ $ 1\leq p < q\leq \infty,$ except for the cases where we consider other assumptions.\ Also, we suppose
$ X,Y $ and $ Z$ are arbitrary Banach spaces, $p^{\ast}$ is the H$\ddot{\mathrm{o}}$lder conjugate of $p;$ if $ p=1,~~ \ell_{p^{\ast}} $
plays the role of $ c_{0} .$\ The unit coordinate vector in $ \ell_{p} $ (resp.\ $ c_{0} $ or $\ell_{\infty} $) is denoted by $ e_{n}^{p} $ (resp.\ $ e_{n} $).\
The space $ X $ embeds in $ Y, $ if $ X $ is isomorphic to a closed subspace of $Y$ (in short we denote $ X\hookrightarrow Y $).\ We denote two isometrically isomorphic spaces $ X $ and $ Y $ by
$ X\cong Y.$\ The word `operator' will always mean
a bounded linear operator. For any Banach space X, the dual space of bounded
linear functionals on X will be denoted by $ X^{\ast} .$\
Also we use $ \langle x,x^{\ast}\rangle $ or $ x^{\ast}(x) $
for the duality between $ x\in X $ and $ x^{\ast}\in X^{\ast}. $\ We denote the closed unit ball of $ X$ by $ B_{X} $ and the identity
operator on $ X$ is denoted by $ id_{X}. $\
For a bounded linear operator $ T : X \rightarrow Y, $ the adjoint of the operator $ T $
is denoted by $ T^{\ast}. $\ The space of all bounded linear operators, weakly
compact operators, and compact operators from $ X$ to $ Y $ will be denoted by
$ L(X, Y ), W(X, Y ), $ and $ K(X, Y ) ,$ respectively.\ The projective tensor product of two Banach spaces $ X $ and $ Y $ will
be denoted by $ X\widehat{\bigotimes}_{\pi} Y .$\\
A bounded linear operator $ T : X \rightarrow Y $ is called completely continuous,
if $  T$ maps weakly convergent sequences to norm convergent sequences \cite{AlbKal}.\ The set
of all completely continuous operators from $ X $ to $  Y$ is denoted by $ CC(X, Y ). $\
 A bounded linear $ T$ from a Banach space $ X$ to a Banach space $ Y$ is called
Dunford-Pettis completely continuous, if  it transforms Dunford-Pettis and weakly
null sequences to norm null ones.\ The class of Dunford-Pettis completely
continuous operators from $ X $ into $  Y$ is denoted by $ DPcc(X,Y). $\\
 A sequence $ (x_{n})_{n} $ in $ X $ is called weakly $ p $-summable, if $ (x^{\ast}(x_{n}))_{n} \in \ell_{p}$ for each $ x^{\ast}\in X^{\ast}. $\ We denote the set of all weakly $ p $-summable sequences in $ X $ is denoted by $ \ell_{p} ^{w}(X)$ \cite{djt}.\ The weakly $ \infty $-summable sequences are precisely the weakly null sequences.\
Note that, a sequence $(x_{n})_{n}$ in $ X $ is said to be weakly $ p $-convergent to $ x\in X$ if $ (x_{n} - x)_{n}$
is weakly $ p $-summable.\ A sequence $(x_{n})_{n}$ in a Banach space $ X $
is weakly $ p$-Cauchy if for each pair of strictly increasing sequences $(k_{n})_{n}$ and $(j_{n})_{n}$ of positive integers, the sequence $(x_{k_{n}}- x_{j_{n}})_{n}$ is weakly $ p $-summable in $ X $ \cite{ccl}.\ Notice that, every weakly $ p $-convergent sequence is weakly $ p $-Cauchy, and the weakly $ \infty $-Cauchy sequences are precisely the weakly Cauchy sequences.\
A bounded linear operator $ T $ between two Banach spaces is called $ p $-convergent, if it transforms weakly $ p $-summable sequences into norm null sequences \cite{C1}.\ We denote the class of $ p $-convergent operators from $ X $ into $ Y$ by $ C_{p}(X,Y) .$\ A Banach space $ X $ has the $ p $-Schur property (in short $ X\in(C_{p}) $), if  the
identity operator on $ X $ is $ p $-convergent.\
A Banach space $ X$ has the Dunford-Pettis property of order $p$ ($X $ has the $ (DPP_{p})$), if every weakly compact operator on
X is p-convergent. Equivalently, $ X $ has the $(DPP_{p}) $ if and only if for every weakly $ p$-summable sequence $ (x_{n})_n $ in $ X $ and weakly-null sequence $ (x^{\ast}_{n})_n $ in $ X^{\ast},$ we have $x^{\ast}_{n}(x_{n})\rightarrow 0$ as $ n\rightarrow\infty$ \cite{cs}.\\
A subset $ K $ of a Banach space $ X $ is called relatively weakly $p$-compact, if each sequence in $ K $ admits a weakly $ p$-convergent subsequence with limit in $ X .$\ If the ``limit point” of each weakly $ p $-convergent subsequence lies in $ K, $ then we say that $ K $ is a weakly p-compact set.\ Note that, the weakly $ \infty $-compact sets
are precisely the weakly compact.\
A bounded linear operator $ T : X \rightarrow Y $ is called weakly $ p $-compact, if $ T (B_{X}) $ is a relatively weakly $ p $-compact set in $ Y.$\
The set of all weakly
$ p $-compact operators $ T : X \rightarrow Y $ is denoted by $  W_{p}(X, Y ). $\ We refer the reader for undefined terminologies to the
classical references \cite{AlbKal, di1}.\
\section{ p-sequentially Right and p-sequentially Right$ ^{\ast} $ properties\\ on Banach spaces}
 The authors in \cite{ce1, g8} by using Right topology, proved that
a sequence $ (x_{n})_{n} $ in a Banach space $ X$ is Right null if and only if it is Dunford-Pettis and weakly null.\
Also, they showed that a sequence $ (x_{n})_{n} $ in a Banach space $ X$ is Right Cauchy if and only if it is Dunford-Pettis and weakly Cauchy.\\
 Inspired by the above works, for convenience,
we apply the notions  $  p$-Right null and $ p $-Right Cauchy sequences instead of weakly $ p $-summable and weakly $ p $-Cauchy sequences which are Dunford-Pettis sets, respectively.\\
The main aim of this section is to obtain some characterizations of $ p $-Right sets that are relatively weakly $ q $-compact.\
\begin{defn}\label{d1} $ \rm{(i)} $ A bounded subset $ K $ of a Banach space $ X $ is said to be $ p $-Right$ ^{\ast}$ set, if for
every $ p $-Right null sequence $ (x^{\ast}_{n})_{n} $ in $ X^{\ast} $ it follows:
$$ \lim_{n} \sup_{x\in K}\vert x_{n}^{\ast}(x) \vert=0.$$\
$ \rm{(ii)} $ We say that $ X $ has the $ p $-sequentially Right$ ^{\ast} $ property (in short $ X$ has the $ p $-$(SR^{\ast})$ property), if every $ p $-Right$ ^{\ast} $ set is relatively weakly
compact.\
\end{defn}
It is easy to verify that, $ \infty $-Right$ ^{\ast} $ sets are precisely Right$ ^{\ast} $ sets and the $ \infty $-$ (SR)^{\ast} $ property is precisely the sequentially Right$ ^{\ast} $ property (see,  \cite{g6}).

Suppose that $ K $ is a bounded subset of $ X$ and
$ B(K) $ is the Banach space of all bounded real-valued functions
defined on $ K$, provided with the superemum norm.\ The natural evaluation map $ E : X^{\ast}\rightarrow B(K) $ defined by
$ E(x^{\ast})(x) = x^{\ast}(x)$ for every $ x \in K,~~~ x^{\ast} \in X^{\ast},$ has been used by many authors to study properties of $ K.$\ Similarly, if $ K $ is a bounded
subset of $ X^{\ast}, $ the natural evaluation map $ E_{X} :X \rightarrow B(K) $ defined by $ E_{X}(x)(x^{\ast}) =x^{\ast}(x)$ for every $ x \in X,~~~ x^{\ast} \in K,$
(see, \cite{bpo}).\\
At the first, inspired
by Theorem 3.1 of \cite{bpo}, we obtain some characterizations of notions $ p $-Right sets and $ p $-Right$ ^{\ast} $ sets by using evaluation maps
which will be used in the sequel.\

\begin{lem}\label{l1} The following statements hold:\\
$ \rm{(i)} $ If $ T\in L(X,Y),$ then
$ T $ is Dunford-Pettis $ p $-convergent if and only if $ T^{\ast}(B_{Y^{\ast}}) $ is a $ p $-Right subset of $ X^{\ast}. $\\
$ \rm{(ii)} $
A bounded subset $ K $ of $ X^{\ast} $ is a $ p $-Right set if and only if $ E_{X}:X\rightarrow B(K) $ is Dunford-Pettis $ p $-convergent.\\
$ \rm{(iii)} $ If $ T \in L(X,Y) ,$ then
$ T^{\ast} $ is Dunford-Pettis $ p $-convergent if and only if $ T(B_{X}) $ is a $ p $-Right$ ^{\ast} $ subset of $ Y.$ \\
$ \rm{(iv)} $ $ X^{\ast} $ has the $ p $-$ (DPrcP) $ if and only if every bounded subset of $ X $ is a $ p $-Right$ ^{\ast} $ set.\\
$ \rm{(v)} $
A bounded subset $ K $ of $ X$ is a $ p $-Right$ ^{\ast} $ set if and only if $ E:X^{\ast}\rightarrow B(K) $
is Dunford-Pettis $ p $-convergent.\\
$ \rm{(vi)} $ A bounded subset $ K $ of $ X $ is a $ p $-Right$ ^{\ast} $ set if and only if there is a Banach space
$ Y $ and a bounded linear operator $ T:Y\rightarrow X $ so that $ T$ and $ T^{\ast} $ are Dunford-Pettis $ p $-convergent
and $ K\subseteq T(B_{Y}) .$\
\end{lem}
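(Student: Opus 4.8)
The plan is to prove (i)--(v) by a direct definition-chase, the only non-formal input being the Hahn--Banach identity $\|z\|=\sup_{z^{\ast}\in B_{X^{\ast}}}|z^{\ast}(z)|$ together with the fact that the norm of an element of $B(K)$ is the supremum of its absolute values on $K$. For (i) I would note that $\sup_{y^{\ast}\in B_{Y^{\ast}}}|(T^{\ast}y^{\ast})(x_{n})|=\sup_{y^{\ast}\in B_{Y^{\ast}}}|y^{\ast}(Tx_{n})|=\|Tx_{n}\|$, so a Dunford-Pettis weakly $p$-summable sequence $(x_{n})_{n}$ in $X$ is carried by $T$ to a norm-null sequence if and only if it converges uniformly to zero on the bounded set $T^{\ast}(B_{Y^{\ast}})$; this is precisely the assertion. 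Part (iii) is the same computation applied to $T^{\ast}$, via $\sup_{x\in B_{X}}|(T^{\ast}y^{\ast}_{n})(x)|=\sup_{y\in T(B_{X})}|y^{\ast}_{n}(y)|$. For (ii) and (v) I would observe that for bounded $K$ the evaluation maps are bounded operators with $\|E_{X}(x)\|_{B(K)}=\sup_{x^{\ast}\in K}|x^{\ast}(x)|$, respectively $\|E(x^{\ast})\|_{B(K)}=\sup_{x\in K}|x^{\ast}(x)|$, so that Dunford-Pettis $p$-convergence of the evaluation map is word for word the defining condition for $K$ to be a $p$-Right, respectively $p$-Right$^{\ast}$, set. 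Finally (iv) is obtained by testing: if $X^{\ast}$ has the $p$-$(DPrcP)$, then $\sup_{x\in K}|x^{\ast}_{n}(x)|\le(\sup_{x\in K}\|x\|)\,\|x^{\ast}_{n}\|\to 0$ for every bounded $K\subseteq X$ and every Dunford-Pettis weakly $p$-summable $(x^{\ast}_{n})_{n}$, while conversely applying the $p$-Right$^{\ast}$ condition to $K=B_{X}$ yields $\|x^{\ast}_{n}\|\to 0$.

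The substance of the lemma is (vi). For the ``if'' direction, given $T:Y\to X$ with $T^{\ast}$ Dunford-Pettis $p$-convergent and $K\subseteq T(B_{Y})$, part (iii) shows that $T(B_{Y})$ is a $p$-Right$^{\ast}$ subset of $X$; since $\sup_{x\in A}|x^{\ast}_{n}(x)|$ is monotone in $A$, any subset of a $p$-Right$^{\ast}$ set is $p$-Right$^{\ast}$, hence so is $K$. (The hypothesis that $T$ itself be Dunford-Pettis $p$-convergent is not needed here; it is kept so that both implications refer to the same class of operators.) For the ``only if'' direction I would, after rescaling so that $K\subseteq B_{X}$, take $Y=\ell_{1}(\Gamma)$ with index set $\Gamma=K$ and let $T:\ell_{1}(\Gamma)\to X$ be the summation operator $T\big((a_{x})_{x\in K}\big)=\sum_{x\in K}a_{x}x$. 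Then $\|T\|\le 1$ and $T(e_{x})=x$, so $K\subseteq T(B_{\ell_{1}(\Gamma)})$, while $T(B_{\ell_{1}(\Gamma)})\subseteq\overline{\mathrm{aco}}(K)$. Because passing to the closed absolutely convex hull does not change $\sup|x^{\ast}_{n}(\cdot)|$, the set $\overline{\mathrm{aco}}(K)$ is again $p$-Right$^{\ast}$, so $T(B_{\ell_{1}(\Gamma)})$ is $p$-Right$^{\ast}$ and (iii) gives that $T^{\ast}$ is Dunford-Pettis $p$-convergent. That $T$ is Dunford-Pettis $p$-convergent follows because a weakly $p$-summable sequence in $\ell_{1}(\Gamma)$ is weakly null (immediately for $p=\infty$, and for $p<\infty$ since each coordinate functional places it in $\ell_{p}\subseteq c_{0}$) and hence norm-null by the Schur property of $\ell_{1}(\Gamma)$; thus $\ell_{1}(\Gamma)$ has the $p$-$(DPrcP)$, so $\mathrm{id}_{\ell_{1}(\Gamma)}$, and therefore $T$, is Dunford-Pettis $p$-convergent.

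The one genuine decision is the choice of $Y$ in (vi): the naive candidate---the linear span of $K$ with the Minkowski functional of $\overline{\mathrm{aco}}(K)$ and the inclusion into $X$---does not work, because that inclusion need not be Dunford-Pettis $p$-convergent (already for $K=B_{c_{0}}$ with $p=\infty$, where $B_{c_{0}}$ is a Right$^{\ast}$ set but the unit vector sequence is a Dunford-Pettis weakly null sequence in $c_{0}$ that is not norm-null). Replacing $Y$ by $\ell_{1}(\Gamma)$ repairs this, since $\ell_{1}(\Gamma)$ carries the $p$-$(DPrcP)$ for free while its ball still maps onto a set containing $K$; after that, one property of $T$ is a direct computation and the other is exactly (iii) combined with the stability of the $p$-Right$^{\ast}$ property under closed absolutely convex hulls and under subsets. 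Organizationally I would prove (ii) and (v) first, read off (i), (iii), (iv) and the stability remarks, and devote the bulk of the write-up to the construction in (vi).
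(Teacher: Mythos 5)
Your proof is correct and follows essentially the same route as the paper: parts (i)--(v) are the same norm/supremum identities for the adjoint and the evaluation maps, and part (vi) uses the identical construction $Y=\ell_{1}(K)$ with the summation operator $T(f)=\sum_{k\in K}f(k)k$, getting Dunford--Pettis $p$-convergence of $T$ from the Schur property of $\ell_{1}(K)$. The only (harmless) divergences are that you obtain the Dunford--Pettis $p$-convergence of $T^{\ast}$ from part (iii) together with the stability of $p$-Right$^{\ast}$ sets under closed absolutely convex hulls, whereas the paper identifies $T^{\ast}$ with the evaluation map $E:X^{\ast}\rightarrow B(K)$ and invokes (v), and that you spell out the easy converse implication in (vi), which the paper leaves implicit.
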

\begin{proof}
$ \rm{(i)} $ Suppose that $ T:X\rightarrow Y $ is a bounded linear operator.\ Clearly, $ T^{\ast}(B_{Y^{\ast}}) $ is a $ p $-Right set if and only if
$$ \lim_{n} \Vert T(x_{n}) \Vert= \lim_{n} (\sup\lbrace \vert \langle y^{\ast}, T( x_{n})\rangle \vert :y^{\ast}\in B_{Y^{\ast}}\rbrace ) $$
$$= \lim_{n} (\sup\lbrace \vert \langle T^{\ast}(y^{\ast}), x_{n}\rangle \vert :y^{\ast}\in B_{Y^{\ast}}\rbrace )=0$$ for each $ p $-Right null sequence $ (x_{n})_{n} $ in $ X$ if and only if $ T $ is Dunford-Pettis $ p $-convergent.\\
$ \rm{(ii)} $ Let $ K $ be a bounded subset of $ X^{\ast}. $\ The evaluation
map $ E_{X}:X\rightarrow B(K) $ is Dunford-Pettis $ p $-converging if and only if $ \Vert E_{X}(x_{n}) \Vert\rightarrow 0$ for each $ p $-Right null sequence $ (x_{n})_{n} $ in $ X $ if and only if
$$ \lim_{n} (\sup \lbrace \vert x^{\ast}(x_{n})\vert :x^{\ast} \in K\rbrace )=0$$ for each $ p $-Right null sequence $ (x_{n})_{n} $ in $ X $ if and only if $K$ is a $ p $-Right set.\\
$ \rm{(iii)} $ Suppose that $ T:X\rightarrow Y $ is a bounded linear operator.\ Clearly, $ T(B_{X}) $ is a $ p $-Right$ ^{\ast} $ set if and only if $$ \lim_{n} \Vert T^{\ast}(y^{\ast}_{n}) \Vert= \lim_{n} (\sup\lbrace \vert \langle x, T^{\ast}( y^{\ast}_{n})\rangle \vert :x\in B_{X}\rbrace ) $$ $$= \lim_{n} (\sup\lbrace \vert \langle T(x), y^{\ast}_{n}\rangle \vert :x\in B_{X}\rbrace )=0$$ for each $ p $-Right null sequence $ (y^{\ast}_{n})_{n} $ in $ X^{\ast}$ if and only if $ T^{\ast} $ is Dunford-Pettis $ p $-convergent.\\
$ \rm{(iv)} $ is clear.\\
$ \rm{(v)} $ Suppose that $ K $ is a bounded subset of $ X $ and $ E:X^{\ast}\rightarrow B(K) $ is a Dunford-Pettis $ p $-convergent operator.\ Thus $ E^{\ast} $ maps the unit ball of $ B(K) ^{\ast},$ to a
$ p $-Right set in $ X^{\ast\ast} .$\ However, if $ k \in K $ and $ \delta_{k}$ denotes the point mass at $ k, $ then
$ E^{\ast}(\lbrace\delta_{k}:k \in K\rbrace) = K, $
and so $ K $ is a $ p $-Right set in $ X^{\ast\ast} .$\ Hence $ K $ is a $ p $-Right$ ^{\ast} $ set in $ X. $\
Conversely, suppose that $ K$ is a $ p $-Right$ ^{\ast} $ set in $ X$, and let $ E:X^{\ast}\rightarrow B(K) $
be the evaluation map.\ If $ (x^{\ast}_{n})_{n} $ is a $ p $-Right null sequence in $ X^{\ast}, $
then
$$\lim_{n}\Vert E(x^{\ast}_{n})\Vert=\lim_{n} (\sup\lbrace \vert x^{\ast}_{n}(x) \vert :x \in K\rbrace)=0, $$
and $ E$ is a Dunford-Pettis $ p$-convergent operator.\\
$ \rm{(vi)} $ Suppose that $ K $ is a $ p $-Right set and $ Y=\ell_{1} (K).$\
Define $ T:Y\rightarrow X $ by $ T(f) =\sum_{k\in K} f(k)k $ for $f \in \ell_{1}(K) .$\ It is clear that
$ T $ is a bounded linear operator, and $ K\subseteq T(B_{\ell_{1}(K)}).$
Since $ \ell_{1} (K)$ has the Schur property, the operator $ T$ is completely continuous and so, it is
$ p $-convergent.\ Thus, $ T $ is a Dunford-Pettis $ p $-convergent operator.\ It is easy to verify that, $ T^{\ast} $ is the
evaluation map $ E:X^{\ast} \rightarrow B(K).$\ Hence, $ T^{\ast} $ is Dunford-Pettis $ p $-convergent by $\rm{ (v)}. $\
\end{proof}
Recall that, a bounded subset $ K $ of $ X^{\ast} $ is called an $ (L) $ set, if each weakly null sequence
$ (x_{n})_{n} $ in $  X$ tends to $ 0 $ uniformly on $ K $ \cite{AlbKal}.\\
Bator et al.\ \cite{bpo} showed that every $ (L) $ subset of $ X^{\ast} $ is a Dunford-Pettis set in $ X^{\ast} $ if and only if $ T^{\ast\ast} $ is completely continuous whenever $ Y $ is an arbitrary Banach space and $ T:X\rightarrow Y $ is a completely continuous operator.\\
It is easy to verify that, for each $ 1\leq p\leq \infty, $ every $ p $-Right$ ^{\ast} $ subset of dual space is a $ p $-Right set, while the converse of implication is false.\
The following theorem continues our study of the relationship between
$ p $-Right subsets and $ p $-Right$ ^{\ast} $ subsets of dual spaces.
\begin{thm}\label{t1} Every $ p$-Right subset of $ X^{\ast} $ is a $ p $-Right$ ^{\ast} $ set in $ X^{\ast} $ if and only if
$ T^{\ast\ast} $ is a Dunford-Pettis $ p$-convergent operator whenever $ Y $ is an arbitrary Banach space and
$ T : X \rightarrow Y $ is a Dunford-Pettis $ p $-convergent operator.
\end{thm}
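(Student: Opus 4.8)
The plan is to run the ``$p$-Right version'' of the Bator--Bator--Diestel argument for $(L)$-sets quoted just above, using the evaluation-map dictionary of Lemma \ref{l1} together with the elementary observation that $B(K)=\ell_\infty(K)\cong\ell_1(K)^\ast$ is a dual space, hence $1$-complemented in its bidual.

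For the ``only if'' direction, assume every $p$-Right subset of $X^\ast$ is a $p$-Right$^\ast$ subset of $X^\ast$ and let $T:X\to Y$ be Dunford-Pettis $p$-convergent. By Lemma \ref{l1}(i) the set $T^\ast(B_{Y^\ast})$ is a $p$-Right subset of $X^\ast$, so by hypothesis it is a $p$-Right$^\ast$ subset of $X^\ast$. For an arbitrary $p$-Right null sequence $(x_n^{\ast\ast})_n$ in $X^{\ast\ast}$ I would then compute
$$\Vert T^{\ast\ast}(x_n^{\ast\ast})\Vert=\sup_{y^\ast\in B_{Y^\ast}}\vert\langle x_n^{\ast\ast},T^\ast(y^\ast)\rangle\vert=\sup_{x^\ast\in T^\ast(B_{Y^\ast})}\vert x_n^{\ast\ast}(x^\ast)\vert\longrightarrow 0,$$
the convergence being exactly the assertion that $T^\ast(B_{Y^\ast})$ is $p$-Right$^\ast$ in $X^\ast$, tested against the $p$-Right null sequence $(x_n^{\ast\ast})_n$ in $(X^\ast)^\ast$. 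Hence $T^{\ast\ast}$ is Dunford-Pettis $p$-convergent; this half is a direct computation.

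For the converse, assume $T^{\ast\ast}$ is Dunford-Pettis $p$-convergent whenever $T:X\to Y$ is, and let $K$ be a $p$-Right subset of $X^\ast$. By Lemma \ref{l1}(ii) the evaluation map $E_X:X\to B(K)$ is Dunford-Pettis $p$-convergent, so by hypothesis $(E_X)^{\ast\ast}:X^{\ast\ast}\to B(K)^{\ast\ast}$ is Dunford-Pettis $p$-convergent. Since $B(K)\cong\ell_1(K)^\ast$, the adjoint $Q:=(J_{\ell_1(K)})^\ast:B(K)^{\ast\ast}\to B(K)$ of the canonical embedding $J_{\ell_1(K)}:\ell_1(K)\to\ell_1(K)^{\ast\ast}$ is a norm-one projection. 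The key point is the identity $E=Q\circ(E_X)^{\ast\ast}$, where $E:X^{\ast\ast}\to B(K)$ is the evaluation map $E(x^{\ast\ast})(x^\ast)=\langle x^{\ast\ast},x^\ast\rangle$ for $x^\ast\in K$; this falls out of the relation $(E_X)^\ast\circ J_{\ell_1(K)}=S$, where $S:\ell_1(K)\to X^\ast$, $S(f)=\sum_{k\in K}f(k)k$, is the operator from the proof of Lemma \ref{l1}(vi) (so that $S^\ast=E$), by passing to adjoints. Granting this, $E$ is Dunford-Pettis $p$-convergent as a composition of $(E_X)^{\ast\ast}$ with the bounded operator $Q$, and Lemma \ref{l1}(v), applied with $X^\ast$ in the role of $X$, yields that $K$ is a $p$-Right$^\ast$ subset of $X^\ast$, completing the proof.

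I expect the only delicate step to be the factorization $E=Q\circ(E_X)^{\ast\ast}$. It requires carefully unwinding the several canonical embeddings and the duality $\ell_\infty(K)=\ell_1(K)^\ast$: one must check in particular that under this duality the point mass $\delta_k\in B(K)^\ast$ coincides with $J_{\ell_1(K)}(\delta_k)$, so that $(E_X)^\ast(\delta_k)=k$ and therefore $\big(Q\,(E_X)^{\ast\ast}(x^{\ast\ast})\big)(k)=\langle x^{\ast\ast},k\rangle$ for every $k\in K$. Once this identification is in place, everything else reduces to a routine application of Lemma \ref{l1}.
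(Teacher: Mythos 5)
Your proof is correct, and its skeleton is the paper's: the forward direction is exactly the paper's argument with Lemma \ref{l1}(iii) unwound into an explicit norm computation, and in the converse both you and the paper apply the hypothesis to the evaluation map $E_X\colon X\to B(K)$. Where you diverge is the last step of the converse. The paper finishes in one line: since $(E_X)^{\ast\ast}$ is Dunford--Pettis $p$-convergent, Lemma \ref{l1}(iii) applied to $E_X^{\ast}$ shows that $E_X^{\ast}(B_{B(K)^{\ast}})$ is a $p$-Right$^{\ast}$ set, and $K=E_X^{\ast}(\lbrace\delta_k : k\in K\rbrace)$ sits inside that image, so $K$ is $p$-Right$^{\ast}$ as a subset of a $p$-Right$^{\ast}$ set. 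You instead factor the evaluation map $E\colon X^{\ast\ast}\to B(K)$ as $Q\circ(E_X)^{\ast\ast}$ through the Dixmier projection $Q=(J_{\ell_1(K)})^{\ast}$ of $B(K)^{\ast\ast}$ onto $B(K)\cong\ell_1(K)^{\ast}$, and then invoke Lemma \ref{l1}(v) with $X^{\ast}$ in the role of $X$. The factorization you flag as delicate does check out: for $f\in\ell_1(K)$ and $x\in X$ one has $\langle (E_X)^{\ast}(J_{\ell_1(K)}f),x\rangle=\sum_{k}f(k)k(x)=\langle S(f),x\rangle$ with $S(f)=\sum_{k\in K}f(k)k$, so $(E_X)^{\ast}\circ J_{\ell_1(K)}=S$ and dualizing gives $E=S^{\ast}=Q\circ(E_X)^{\ast\ast}$. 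Your route costs the bookkeeping of the canonical embeddings but avoids the (easy, unstated) fact that subsets of $p$-Right$^{\ast}$ sets are $p$-Right$^{\ast}$; the paper's route is shorter. Both are valid.
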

\begin{proof}
Suppose that
$ T:X\rightarrow Y $ is a Dunford-Pettis $ p $-convergent operator.\ The part (i) of Lemma of \ref{l1}, yields that $ T^{\ast}(B_{Y^{\ast}} ) $ is a
$ p $-Right set.\ By the hypothesis $ T^{\ast}(B_{Y^{\ast}} ) $ is a $ p $-Right$ ^{\ast} $ set.\ By applying the Lemma \ref{l1} $ \rm{(iii)}, $ we see that $ T^{\ast\ast} $ is a Dunford-Pettis $p$-convergent operator.\
Conversely, suppose that
$ K $ is a $ p $-Right subset of $ X^{\ast} .$\ The part (ii) of Lemma \ref{l1}, implies that $ E_{X} $ is Dunford-Pettis $ p $-convergent.\ Therefore, by the hypothesis, $ E_{X}^{\ast\ast} $ is Dunford-Pettis $ p$-convergent.\ Hence, if the unit ball of $ B(K)^{\ast} $ denoted by $ S, $ then
$ E^{\ast}_{X}(S) $ is a $ p $-Right$ ^{\ast} $ set.\ Since $K \subset E_{X}^{\ast}(S), $
$ K $ is a $ p $-Right$ ^{\ast} $ set in $ X^{\ast}. $\
\end{proof}
\begin{cor}\label{c1}
Every Right subset of $ X^{\ast} $ is a Right$ ^{\ast} $ set in $ X^{\ast} $ if and only if
$ T^{\ast\ast} $ is Dunford-Pettis completely continuous whenever $ Y $ is an arbitrary Banach space and
$ T : X \rightarrow Y $ is a pseudo weakly
compact operator.
\end{cor}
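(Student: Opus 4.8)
The plan is to obtain this as the case $ p=\infty $ of Theorem \ref{t1}, after matching up the terminology. First I would record the dictionary. For $ p=\infty $ a weakly $ p $-summable sequence is simply a weakly null sequence, so an $ \infty $-Right null sequence (in the sense fixed at the beginning of Section 3, namely a weakly $ p $-summable sequence that is a Dunford-Pettis set) is exactly a Right-null sequence in the sense of Peralta et al., i.e.\ a sequence that is simultaneously Dunford-Pettis and weakly null. In particular, an $ \infty $-Right subset of $ X^{\ast} $ is precisely a Right set and an $ \infty $-Right$ ^{\ast} $ subset of $ X^{\ast} $ is precisely a Right$ ^{\ast} $ set, as already observed just after Definition \ref{d1}.

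Next I would note that the operator classes appearing in the statement all collapse to one and the same class when $ p=\infty $. An operator $ T $ is Dunford-Pettis $ \infty $-convergent if and only if it sends Dunford-Pettis weakly null sequences to norm null sequences, which is by definition what it means for $ T $ to be Dunford-Pettis completely continuous; and, since the Right topology is set up precisely so that Right-to-norm sequential continuity of a linear map is, by linearity, equivalent to sending Right-null sequences to norm null sequences, this is also exactly what it means for $ T $ to be pseudo weakly compact (using the characterization of Right-null sequences recalled at the start of Section 3 from \cite{ce1, g8}). Hence $ DPC_{\infty}(X,Y)=DPcc(X,Y) $, and both coincide with the class of pseudo weakly compact operators from $ X $ into $ Y $.

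Granting these identifications, the equivalence in Theorem \ref{t1} specialized to $ p=\infty $ reads verbatim: every Right subset of $ X^{\ast} $ is a Right$ ^{\ast} $ set in $ X^{\ast} $ if and only if $ T^{\ast\ast} $ is Dunford-Pettis completely continuous whenever $ Y $ is an arbitrary Banach space and $ T:X\rightarrow Y $ is pseudo weakly compact, which is exactly the corollary. So the proof is simply to invoke Theorem \ref{t1} with $ p=\infty $. The only point requiring any care — though not any real work — is the coincidence of the three operator classes noted above; I do not anticipate any genuine obstacle beyond this bookkeeping of equivalences.
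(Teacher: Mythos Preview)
Your proposal is correct and matches the paper's approach: the corollary is stated immediately after Theorem \ref{t1} with no separate proof, so the paper treats it as the direct specialization $p=\infty$ of that theorem, exactly as you do. Your identification of pseudo weakly compact operators with Dunford-Pettis completely continuous operators (and hence with $DPC_{\infty}$) is the only bookkeeping needed, and it is accurate.
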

Recall from \cite{AlbKal}, that the space of all finite regular Borel signed measures on the
compact space K is denoted by M(K). It is well known that $ M(K) = C(K)^{\ast}. $
\begin{cor}\label{c2}
If $ K $ is a compact Hausdorff space, then every $ p $-Right subset
of $ M(K)$ is a $ p $-Right$ ^{\ast} $ set in $ M(K) .$\
\end{cor}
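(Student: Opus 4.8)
The plan is to invoke Theorem \ref{t1} with $X=C(K)$, so that $X^{\ast}=M(K)$; it then suffices to prove that $T^{\ast\ast}$ is Dunford-Pettis $p$-convergent for every Banach space $Y$ and every Dunford-Pettis $p$-convergent operator $T:C(K)\to Y$.

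The first step is to show that such a $T$ is weakly compact. The unit vector basis $(e_{n})_{n}$ of $c_{0}$ is weakly $p$-summable, and it is a Dunford-Pettis set because $\ell_{1}$ has the Schur property; consequently the image of $(e_{n})_{n}$ under any embedding $c_{0}\hookrightarrow C(K)$ is a Dunford-Pettis weakly $p$-summable sequence, which $T$ therefore maps to a norm-null sequence. Hence $T$ cannot be an isomorphism on any subspace isomorphic to $c_{0}$, i.e. $T$ is unconditionally converging, and by Pelczynski's property $(V)$ of $C(K)$ it follows that $T$ is weakly compact.

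The second step passes to the bidual. By Gantmacher's theorem $T^{\ast\ast}:C(K)^{\ast\ast}\to Y^{\ast\ast}$ is weakly compact. Now the bidual of a $C(K)$-space is isometrically isomorphic to a $C(\widehat{K})$-space for some compact Hausdorff $\widehat{K}$ (it is the dual of the abstract $L$-space $M(K)$, hence, by Kakutani's representation theorem, an abstract $M$-space with unit), and $C(\widehat{K})$ has the Dunford-Pettis property; therefore the weakly compact operator $T^{\ast\ast}$ is completely continuous. Since every Dunford-Pettis weakly $p$-summable sequence is weakly null, $T^{\ast\ast}$ is in particular Dunford-Pettis $p$-convergent, and Theorem \ref{t1} yields the conclusion.

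The part I expect to require the most care is the first step: recognizing that the property of being Dunford-Pettis $p$-convergent still forbids fixing a copy of $c_{0}$, so that property $(V)$ of $C(K)$ applies and forces weak compactness. Once this is in place, the bidual step is essentially bookkeeping, combining the identification $C(K)^{\ast\ast}\cong C(\widehat{K})$ with the Dunford-Pettis property and Gantmacher's theorem.
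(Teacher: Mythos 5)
Your proposal is correct and follows the same skeleton as the paper's proof: reduce via Theorem \ref{t1} to showing that $T^{\ast\ast}$ is Dunford-Pettis $p$-convergent for every Dunford-Pettis $p$-convergent $T:C(K)\to Y$, establish that $T$ is weakly compact, and then pass to the bidual. The difference lies in how the two middle steps are justified. For weak compactness of $T$, the paper simply cites the known fact that $C(K)$ has the $p$-sequentially Right property (so that $T^{\ast}(B_{Y^{\ast}})$, being a $p$-Right set, is relatively weakly compact), whereas you give a self-contained argument: the unit vector basis of $c_{0}$ is a Dunford-Pettis weakly $p$-summable sequence, so $T$ cannot fix a copy of $c_{0}$, hence is unconditionally converging, and Pelczy\'nski's property $(V)$ of $C(K)$ forces weak compactness. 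This is correct and arguably more transparent, since it exposes exactly which classical property of $C(K)$ is being used. For the final step you invoke $C(K)^{\ast\ast}\cong C(\widehat{K})$ and the Dunford--Pettis property of the bidual to conclude that the weakly compact operator $T^{\ast\ast}$ is completely continuous; this works, but it is heavier than necessary: every weakly compact operator (on any Banach space) already takes Dunford-Pettis weakly null sequences to norm null sequences, i.e.\ is Dunford-Pettis completely continuous, which is the fact the paper uses implicitly (and explicitly in Corollary \ref{c9}). So your detour through Kakutani's representation and the Dunford--Pettis property of $C(\widehat{K})$ can be replaced by this one-line observation, but it is not wrong.
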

\begin{proof} Suppose that $ K $ is a compact Hausdorff space, $ Y $ is a Banach space and $ T : C(K) \rightarrow Y $ is a Dunford-Pettis $ p $-convergent operator.\ Since $ C(K) $ has the $ p $-sequentially Right property, $ T $ is weakly compact and so, $ T^{\ast\ast} $ is weakly compact.\ Therefore, $ T^{\ast\ast} $ is Dunford-Pettis $ p $-convergent.\ Hence, Theorem \ref{t1} implies that, every $ p $-Right subset of $ M(K)$ is a $ p $-Right$ ^{\ast} $ set in $ M(K).$
\end{proof}

If $ \mathcal{M} $ is a closed subspace of $ \mathcal{U}(X, Y ), $ then for arbitrary
elements $ x \in X $ and $ y^{\ast} \in Y^{\ast} ,$ the evaluation operators $ \phi_{x} :\mathcal{M}\rightarrow Y$ and
$ \psi_{y^{\ast}} $ on $ \mathcal{M} $ are defined by $\phi_{x}(T) = T(x), $ $ \psi_{y^{\ast}}(T)=T^{\ast}(y^{\ast}) .$ Also,
the point evaluation sets related to $ x \in X $ and $ y^{\ast} \in Y^{\ast} $ are the images of the
closed unit ball $ B_{\mathcal{M}} $ of $ \mathcal{M} ,$ under the evaluation operators $ \phi_{x} $ and $ \psi_{y^{\ast}}$ and they
are denoted by $ \mathcal{M}_{1}(x) $ and $\widetilde{\mathcal{M}} _{1}(y^{\ast}) $ respectively \cite{MZ}.\ Note that, if we speak about $ \mathcal{U} (X,Y)$ or its linear subspace $\mathcal{M}, $ then the related norm is the ideal norm $ \mathcal{A}(.) $ while, the operator norm $ \Vert .\Vert $ is applied when the space is a linear subspace of $L(X,Y) . $\
\begin{thm}\label{t2} Suppose that $ 1\leq p\leq \infty $ and the dual $ \mathcal{M^{\ast}} $ of a
closed subspace $ \mathcal{M}\subseteq \mathcal{U}(X, Y ) $ has the $ p $-$ (DPrcP) .$\ Then all of the point
evaluations $ \mathcal{M}_{1}(x) $ and $\widetilde{\mathcal{M}} _{1}(y^{\ast})$ are $ p $-Right sets.
\end{thm}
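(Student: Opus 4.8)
The plan is to realise each point-evaluation set as the image of the unit ball $B_{\mathcal{M}}$ under an evaluation operator, to push a given $p$-Right null sequence through the corresponding adjoint so that it becomes a $p$-Right null sequence inside $\mathcal{M}^{\ast}$, and then to invoke the definition of the $p$-$(DPrcP)$ of $\mathcal{M}^{\ast}$. The starting observation is that, since the ideal norm $\mathcal{A}(\cdot)$ dominates the operator norm on $\mathcal{M}$, the evaluation maps $\phi_{x}\colon\mathcal{M}\to Y$ and $\psi_{y^{\ast}}\colon\mathcal{M}\to X^{\ast}$ are bounded linear operators (with $\Vert\phi_{x}\Vert\le\Vert x\Vert$, $\Vert\psi_{y^{\ast}}\Vert\le\Vert y^{\ast}\Vert$); in particular their adjoints and biadjoints exist and, being bounded, are weak-to-weak continuous, and $\mathcal{M}_{1}(x)$, $\widetilde{\mathcal{M}}_{1}(y^{\ast})$ are bounded sets.

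For $\mathcal{M}_{1}(x)=\phi_{x}(B_{\mathcal{M}})$, fix a $p$-Right null sequence $(y^{\ast}_{n})_{n}$ in $Y^{\ast}$. The first step is the norming identity
$$\sup_{y\in\mathcal{M}_{1}(x)}\vert\langle y,y^{\ast}_{n}\rangle\vert=\sup_{T\in B_{\mathcal{M}}}\vert\langle T(x),y^{\ast}_{n}\rangle\vert=\sup_{T\in B_{\mathcal{M}}}\vert\langle T,\phi_{x}^{\ast}(y^{\ast}_{n})\rangle\vert=\Vert\phi_{x}^{\ast}(y^{\ast}_{n})\Vert_{\mathcal{M}^{\ast}},$$
so the claim reduces to $\Vert\phi_{x}^{\ast}(y^{\ast}_{n})\Vert_{\mathcal{M}^{\ast}}\to 0$. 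The second step is to check that $(\phi_{x}^{\ast}(y^{\ast}_{n}))_{n}$ is again $p$-Right null, now in $\mathcal{M}^{\ast}$: weak $p$-summability is preserved because $\phi_{x}^{\ast}$ is bounded, and $\{\phi_{x}^{\ast}(y^{\ast}_{n}):n\}$ is a Dunford-Pettis set in $\mathcal{M}^{\ast}$ because, for any weakly null $(\mu^{\ast\ast}_{m})_{m}$ in $\mathcal{M}^{\ast\ast}$, one has $\langle\mu^{\ast\ast}_{m},\phi_{x}^{\ast}(y^{\ast}_{n})\rangle=\langle\phi_{x}^{\ast\ast}(\mu^{\ast\ast}_{m}),y^{\ast}_{n}\rangle$ with $(\phi_{x}^{\ast\ast}(\mu^{\ast\ast}_{m}))_{m}$ weakly null in $Y^{\ast\ast}$, so the supremum over $n$ tends to $0$ by the Dunford-Pettis property of $\{y^{\ast}_{n}:n\}$. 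Since $\mathcal{M}^{\ast}$ has the $p$-$(DPrcP)$, this sequence is norm null, which yields $\Vert\phi_{x}^{\ast}(y^{\ast}_{n})\Vert_{\mathcal{M}^{\ast}}\to 0$; hence $\mathcal{M}_{1}(x)$ is a $p$-Right set.

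The argument for $\widetilde{\mathcal{M}}_{1}(y^{\ast})=\psi_{y^{\ast}}(B_{\mathcal{M}})\subseteq X^{\ast}$ is symmetric: introduce the bounded operator $R\colon X\to\mathcal{M}^{\ast}$ given by $R(x)(T)=\langle y^{\ast},T(x)\rangle=\langle T^{\ast}(y^{\ast}),x\rangle$ (so $R=\psi_{y^{\ast}}^{\ast}\circ J_{X}$), and for a $p$-Right null sequence $(x_{n})_{n}$ in $X$ note
$$\sup_{x^{\ast}\in\widetilde{\mathcal{M}}_{1}(y^{\ast})}\vert x^{\ast}(x_{n})\vert=\sup_{T\in B_{\mathcal{M}}}\vert\langle T^{\ast}(y^{\ast}),x_{n}\rangle\vert=\Vert R(x_{n})\Vert_{\mathcal{M}^{\ast}}.$$
Exactly as above, $(R(x_{n}))_{n}$ is $p$-Right null in $\mathcal{M}^{\ast}$ — weak $p$-summability survives because $R$ is bounded, and $\{R(x_{n}):n\}$ is a Dunford-Pettis set because $R^{\ast}\colon\mathcal{M}^{\ast\ast}\to X^{\ast}$ is weak-to-weak continuous and $\{x_{n}:n\}$ is a Dunford-Pettis set in $X$ — so the $p$-$(DPrcP)$ of $\mathcal{M}^{\ast}$ forces $\Vert R(x_{n})\Vert_{\mathcal{M}^{\ast}}\to 0$, i.e. $\widetilde{\mathcal{M}}_{1}(y^{\ast})$ is a $p$-Right set. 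Everything runs uniformly in $1\le p\le\infty$, reading ``weakly null'' for ``weakly $p$-summable'' when $p=\infty$.

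The norming identities and the preservation of weak $p$-summability by bounded operators are routine; the one point that needs attention is the stability of the Dunford-Pettis set property under $\phi_{x}^{\ast}$ and $R$, i.e. the observation that the biadjoint $\phi_{x}^{\ast\ast}$ and the adjoint $R^{\ast}$ send weakly null sequences to weakly null sequences so that a $p$-Right null sequence downstairs genuinely lifts to a $p$-Right null sequence inside $\mathcal{M}^{\ast}$. I expect that to be the only mildly delicate step; the remainder is bookkeeping with duality brackets. (Equivalently, one may phrase the proof through Lemma \ref{l1}: the $p$-$(DPrcP)$ of $\mathcal{M}^{\ast}$ makes $B_{\mathcal{M}}$ a $p$-Right$^{\ast}$ set by Lemma \ref{l1}\,(iv), and the image of a $p$-Right$^{\ast}$ set under a bounded operator is again a $p$-Right$^{\ast}$ set, hence a $p$-Right set.)
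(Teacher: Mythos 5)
Your proof is correct and follows essentially the same route as the paper: the paper likewise reduces the claim to the norming identity $\sup_{T\in B_{\mathcal{M}}}\vert y_{n}^{\ast}(T(x))\vert=\Vert\phi_{x}^{\ast}(y_{n}^{\ast})\Vert$ and then invokes the fact that the $p$-$(DPrcP)$ of $\mathcal{M}^{\ast}$ forces $\phi_{x}^{\ast}$ (and symmetrically the operator you call $R$) to be Dunford--Pettis $p$-convergent. You merely make explicit the step the paper leaves implicit, namely that bounded operators carry Dunford--Pettis weakly $p$-summable sequences to Dunford--Pettis weakly $p$-summable sequences, which is exactly the right justification.
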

\begin{proof}
Since $ \mathcal{M^{\ast}} $ has the $ p $-$ (DPrcP),$ $ \phi_{x} ^{\ast}$
is a Dunford-Pettis $ p $-convergent operator.\ Now, suppose that $ (y_{n}^{\ast})_{n} $ is a
$ p $-Right null sequence in $ Y^{\ast}. $\ It is clear that $ \displaystyle\lim_{n\rightarrow\infty} \Vert \phi_{x}^{\ast}(y_{n}^{\ast})) \Vert =0,$ for all $ x\in X. $\ On the other hand,
$$ \Vert \phi_{x}^{\ast}(y_{n}^{\ast})) \Vert=\sup\lbrace \vert \phi_{x}^{\ast}y_{n}^{\ast} (T ) \vert : T\in B_{\mathcal{M}} \rbrace=\sup\lbrace \vert y_{n}^{\ast} (T(x)) \vert : T\in B_{\mathcal{M}} \rbrace .$$\
This shows that $ \mathcal{M}_{1}(x) $ is a $ p $-Right set in $ Y, $ for all $ x\in X. $\ A similar proof shows
that $\widetilde{\mathcal{M}} _{1}(y^{\ast})$ is a a $ p $-Right set in $ X^{\ast}. $
\end{proof}

In the following, we obtain some sufficient conditions for which the point evaluations $ \mathcal{M}_{1}(x) $ and $\widetilde{\mathcal{M}} _{1}(y^{\ast})$ are relatively weakly compact for all $ x \in X $ and all $ y^{\ast} \in Y^{\ast}. $\
\begin{thm}\label{t3}
Suppose that $ 1\leq p\leq \infty $ and $ X^{\ast\ast} $ and $ Y^{\ast} $ have the $ p $-$ (SR)$ property.\ If $\mathcal{M}\subseteq \mathcal{U} (X,Y) $ is a closed subspace so that the natural restriction operator $ R: \mathcal{U} (X,Y)^{\ast}\rightarrow \mathcal{M^{\ast}} $
is a Dunford-Pettis $ p $-convergent operator, then all of the point evaluations $ \mathcal{M}_{1}(x) $ and $\widetilde{\mathcal{M}} _{1}(y^{\ast})$ are relatively weakly compact.\
\end{thm}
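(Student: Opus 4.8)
The plan is to prove that every evaluation operator $\phi_{x}$ and $\psi_{y^{\ast}}$ is weakly compact; since $\mathcal{M}_{1}(x)=\phi_{x}(B_{\mathcal{M}})$ and $\widetilde{\mathcal{M}}_{1}(y^{\ast})=\psi_{y^{\ast}}(B_{\mathcal{M}})$, this immediately gives the two assertions. Let $\iota:\mathcal{M}\hookrightarrow\mathcal{U}(X,Y)$ denote the inclusion, so that $\iota^{\ast}=R$, and introduce the bounded operators $\widetilde{\phi}_{x}:\mathcal{U}(X,Y)\to Y$ and $\widetilde{\psi}_{y^{\ast}}:\mathcal{U}(X,Y)\to X^{\ast}$ given by $\widetilde{\phi}_{x}(T)=T(x)$ and $\widetilde{\psi}_{y^{\ast}}(T)=T^{\ast}(y^{\ast})$ (these are bounded since $\Vert T\Vert\le\mathcal{A}(T)$). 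Then $\phi_{x}=\widetilde{\phi}_{x}\circ\iota$ and $\psi_{y^{\ast}}=\widetilde{\psi}_{y^{\ast}}\circ\iota$, and taking adjoints gives $\phi_{x}^{\ast}=R\circ\widetilde{\phi}_{x}^{\ast}:Y^{\ast}\to\mathcal{M}^{\ast}$ and $\psi_{y^{\ast}}^{\ast}=R\circ\widetilde{\psi}_{y^{\ast}}^{\ast}:X^{\ast\ast}\to\mathcal{M}^{\ast}$.

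Next I would observe that the class of Dunford-Pettis $p$-convergent operators is stable under composition on the right with an arbitrary bounded operator: if $S:Z\to W$ is bounded and $R:W\to V$ is Dunford-Pettis $p$-convergent, then $R\circ S$ is Dunford-Pettis $p$-convergent. Indeed, for a $p$-Right null sequence $(z_{n})_{n}$ in $Z$, the sequence $(Sz_{n})_{n}$ is still weakly $p$-summable, and since a bounded operator maps Dunford-Pettis sets to Dunford-Pettis sets, $\{Sz_{n}:n\}$ is a Dunford-Pettis set; hence $(Sz_{n})_{n}$ is $p$-Right null in $W$, so $\Vert R(Sz_{n})\Vert\to 0$. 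Applying this with $S=\widetilde{\phi}_{x}^{\ast}$ and with $S=\widetilde{\psi}_{y^{\ast}}^{\ast}$, and using that $R$ is Dunford-Pettis $p$-convergent by hypothesis, I conclude that $\phi_{x}^{\ast}:Y^{\ast}\to\mathcal{M}^{\ast}$ and $\psi_{y^{\ast}}^{\ast}:X^{\ast\ast}\to\mathcal{M}^{\ast}$ are Dunford-Pettis $p$-convergent.

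Finally I would pass to biduals. Applying Lemma~\ref{l1}(i) to the Dunford-Pettis $p$-convergent operator $\phi_{x}^{\ast}:Y^{\ast}\to\mathcal{M}^{\ast}$ shows that $\phi_{x}^{\ast\ast}(B_{\mathcal{M}^{\ast\ast}})$ is a $p$-Right subset of $(Y^{\ast})^{\ast}=Y^{\ast\ast}$. Since $Y^{\ast}$ has the $p$-$(SR)$ property this set is relatively weakly compact, i.e. $\phi_{x}^{\ast\ast}$ is weakly compact; as an operator is weakly compact precisely when its bidual is, $\phi_{x}$ is weakly compact, and hence $\mathcal{M}_{1}(x)$ is relatively weakly compact in $Y$. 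Analogously, Lemma~\ref{l1}(i) applied to $\psi_{y^{\ast}}^{\ast}:X^{\ast\ast}\to\mathcal{M}^{\ast}$ shows that $\psi_{y^{\ast}}^{\ast\ast}(B_{\mathcal{M}^{\ast\ast}})$ is a $p$-Right subset of $(X^{\ast\ast})^{\ast}=X^{\ast\ast\ast}$, which is relatively weakly compact because $X^{\ast\ast}$ has the $p$-$(SR)$ property; therefore $\psi_{y^{\ast}}^{\ast\ast}$, and with it $\psi_{y^{\ast}}$, is weakly compact, so $\widetilde{\mathcal{M}}_{1}(y^{\ast})$ is relatively weakly compact in $X^{\ast}$.

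The step I expect to be the main obstacle is the dual bookkeeping in the last paragraph: one has to treat $\phi_{x}^{\ast}$ and $\psi_{y^{\ast}}^{\ast}$ as operators in their own right, so that Lemma~\ref{l1}(i) produces $p$-Right sets sitting in $Y^{\ast\ast}=(Y^{\ast})^{\ast}$ and in $X^{\ast\ast\ast}=(X^{\ast\ast})^{\ast}$ respectively, which are exactly the dual spaces to which the assumptions that $Y^{\ast}$ and $X^{\ast\ast}$ have the $p$-$(SR)$ property can be applied. The only other point needing a short verification is the composition stability used in the second paragraph, which rests on the elementary facts that bounded operators preserve weak $p$-summability and carry Dunford-Pettis sets onto Dunford-Pettis sets; granting those, the argument is just a routine combination of Lemma~\ref{l1} with Gantmacher's theorem.
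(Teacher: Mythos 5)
Your proof is correct and follows essentially the same route as the paper: factor $\phi_{x}^{\ast}$ and $\psi_{y^{\ast}}^{\ast}$ through the restriction operator $R$, note that precomposing a Dunford--Pettis $p$-convergent operator with a bounded operator preserves Dunford--Pettis $p$-convergence, and then invoke the $p$-$(SR)$ hypotheses on $Y^{\ast}$ and $X^{\ast\ast}$ together with Gantmacher's theorem. The only cosmetic difference is that the paper realizes the factors $\widetilde{\phi}_{x}^{\ast}$ and $\widetilde{\psi}_{y^{\ast}}^{\ast}$ as compositions $\psi\circ U_{x}$ through the trace-duality map on $X^{\ast\ast}\widehat{\bigotimes}_{\pi}Y^{\ast}$, which are the same operators you obtain directly as adjoints of the global evaluation maps.
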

\begin{proof}
It is enough to show
that $ \phi_{x} $ and $ \psi_{y^{\ast}} $ are weakly
compact operators.\ For this purpose suppose that $ T\in \mathcal{U} (X,Y).$\ Since $ \Vert T \Vert\leq \mathcal{A}(T), $ it is not difficult to show that,
the operator $ \psi:X^{\ast\ast}\widehat{\bigotimes}_{\pi}Y^{\ast}\rightarrow \mathcal{U} (X,Y)^{\ast} $ which is defined by
$$ \vartheta\mapsto tr(T^{\ast\ast}\vartheta) =\displaystyle \sum_{n=1}^{\infty}\langle T^{\ast\ast} x^{\ast\ast} _{n}, y_{n}^{\ast} \rangle$$ is linear and continuous, where $\vartheta=\displaystyle \sum_{n=1}^{\infty} x_{n} ^{\ast\ast}\bigotimes y_{n}^{\ast}.$\ Fix now an arbitrary element $ x\in X $ we define $ U_{x}:Y^{\ast}\rightarrow X^{\ast\ast}\widehat{\bigotimes}_{\pi}Y^{\ast}$ by $ U_{x}(y^{\ast})=x\bigotimes y^{\ast}. $\
It is clear that the operator $ \phi_{x}^{\ast}= R\circ \psi \circ U_{x}$ is a Dunford-Pettis
$ p$-convergent.\
Since $ Y^{\ast}$ has the $ p $-$ (SR) $ property, we conclude that
$ \phi_{x}^{\ast}$ is a weakly
compact operator.\ Hence, $ \phi_{x} $ is weakly
compact.\ Similarly, we can see that $ \psi_{y^{\ast}} $ is weakly
compact.\
\end{proof}

\begin{thm}\label{t4}
Let $ X $ be a Banach space and $ 1\leq p < q\leq \infty. $\ The following statements are equivalent:\\
$\rm{(i)}$ For every Banach space $ Y, $ if $ T : X\rightarrow Y $ is a Dunford-Pettis $ p$-convergent operator, then $ T$ has a weakly $ q $-precompact (weakly $ q $-compact, $ q $-compact) adjoint,\\
$\rm{(ii)}$ Same as $\rm{(i)}$ with $ Y=\ell_{\infty} ,$\\
$\rm{(iii)}$ Every $ p$-Right subset of $ X^{\ast} $ is weakly $ q $-precompact (relatively weakly $q $-compact, $ q $-compact).
\end{thm}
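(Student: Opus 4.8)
The plan is to establish the cycle $\rm{(i)} \Rightarrow \rm{(ii)} \Rightarrow \rm{(iii)} \Rightarrow \rm{(i)}$, carrying the three parenthetical variants along simultaneously, since passing from one to another only amounts to replacing ``admits a weakly $q$-Cauchy subsequence'' by ``admits a weakly $q$-convergent subsequence with limit in $X^{\ast}$'' (resp. ``a $q$-convergent subsequence''). The implication $\rm{(i)} \Rightarrow \rm{(ii)}$ is immediate, specializing $Y = \ell_{\infty}$.

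For $\rm{(ii)} \Rightarrow \rm{(iii)}$, let $K$ be a $p$-Right subset of $X^{\ast}$ and fix an arbitrary sequence $(x^{\ast}_{n})_{n}$ in $K$. The idea is to define $T : X \rightarrow \ell_{\infty}$ by $T(x) = (x^{\ast}_{n}(x))_{n}$, which is a bounded operator because $K$ is bounded. The point is that $T$ is Dunford-Pettis $p$-convergent: for any $p$-Right null sequence $(x_{m})_{m}$ in $X$ one has $\Vert T(x_{m}) \Vert = \sup_{n} \vert x^{\ast}_{n}(x_{m}) \vert \leq \sup_{x^{\ast} \in K} \vert x^{\ast}(x_{m}) \vert \to 0$ since $K$ is a $p$-Right set. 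Hence, by $\rm{(ii)}$, $T^{\ast}(B_{\ell_{\infty}^{\ast}})$ is weakly $q$-precompact (resp. relatively weakly $q$-compact, $q$-compact). Then I would check that $T^{\ast}$ maps the $n$-th coordinate functional $e^{1}_{n}$ of $\ell_{\infty}$ — which lies in $B_{\ell_{\infty}^{\ast}}$ through the canonical isometric copy of $\ell_{1}$ in $\ell_{\infty}^{\ast}$ — to $x^{\ast}_{n}$, since $\langle T^{\ast}e^{1}_{n}, x\rangle = \langle e^{1}_{n}, T(x)\rangle = x^{\ast}_{n}(x)$ for all $x \in X$. Thus $(x^{\ast}_{n})_{n} \subseteq T^{\ast}(B_{\ell_{\infty}^{\ast}})$, so it has a weakly $q$-Cauchy (resp. weakly $q$-convergent with limit in $X^{\ast}$, $q$-convergent) subsequence; as $(x^{\ast}_{n})_{n}$ was arbitrary in $K$, this gives that $K$ is weakly $q$-precompact (resp. relatively weakly $q$-compact, $q$-compact).

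For $\rm{(iii)} \Rightarrow \rm{(i)}$, let $Y$ be any Banach space and $T : X \rightarrow Y$ a Dunford-Pettis $p$-convergent operator. By Lemma \ref{l1}$\rm{(i)}$, $T^{\ast}(B_{Y^{\ast}})$ is a $p$-Right subset of $X^{\ast}$, so $\rm{(iii)}$ yields that $T^{\ast}(B_{Y^{\ast}})$ is weakly $q$-precompact (resp. relatively weakly $q$-compact, $q$-compact), which is by definition the statement that $T^{\ast}$ is a weakly $q$-precompact (resp. weakly $q$-compact, $q$-compact) operator.

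The step I expect to be the main obstacle — though it is bookkeeping rather than anything deep — is $\rm{(ii)} \Rightarrow \rm{(iii)}$: one must first reduce an arbitrary $p$-Right set to a single sequence so that the fixed codomain $\ell_{\infty}$ (rather than a general $B(K)$) is enough, and then verify the identification $T^{\ast}e^{1}_{n} = x^{\ast}_{n}$ that lets one recover the prescribed sequence inside $T^{\ast}(B_{\ell_{\infty}^{\ast}})$; once that is in place, the three conclusions follow at once from the sequential definitions of the corresponding compactness properties and the fact that each of them passes to subsequences.
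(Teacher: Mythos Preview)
Your proposal is correct and follows essentially the same route as the paper: the same cycle $\rm{(i)}\Rightarrow\rm{(ii)}\Rightarrow\rm{(iii)}\Rightarrow\rm{(i)}$, the same operator $T:X\to\ell_{\infty}$, $T(x)=(x^{\ast}_{n}(x))_{n}$, the same identification $T^{\ast}e^{1}_{n}=x^{\ast}_{n}$, and the same appeal to Lemma~\ref{l1}\,$\rm{(i)}$ for the final implication. The paper only writes out the relatively weakly $q$-compact case and declares the others analogous, whereas you carry all three variants in parallel; otherwise the arguments coincide.
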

\begin{proof}
We will show that in the relatively weakly
$ q $-compact case.\ The other
proof is similar.\\
(i) $ \Rightarrow $ (ii) is obvious.\\
(ii) $ \Rightarrow$ (iii) Let $ K$ be a $ p$-Right subset of $ X^{\ast} $ and let $ (x^{\ast}_{n})_{n} $
be a sequence
in $ K. $\ Define $ T:X\rightarrow \ell_{\infty} $ by $ T(x)=(x^{\ast}_{n}(x)) .$\
Let $ (x_{n})_{n} $ be a $ p $-Right null sequence in $ X. $\ Since $ K $ is a $ p$-Right set,
$$\lim_{n\rightarrow \infty}\Vert T(x_{n})\Vert=\lim_{n\rightarrow \infty}\sup_{m} \vert x^{\ast}_{m}(x_{n})\vert=0. $$
Therefore, $ T $ is Dunford-Pettis $ p$-convergent.\ Hence, by the hypothesis, $ T^{\ast} $ is weakly $ q $-compact and so,
$ (T^{\ast}(e^{1}_{n}))_{n}=(x^{\ast}_{n})_{n}$
has a weakly $ q$-convergent subsequence.\\
(iii) $ \Rightarrow $ (i) Let $ T:X\rightarrow Y $ be a Dunford-Pettis $ p $-convergent operator.\ Then $ T^{\ast}(B_{Y^{\ast}}) $
is a $ p$-Right subset of $ X. $\ Therefore $ T^{\ast}(B_{Y^{\ast}}) $ is relatively weakly $ q $-compact,
and thus $ T^{\ast}$ is weakly $ q $-compact.\
\end{proof}
Let $ A $ and $ B $ be nonempty subsets of a Banach space $ X, $ we define ordinary distance and non-symmetrized Hausdorff distance respectively, by
\begin{center}
$ d(A,B)=\inf\lbrace d(a,b):a\in A, b\in B \rbrace, \hspace{.9 cm} \hat{d}(A,B) =\sup\lbrace d(a,B): a \in A\rbrace.$\end{center}
Let $ X $ be a Banach space and $ K $ be a bounded subset of $ X^{\ast}. $\ For $ 1\leq p \leq \infty, $
we set
\begin{center}
$ \zeta_{p}(K)=\inf\lbrace \hat{d}(A,K) : K\subset X^{\ast}$ is a $ p $-Right set $ \rbrace. $
\end{center}
We can conclude that $ \zeta_{p}(K)=0 $ if and only if $K\subset X^{\ast}$ is a $ p $-Right set.\ Now,
let $ K$ be a bounded subset of a Banach space $ X. $\ The de Blasi measure
of weak non-compactness of $ K$ is defined by
\begin{center}
$\omega(K) =\inf\lbrace \hat{d}(K,A):\emptyset \neq A\subset X $ is weakly-compact $ \rbrace. $
\end{center}
It is clear that
$ \omega(K) =0$ if and only if $ K $ is relatively weakly compact.\
For a bounded linear operator
$ T : X \rightarrow Y , $ we denote $ \zeta_{p} (T(B_{X})), \omega(T(B_{X}) $ by $ \zeta_{p} (T), \omega(T)$ respectively.\
\begin{cor}\label{c3}
Let $ X $ be a Banach space and $ 1\leq p \leq \infty. $\ The following statements are equivalent:\\
$ \rm{(i)} $ $ X $ has the $ p$-$ (SR)$ property.\\
$ \rm{(ii)} $ For each Banach space $ Y, $ adjoint every Dunford-Pettis $ p $-convergent $ T:X\rightarrow Y $
is weakly compact.\\
$ \rm{(iii)} $ Same as $ \rm{(ii)} $ with $ Y=\ell_{\infty}. $\\
$ \rm{(iv)} $ $ \omega (T^{\ast}) \leq \zeta_{p}(T^{\ast})$ for every bounded linear operator $ T$ from $ X$ into any Banach space $ Y. $\\
$ \rm{(v)} $ $ \omega (K)\leq \zeta_{p} (K)$ for every bounded subset $ K $ of $ X^{\ast} .$
\end{cor}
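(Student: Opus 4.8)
The plan is to close the logical loop $\mathrm{(i)}\Rightarrow\mathrm{(v)}\Rightarrow\mathrm{(iv)}\Rightarrow\mathrm{(ii)}\Rightarrow\mathrm{(iii)}\Rightarrow\mathrm{(i)}$, using only Lemma~\ref{l1}(i), the elementary facts recorded just before the statement (that $\zeta_{p}(K)=0$ iff $K$ is a $p$-Right set, and $\omega(K)=0$ iff $K$ is relatively weakly compact), and --- for the last arrow --- the $\ell_{\infty}$-factorisation already used in Theorem~\ref{t4}. This single cycle is self-contained and works uniformly for $1\le p\le\infty$.

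For $\mathrm{(i)}\Rightarrow\mathrm{(v)}$: fix a bounded $K\subseteq X^{\ast}$ and $\varepsilon>0$, and pick a $p$-Right set $A\subseteq X^{\ast}$ with $\hat{d}(K,A)<\zeta_{p}(K)+\varepsilon$. By the $p$-$(SR)$ property, $A$ is relatively weakly compact, so $W:=\overline{A}^{\,w}$ is a nonempty weakly compact subset of $X^{\ast}$; since $A\subseteq W$ we get $\hat{d}(K,W)\le\hat{d}(K,A)<\zeta_{p}(K)+\varepsilon$, hence $\omega(K)\le\zeta_{p}(K)+\varepsilon$, and letting $\varepsilon\downarrow 0$ gives $\mathrm{(v)}$. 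Next, $\mathrm{(v)}\Rightarrow\mathrm{(iv)}$ is immediate by applying $\mathrm{(v)}$ to $K=T^{\ast}(B_{Y^{\ast}})$ and recalling the abbreviations $\zeta_{p}(T^{\ast})=\zeta_{p}(T^{\ast}(B_{Y^{\ast}}))$, $\omega(T^{\ast})=\omega(T^{\ast}(B_{Y^{\ast}}))$. For $\mathrm{(iv)}\Rightarrow\mathrm{(ii)}$, let $T:X\to Y$ be Dunford-Pettis $p$-convergent; by Lemma~\ref{l1}(i) the set $T^{\ast}(B_{Y^{\ast}})$ is a $p$-Right set, so $\zeta_{p}(T^{\ast})=0$, and then $\mathrm{(iv)}$ forces $\omega(T^{\ast})=0$, i.e. $T^{\ast}(B_{Y^{\ast}})$ is relatively weakly compact and $T^{\ast}$ is weakly compact.

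The step $\mathrm{(ii)}\Rightarrow\mathrm{(iii)}$ is trivial (specialise $Y=\ell_{\infty}$). Finally $\mathrm{(iii)}\Rightarrow\mathrm{(i)}$ repeats the argument of Theorem~\ref{t4}: given a $p$-Right set $K\subseteq X^{\ast}$ and a sequence $(x_{n}^{\ast})_{n}$ in $K$, the operator $T:X\to\ell_{\infty}$, $T(x)=(x_{n}^{\ast}(x))_{n}$, is bounded and Dunford-Pettis $p$-convergent because $\sup_{n}|x_{n}^{\ast}(x_{k})|\le\sup_{x^{\ast}\in K}|x^{\ast}(x_{k})|\to 0$ along every $p$-Right null sequence $(x_{k})_{k}$; by $\mathrm{(iii)}$, $T^{\ast}$ is weakly compact, so $(x_{n}^{\ast})_{n}=(T^{\ast}(e_{n}^{1}))_{n}$ has a weakly convergent subsequence, proving $K$ relatively weakly compact.

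Everything here is bookkeeping once Lemma~\ref{l1}(i) is in hand; the only place that needs a moment's thought is $\mathrm{(i)}\Rightarrow\mathrm{(v)}$, where one must ensure that replacing the approximating $p$-Right set $A$ by a weakly compact set does not enlarge the non-symmetrized Hausdorff distance to $K$ --- which is exactly why one passes to the weak closure of $A$ rather than to an unrelated weakly compact set. As an alternative one could invoke Theorem~\ref{t4} with $q=\infty$ directly for $\mathrm{(i)}\Leftrightarrow\mathrm{(ii)}\Leftrightarrow\mathrm{(iii)}$ (noting that weak $\infty$-compactness is weak compactness and that the $p$-$(SR)$ property is precisely the relative weak compactness of $p$-Right subsets of $X^{\ast}$) and only add $\mathrm{(i)}\Rightarrow\mathrm{(v)}\Rightarrow\mathrm{(iv)}\Rightarrow\mathrm{(ii)}$ by hand; the cyclic version above, however, avoids the case distinction $p<\infty$ versus $p=\infty$.
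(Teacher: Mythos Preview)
Your proof is correct. The paper itself offers no argument for this corollary --- it is stated immediately after Theorem~\ref{t4} and the definitions of $\zeta_{p}$ and $\omega$, with the understanding that the equivalence of (i)--(iii) is the case $q=\infty$ of Theorem~\ref{t4} and that (iv), (v) follow from those definitions. Your cycle $\mathrm{(i)}\Rightarrow\mathrm{(v)}\Rightarrow\mathrm{(iv)}\Rightarrow\mathrm{(ii)}\Rightarrow\mathrm{(iii)}\Rightarrow\mathrm{(i)}$ spells out exactly those omitted steps: the $\ell_{\infty}$-factorisation in $\mathrm{(iii)}\Rightarrow\mathrm{(i)}$ is precisely the $(ii)\Rightarrow(iii)$ argument of Theorem~\ref{t4}, and your treatment of $\mathrm{(i)}\Rightarrow\mathrm{(v)}$ via the weak closure of an approximating $p$-Right set is the natural (and only) way to pass from the qualitative statement to the quantitative one. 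Your observation that the cyclic proof also covers $p=\infty$ without invoking Theorem~\ref{t4} (which is stated only for $p<q$) is a genuine improvement over simply citing that theorem.
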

Recently, the notions of $ p $-$ (V) $ sets and $ p $-$ (V) $ property as an extension of the notions $ (V) $ sets and Pelczy\'{n}ski's property $ (V) $ introduced by Li et al.\ \cite{ccl1} as follows:\
\begin{itemize}
\item A bounded subset $ K $ of $ X^{\ast} $ is a $ p $-$ (V ) $ set, if $ \displaystyle \lim_{n\rightarrow\infty}\displaystyle \sup_{x^{\ast}\in K}\vert x^{\ast}(x_{n}) \vert =0,$
for every weakly $ p $-summable sequence $ (x_{n})_{n} $ in $ X. $\
\item A Banach space $ X $ has
Pelczy\'{n}ski's property $ (V) $ of order $ p$ ($p$-$ (V) $ property), if every $ p $-$ (V) $ subset of $
X^{\ast}$ is relatively weakly compact.
\item A bounded subset $ K $ of $ X$ is a $ p $-$ (V^{\ast} ) $ set, if $ \displaystyle \lim_{n\rightarrow\infty}\displaystyle \sup_{x\in K}\vert x^{\ast}_{n}(x) \vert =0,$
for every weakly $ p $-summable sequence $ (x^{\ast}_{n})_{n} $ in $ X^{\ast}. $\
\item A Banach space $ X $ has
Pelczy\'{n}ski's property $ (V^{\ast}) $ of order $ p$ ($p$-$ (V^{\ast}) $ property), if every $ p $-$ (V^{\ast}) $ subset of $
X$ is relatively weakly compact.
\end{itemize}

The proof of the following proposition is similar to the proof {\rm (\cite[Corollary 3.19 (ii) ]{g9})}.\ Therefore, its proof is omitted
\begin{prop} \label{p1}
$ X^{\ast} $ has the $ (DPP_{p}) $ if and only if each $ p $-Right$ ^{\ast} $ set in $ X $ is a $ p $-$ (V^{\ast}) $ set.
\end{prop}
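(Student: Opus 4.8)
The plan is to mimic the proof of \cite[Corollary 3.19 (ii)]{g9}, where the analogous statement relating the $(DPP_p)$ of $X$ with the $p$-$(V)$ sets in $X^{\ast}$ (equivalently $p$-Right sets) is established, and to dualize it. Recall that $X^{\ast}$ has the $(DPP_p)$ if and only if for every weakly $p$-summable sequence $(x^{\ast}_n)_n$ in $X^{\ast}$ and every weakly null sequence $(x^{\ast\ast}_n)_n$ in $X^{\ast\ast}$ one has $\langle x^{\ast\ast}_n, x^{\ast}_n\rangle \to 0$. Equivalently (this is the reformulation I would isolate first), $X^{\ast}$ has the $(DPP_p)$ precisely when every weakly $p$-summable sequence in $X^{\ast}$ is an $(L)$-type sequence relative to $X^{\ast\ast}$, which restricted to $X$ says: every weakly $p$-summable sequence $(x^{\ast}_n)_n$ in $X^{\ast}$ converges uniformly to zero on each relatively weakly compact (indeed each Dunford-Pettis weakly null) subset of $X$.

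First I would prove the forward implication. Assume $X^{\ast}$ has the $(DPP_p)$ and let $K \subseteq X$ be a $p$-Right$^{\ast}$ set. To show $K$ is a $p$-$(V^{\ast})$ set I must show $\sup_{x\in K}|x^{\ast}_n(x)| \to 0$ for every weakly $p$-summable sequence $(x^{\ast}_n)_n$ in $X^{\ast}$. The key observation is that, since $X^{\ast}$ has the $(DPP_p)$, every such weakly $p$-summable sequence $(x^{\ast}_n)_n$ in $X^{\ast}$ is automatically a $p$-Right null sequence in $X^{\ast}$ — that is, it is Dunford-Pettis and weakly null (weak nullity being built into weak $p$-summability; the Dunford-Pettis part is exactly what $(DPP_p)$ buys, using the characterization recalled above that $(DPP_p)$ forces such sequences to act like $(L)$-sets on $X^{\ast\ast}$). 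Once $(x^{\ast}_n)_n$ is known to be $p$-Right null in $X^{\ast}$, the defining property of the $p$-Right$^{\ast}$ set $K$ (Definition \ref{d1}(i)) gives $\sup_{x\in K}|x^{\ast}_n(x)| \to 0$, which is exactly the $p$-$(V^{\ast})$ condition.

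For the converse, assume every $p$-Right$^{\ast}$ set in $X$ is a $p$-$(V^{\ast})$ set, and suppose toward a contradiction that $X^{\ast}$ fails the $(DPP_p)$. Then there is a weakly $p$-summable sequence $(x^{\ast}_n)_n$ in $X^{\ast}$ and a weakly null sequence $(x^{\ast\ast}_n)_n$ in $X^{\ast\ast}$ with $\langle x^{\ast\ast}_n, x^{\ast}_n\rangle \not\to 0$; after passing to a subsequence we may assume $|\langle x^{\ast\ast}_n, x^{\ast}_n\rangle| \geq \varepsilon$ for all $n$. Using Goldstine together with a perturbation/gliding-hump argument (as in \cite{g9}), I would replace $(x^{\ast\ast}_n)_n$ by a sequence $(x_n)_n$ in $X$ that is weakly null, Dunford-Pettis, and still satisfies $|x^{\ast}_n(x_n)| \geq \varepsilon/2$; the set $K := \{x_n : n\in\mathbb{N}\}$ is then a Dunford-Pettis weakly null set, hence a $p$-Right$^{\ast}$ set (one checks directly that any $p$-Right null sequence in $X^{\ast}$ tends to zero uniformly on a relatively norm-compact or Dunford-Pettis weakly null set — in fact a relatively compact set suffices here). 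By hypothesis $K$ is a $p$-$(V^{\ast})$ set, so $\sup_{x\in K}|x^{\ast}_n(x)| \to 0$, contradicting $|x^{\ast}_n(x_n)| \geq \varepsilon/2$.

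The main obstacle is the converse direction, specifically the gliding-hump/perturbation step that converts the bad sequence in $X^{\ast\ast}$ produced by the failure of $(DPP_p)$ into a genuine sequence in $X$ that is simultaneously weakly null, Dunford-Pettis, and still tests the functionals $x^{\ast}_n$ nontrivially; controlling all three properties at once while staying inside $X$ is the delicate point. Since the paper explicitly states the argument is parallel to \cite[Corollary 3.19 (ii)]{g9} and omits it, I would invoke that reference for this technical core rather than reproduce it, and simply verify that the $p$-Right$^{\ast}$/$p$-$(V^{\ast})$ bookkeeping goes through verbatim under the dual roles of $X$ and $X^{\ast}$.
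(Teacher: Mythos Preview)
Your forward implication is correct and is exactly the expected argument: under $(DPP_p)$ for $X^{\ast}$, every weakly $p$-summable sequence in $X^{\ast}$ is Dunford-Pettis, hence $p$-Right null, and Definition~\ref{d1}\,(i) then gives the $p$-$(V^{\ast})$ conclusion immediately.

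The converse, however, has a genuine gap. Failure of $(DPP_p)$ for $X^{\ast}$ produces a weakly $p$-summable sequence $(x^{\ast}_n)_n$ in $X^{\ast}$ together with a weakly null sequence $(x^{\ast\ast}_n)_n$ in $X^{\ast\ast}$ --- not in $X$. Your proposed fix, to use Goldstine plus a gliding-hump to manufacture a sequence $(x_n)_n$ in $X$ that is simultaneously weakly null, Dunford-Pettis, and still satisfies $|x^{\ast}_n(x_n)|\ge \varepsilon/2$, is not justified. Goldstine yields only weak$^{\ast}$ approximants in $B_X$; it gives no control over weak nullity in $X$ and none whatsoever over the Dunford-Pettis property of the resulting set. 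Your deferral to \cite[Corollary~3.19\,(ii)]{g9} for this ``technical core'' is misplaced: in that corollary the witnessing weakly null sequence already lives in $X^{\ast}$ (the bidual never enters), so no descent from $X^{\ast\ast}$ to $X$ is performed or needed there. The straight analogue of the \cite{g9} argument would take the weakly null sequence $(x^{\ast\ast}_n)_n$ itself --- as a relatively weakly compact, hence $p$-Right$^{\ast}$, set --- and apply the hypothesis; but that set lies in $X^{\ast\ast}$, outside the scope of the assumption, which concerns only subsets of $X$. You have correctly located the obstacle, but your proposal does not remove it, and the cited reference does not supply the missing step.
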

\begin{prop}\label{p2}\rm
Let $ X $ be a Banach space.\ The following statements hold:\\
$ \rm{(i)} $ If $ X $ has the $ p $-$ (SR) $ property, then $ X^{\ast} $ has the $ p $-$ (SR^{\ast})$ property.\\
$ \rm{(ii)} $ If $ X $ has the $ p $-$ (SR) $ property, then $ X $ has the $ p $-$ (V)$ property.\\
$ \rm{(iii)} $ If $ X^{\ast} $ has the $ p $-$ (SR) $ property, then $ X $ has the $ p $-$ (SR^{\ast}) $ property,\\
$ \rm{(iv)} $ If $ X $ has the $ p $-$ (SR^{\ast}) $ property, then $ X $ has the $ p $-$ (V^{\ast}) $ property.\\
$ \rm{(v)} $ Let $ Y$ be a reflexive subspace of $ X. $\ If $ \frac{X}{Y} $ has the $ p $-$ (SR^{\ast}) $ property,
then $ X $ has the same property.\
\end{prop}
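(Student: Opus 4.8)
Parts (i), (ii) and (iv) I would settle purely by the inclusions among the relevant families of sets. For (i), a $p$-Right$^{\ast}$ subset of the dual space $X^{\ast}$ is in particular a $p$-Right subset of $X^{\ast}$ (the remark preceding Theorem~\ref{t1}), so if $X$ has the $p$-$(SR)$ property every such set is relatively weakly compact, which is exactly the $p$-$(SR^{\ast})$ property of $X^{\ast}$. For (ii), since every $p$-Right null sequence in $X$ is weakly $p$-summable, each $p$-$(V)$ subset of $X^{\ast}$ is a $p$-Right subset of $X^{\ast}$; hence the $p$-$(SR)$ property forces every $p$-$(V)$ set to be relatively weakly compact, i.e.\ $X$ has the $p$-$(V)$ property. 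Part (iv) is the ``starred'' analogue of (ii): since every $p$-Right null sequence in $X^{\ast}$ is weakly $p$-summable, every $p$-$(V^{\ast})$ subset of $X$ is a $p$-Right$^{\ast}$ subset of $X$, and the $p$-$(SR^{\ast})$ property then yields the $p$-$(V^{\ast})$ property.

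For (iii) the key point is that a bounded set $K\subseteq X$ is a $p$-Right$^{\ast}$ set in $X$ precisely when, viewed inside $X^{\ast\ast}=(X^{\ast})^{\ast}$, it is a $p$-Right set there: both statements read $\sup_{x\in K}|x^{\ast}_{n}(x)|\to 0$ for every $p$-Right null sequence $(x^{\ast}_{n})_{n}$ in $X^{\ast}$. Thus, if $X^{\ast}$ has the $p$-$(SR)$ property, $K$ is relatively weakly compact in $X^{\ast\ast}$. Since $X$ is a norm-closed (hence weakly closed, by Mazur) subspace of $X^{\ast\ast}$ whose weak topology is induced from that of $X^{\ast\ast}$, relative weak compactness in $X^{\ast\ast}$ of a subset of $X$ descends to relative weak compactness in $X$; concretely, by Eberlein--\v{S}mulian a weak limit in $X^{\ast\ast}$ of a sequence from $K$ annihilates the annihilator $X^{\perp}\subseteq X^{\ast\ast\ast}$ of $X$, hence belongs to $X$. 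Therefore $X$ has the $p$-$(SR^{\ast})$ property.

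For (v) let $q:X\to X/Y$ be the quotient map and let $K\subseteq X$ be a $p$-Right$^{\ast}$ set. The first step is to check that $q(K)$ is a $p$-Right$^{\ast}$ set in $X/Y$: identifying $(X/Y)^{\ast}$ with $Y^{\perp}\subseteq X^{\ast}$ via $q^{\ast}$, a $p$-Right null sequence $(\xi_{n})_{n}$ in $(X/Y)^{\ast}$ is carried by the bounded operator $q^{\ast}$ to a $p$-Right null sequence $(q^{\ast}\xi_{n})_{n}$ in $X^{\ast}$ (a bounded operator preserves weak $p$-summability of a sequence and sends Dunford--Pettis sets to Dunford--Pettis sets), whence $\sup_{z\in q(K)}|\xi_{n}(z)|=\sup_{x\in K}|q^{\ast}\xi_{n}(x)|\to 0$. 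By hypothesis $q(K)$ is relatively weakly compact in $X/Y$, so its weak$^{\ast}$-closure in $(X/Y)^{\ast\ast}$ lies in $X/Y$. Now use the reflexivity of $Y$: then $Y^{\perp\perp}=Y$ inside $X^{\ast\ast}$, and under the identification $(X/Y)^{\ast\ast}\cong X^{\ast\ast}/Y^{\perp\perp}$ the operator $q^{\ast\ast}$ is the quotient map $X^{\ast\ast}\to X^{\ast\ast}/Y^{\perp\perp}$. If $\Phi$ lies in the weak$^{\ast}$-closure of $K$ in $X^{\ast\ast}$, then (weak$^{\ast}$-to-weak$^{\ast}$ continuity of $q^{\ast\ast}$) $q^{\ast\ast}\Phi$ lies in the weak$^{\ast}$-closure of $q(K)$, hence in $X/Y$, which forces $\Phi\in X+Y^{\perp\perp}=X$. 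So the weak$^{\ast}$-closure of $K$ in $X^{\ast\ast}$ sits in $X$, i.e.\ $K$ is relatively weakly compact in $X$, and $X$ has the $p$-$(SR^{\ast})$ property.

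The only step I expect to require genuine care is the last part of (v): the three-space passage from ``$q(K)$ relatively weakly compact and $Y$ reflexive'' to ``$K$ relatively weakly compact'', since this is where one must combine the bidual criterion $\overline{K}^{\,w^{\ast}}\subseteq X$ for relative weak compactness with the identification $Y^{\perp\perp}=Y$ afforded by reflexivity; everything else is definition-chasing. If one prefers to stay at the operator level, an alternative route to (v) is to use Lemma~\ref{l1}(vi) to write $K\subseteq T(B_{Z})$ for an operator $T:Z\to X$ with $T$ and $T^{\ast}$ both Dunford--Pettis $p$-convergent, apply the $p$-$(SR^{\ast})$ property of $X/Y$ to $qT:Z\to X/Y$ to conclude $qT$ is weakly compact, and then deduce via Gantmacher's theorem and $Y^{\perp\perp}=Y$ that $T$, hence $T(B_{Z})\supseteq K$, is weakly compact.
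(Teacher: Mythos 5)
Your proposal is correct. For parts (i)--(iv) the paper simply declares the proofs ``clear'' and omits them, so your definition-chasing via the inclusions ($p$-Right$^{\ast}$ sets of $X^{\ast}$ are $p$-Right sets; $p$-$(V)$ and $p$-$(V^{\ast})$ sets are $p$-Right and $p$-Right$^{\ast}$ sets respectively; a $p$-Right$^{\ast}$ set in $X$ is exactly a $p$-Right set in $(X^{\ast})^{\ast}$) is precisely the intended content, and you supply the details the paper skips. For part (v) the two arguments share the same skeleton --- push $K$ forward through the quotient map, use the $p$-$(SR^{\ast})$ property of $X/Y$, then lift weak compactness back through a quotient with reflexive kernel --- but they diverge at the lifting step. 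The paper passes to a sequence $(x_{n})_{n}$ in $K$, extracts a weakly convergent subsequence of $(Q(x_{n}))_{n}$, and invokes the Gonz\'alez--Onieva lifting theorem (\cite[Theorem 2.7]{gh}) as a black box to get a weakly convergent subsequence of $(x_{n})_{n}$. You instead prove the lifting directly at the level of sets, via the bidual criterion $\overline{K}^{\,w^{\ast}}\subseteq X$ together with the identifications $(X/Y)^{\ast\ast}\cong X^{\ast\ast}/Y^{\perp\perp}$ and $Y^{\perp\perp}=Y$ (reflexivity). Your route is self-contained and makes explicit exactly where reflexivity of $Y$ enters, at the cost of a slightly longer argument; the paper's is shorter but defers the three-space step to a reference. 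Your remark that continuous images under $q$ of $p$-Right$^{\ast}$ sets are $p$-Right$^{\ast}$ sets (because $q^{\ast}$ carries $p$-Right null sequences to $p$-Right null sequences) is also a detail the paper asserts without proof, and your justification of it is sound.
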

\begin{proof}
Since, the proof of $\rm{(i)}, $ $\rm{(ii)} ,$ $ \rm{(iii)} $ and $ \rm{(iv)} $ are clear, we only prove $ \rm{(v)} .$\\
$ \rm{(i)} $ Let $ Q : X \rightarrow \frac{X}{Y} $ be the quotient map.\ Let $ K$ be a $ p $-Right$ ^{\ast}$ set in $ X $ and $ (x_{n})_{n} $ be
an arbitrary sequence in $ K. $\ Then $ (Q(x_{n}))_{n} $ is a $ p $-Right$ ^{\ast} $ set in $ \frac{X}{Y},$ and thus relatively weakly
compact.\ By passing to a subsequence, suppose $ (Q(x_{n}))_{n} $ is weakly convergent.\ By
{\rm (\cite[Theorem 2.7]{gh})}, $ (x_{n})_{n} $ has a weakly convergent subsequence.\ Thus $ X$ has the $ p $-$ (SR^{\ast}) $ property.\
\end{proof}
\begin{cor}\label{c4} If
$ X $ has the $ (DPP_{p}),$ then $ X $ has Pelczy$\acute{n} $ski's property $ (V ) $ of order $p $ if and only $ X $ has the $ p $-$ (SR) $ property.
\end{cor}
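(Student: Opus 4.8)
The plan is to prove the equivalence by showing that, under the standing hypothesis that $X$ has the $(DPP_{p})$, a bounded subset of $X^{\ast}$ is a $p$-Right set if and only if it is a $p$-$(V)$ set; the corollary then follows at once, since the $p$-$(SR)$ property and the $p$-$(V)$ property assert, respectively, that every $p$-Right set and every $p$-$(V)$ set in $X^{\ast}$ is relatively weakly compact.

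One inclusion needs no hypothesis. If $K\subseteq X^{\ast}$ is a $p$-$(V)$ set and $(x_{n})_{n}$ is a $p$-Right null sequence in $X$, then $(x_{n})_{n}$ is in particular weakly $p$-summable, so $\sup_{x^{\ast}\in K}\vert x^{\ast}(x_{n})\vert\to 0$; hence $K$ is a $p$-Right set. (At the level of operators this is Proposition \ref{p2}(ii): by Lemma \ref{l1} a $p$-Right set is exactly an image $T^{\ast}(B_{Y^{\ast}})$ of a Dunford-Pettis $p$-convergent operator, while a $p$-$(V)$ set is the analogous image of a $p$-convergent operator, and every $p$-convergent operator is Dunford-Pettis $p$-convergent.)

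The core step is the reverse inclusion. I claim that if $X$ has the $(DPP_{p})$ then every weakly $p$-summable sequence $(x_{n})_{n}$ in $X$ is a Dunford-Pettis set, hence is itself a $p$-Right null sequence. To see this, fix a weakly null sequence $(g_{k})_{k}$ in $X^{\ast}$ and suppose, for contradiction, that $\sup_{n}\vert g_{k}(x_{n})\vert\not\to 0$. Passing to subsequences we obtain $\varepsilon>0$ and indices $(n_{j})_{j}$ with $\vert g_{k_{j}}(x_{n_{j}})\vert\ge\varepsilon$ for all $j$. If $(n_{j})_{j}$ admits a constant subsequence, we contradict $g_{k}\to 0$ weakly; otherwise we may assume $(n_{j})_{j}$ is strictly increasing, so $(x_{n_{j}})_{j}$ is again weakly $p$-summable, and the equivalent formulation of $(DPP_{p})$ recalled in Section 2 gives $g_{k_{j}}(x_{n_{j}})\to 0$, again a contradiction. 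Hence $\{x_{n}\}_{n}$ is a Dunford-Pettis set. Consequently, if $K\subseteq X^{\ast}$ is a $p$-Right set, then every weakly $p$-summable sequence in $X$ is $p$-Right null and therefore converges uniformly to zero on $K$, i.e.\ $K$ is a $p$-$(V)$ set.

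Putting the two inclusions together, under $(DPP_{p})$ the classes of $p$-Right sets and of $p$-$(V)$ sets in $X^{\ast}$ coincide, so every $p$-Right set in $X^{\ast}$ is relatively weakly compact if and only if every $p$-$(V)$ set in $X^{\ast}$ is; that is, $X$ has the $p$-$(SR)$ property if and only if $X$ has Pelczy\'{n}ski's property $(V)$ of order $p$. The only point requiring care is the subsequence bookkeeping in the core step — separating the case where the index sequence $(n_{j})_{j}$ is eventually bounded (handled by the weak nullity of $(g_{k})_{k}$) from the case where it has a strictly increasing subsequence (handled by $(DPP_{p})$); everything else is a routine translation between a set and the sequences that test membership in these classes.
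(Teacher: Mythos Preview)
Your proof is correct and follows essentially the same route as the paper: the heart of both arguments is that under $(DPP_{p})$ the classes of $p$-Right sets and $p$-$(V)$ sets in $X^{\ast}$ coincide, after which the equivalence of the $p$-$(SR)$ and $p$-$(V)$ properties is immediate. The only difference is that the paper obtains this coincidence by citing \cite[Corollary~3.19(ii)]{g9} and then routes through the operator characterization of Corollary~\ref{c3}, whereas you prove it directly from the sequential characterization of $(DPP_{p})$ recalled in Section~2; your argument is therefore self-contained.
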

\begin{proof}Suppose that $ X $ has Pelczy$ \acute{n} $ski's property $ (V ) $ of order $ p. $\ We show that for each Banach space $ Y, $ adjoint every Dunford-Pettis $ p $-convergent $ T:X\rightarrow Y $
is weakly compact.\ Let $ T \in DPC_{p}(X,Y) .$\ The part $ \rm{(i)} $ of Lemma \ref{l1}, implies that $ T^{\ast}(B_{Y^{\ast}}) $ is a $ p $-Right set in $ X^{\ast}. $\ So, {\rm (\cite[Corollary 3.19 (ii) ]{g9})} implies that $ T^{\ast}(B_{Y^{\ast}}) $ is a $ p $-$ (V) $ set in $ X^{\ast}. $\ Since $ X $ has Pelczy$ \acute{n} $ski's property $ (V ) $ of order $ p, $ $ T^{\ast} $ is weakly compact.\ Hence, Corollary \ref{c3} implies that $X $ has the $ p $-$ (SR)$ property.
Conversely, If $ X $ has the $ p $-$ (SR)$ property, then $ X $ has Pelczy$\acute{n} $ski's property $ (V ) $ of order $ p.$\ Since every $ p $-$ (V) $ set in $ X^{\ast} $ is a $ p $-Right set.
\end{proof}
Suppose that $ X $ is a Banach space and $ Y $ is a subspace of $ X^{\ast}. $\ We define $^{\bot}Y:=\lbrace x\in X: y^{\ast}(x)=0 ~~$ for all $~~~ y^{\ast}\in Y^{\ast} \rbrace .$\
\begin{cor}\label{c5}
$ \rm{(i)} $ If $ X$ is an infinite dimensional non reflexive Banach space with the $ p$-Schur property, then $ X$
does not have the $ p $-$ (SR)$ property.\\
$ \rm{(ii)} $ If every separable subspace of $ X$ has the $ p $-$ (SR) $ property, then $ X $ has the same property.\\
$ \rm{(iii)} $ Let $Y $ be a reflexive subspace of $ X^{\ast}. $\ If $ ^{\bot}Y$ has the $ p $-$ (SR) $ property, then $ X $ has the same property.\
\end{cor}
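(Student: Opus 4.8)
I would split the argument into the three independent statements and reduce each of them to the operator characterisation of the $p$-$(SR)$ property in Corollary~\ref{c3}. \emph{Part (i)} is the shortest: the $p$-Schur property means precisely that $id_{X}$ is $p$-convergent, hence Dunford-Pettis $p$-convergent, so Lemma~\ref{l1}(i) (applied with $T=id_{X}$) shows that $B_{X^{\ast}}=id_{X^{\ast}}(B_{X^{\ast}})$ is a $p$-Right subset of $X^{\ast}$. Were $X$ to have the $p$-$(SR)$ property, $B_{X^{\ast}}$ would be relatively weakly compact, forcing $X^{\ast}$, and therefore $X$, to be reflexive, contrary to the hypothesis. (Infinite dimensionality only serves to keep the statement non-vacuous.)

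For \emph{Part (ii)} I would argue by contraposition. Suppose $X$ fails the $p$-$(SR)$ property. By Corollary~\ref{c3} there is a Dunford-Pettis $p$-convergent operator $T:X\to\ell_{\infty}$ whose adjoint is not weakly compact, hence, by Gantmacher's theorem, $T$ itself is not weakly compact, so by the Eberlein--\v{S}mulian theorem some sequence $(z_{n})_{n}$ in $B_{X}$ has the property that $(Tz_{n})_{n}$ admits no weakly convergent subsequence in $\ell_{\infty}$. Let $Z$ be the closed linear span of $\{z_{n}\}_{n}$, a separable subspace of $X$, and let $j:Z\hookrightarrow X$ be the inclusion. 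Since a bounded operator carries Dunford-Pettis weakly $p$-summable sequences to Dunford-Pettis weakly $p$-summable sequences, $T\circ j:Z\to\ell_{\infty}$ is Dunford-Pettis $p$-convergent; as $(T\circ j)(B_{Z})\supseteq\{Tz_{n}\}_{n}$, it is not relatively weakly compact, whence $T\circ j$, and by Gantmacher $(T\circ j)^{\ast}$, is not weakly compact. By Corollary~\ref{c3} the separable space $Z$ fails the $p$-$(SR)$ property, contradicting the hypothesis; hence $X$ has the $p$-$(SR)$ property.

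For \emph{Part (iii)} put $Z={}^{\bot}Y$ with inclusion $j:Z\hookrightarrow X$. Because $Y$ is reflexive, $B_{Y}$ is weakly compact, hence weak$^{\ast}$-compact in $X^{\ast}$, and since $rB_{Y}=Y\cap rB_{X^{\ast}}$ for every $r>0$, the Krein--\v{S}mulian theorem shows that $Y$ is weak$^{\ast}$-closed; consequently $({}^{\bot}Y)^{\bot}=Y$, so $j^{\ast}$ factors as the quotient map $\pi:X^{\ast}\to X^{\ast}/Y$ followed by an isometric isomorphism $X^{\ast}/Y\cong Z^{\ast}$. Now let $K$ be a $p$-Right subset of $X^{\ast}$. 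Every Dunford-Pettis weakly $p$-summable sequence in $Z$ is one in $X$, so restricting functionals to $Z$ shows that $j^{\ast}(K)$ is a $p$-Right subset of $Z^{\ast}$; by hypothesis $j^{\ast}(K)$ is relatively weakly compact, hence so is $\pi(K)$. Applying \cite[Theorem~2.7]{gh} to the reflexive subspace $Y$ of $X^{\ast}$, exactly as in the proof of Proposition~\ref{p2}(v), every sequence in $K$ has a weakly convergent subsequence in $X^{\ast}$, so $K$ is relatively weakly compact by the Eberlein--\v{S}mulian theorem. Thus $X$ has the $p$-$(SR)$ property.

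The step I expect to require the most care is the passage to a subspace in Parts (ii) and (iii): one must check that restriction genuinely preserves the compound condition ``weakly $p$-summable \emph{and} Dunford-Pettis'' — both halves survive because the restriction map $X^{\ast}\to Z^{\ast}$ is weak-to-weak continuous — and, in Part (iii), that the reflexivity of $Y$ is exactly what yields $({}^{\bot}Y)^{\bot}=Y$, and hence the identification $Z^{\ast}\cong X^{\ast}/Y$ on which the appeal to \cite[Theorem~2.7]{gh} rests.
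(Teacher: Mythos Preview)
Your proof is correct and follows essentially the same strategy as the paper. For (i) both arguments note that the $p$-Schur property makes $id_{X}$ Dunford-Pettis $p$-convergent and then invoke the operator characterisation in Corollary~\ref{c3}; for (ii) the paper gives the direct version of your contrapositive argument (restrict $T$ to the closed linear span of a given sequence in $B_{X}$); for (iii) both proofs rest on the identification $({}^{\bot}Y)^{\ast}\cong X^{\ast}/Y$ (the paper cites \cite[Theorem~1.10.6]{m} rather than Krein--\v{S}mulian) together with the reflexive-kernel lifting, the only difference being that the paper phrases the conclusion through Corollary~\ref{c3} while you work directly with $p$-Right sets and \cite[Theorem~2.7]{gh}.
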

\begin{proof}
$ \rm{(i)} $ Since $ X\in C_{p},$ the identity operator $ id_{X} : X \rightarrow X $ is $ p $-convergent and so, it is
Dunford-Pettis $ p $-convergent.\ It is clear that $ id_{X} $ is not weakly compact.\ Hence, Corollary \ref{c3} implies that $ X $ does not have the $ p $-$ (SR)$ property.\\
$ \rm{(ii)} $ Let $ (x_{n})_{n} $ be a sequence in $ B_{X} $ and let $ Z = [x_{n} : n \in \mathbb{N}] $ be the closed linear
span of $ (x_{n})_{n} .$\ Since $ Z$ is a separable subspace of $ X, $ $ Z $ has the $ p $-$(SR)$ property.\ Now, let $ T : X \rightarrow Y $ be a Dunford-Pettis $ p $-convergent operator.\ It is clear that $ T_{\vert Z} $ is a Dunford-Pettis $ p $-convergent operator.\ Therefore, Corollary \ref{c3}, implies that $ T_{\vert Z} $ is weakly compact.\ Hence,
there is a subsequence $ (x_{n_{k}} )_{k} $ of $ (x_{n})_{n} $ so that $ T(x_{n_{k}} ) $ is
weakly convergent.\ Thus $ T$ is weakly compact.\ Now an appeal to Corollary \ref{c3} completes the proof.\\
$ \rm{(iii)} $ By {\rm (\cite[Theorem 1.\ 10.\ 6]{m})},
there exists a quotient map $ Q: X^{\ast}\rightarrow \frac{X^{\ast}}{Y} $ and a surjective isomorphism $ i:\frac{X^{\ast}}{Y}\rightarrow ~(^{\bot}Y) ^{\ast}$ such that $ i\circ Q:X^{\ast}\rightarrow ~(^{\bot}Y)^{\ast} $ is $ w^{\ast} $-$ w^{\ast} $ continuous.\ Therefore, there is $ S: ~^{\bot}Y \rightarrow X $
with $ S^{\ast}=i\circ Q. $\ Hence, for any Dunford-Pettis $ p $-convergent operator $ ~~~ $ $ T:X\rightarrow Z, $
the operator $ T\circ S: ~^{\bot}Y \rightarrow Z$ is Dunford-Pettis $ p$-convergent, that must
be weakly compact; hence, $ S^{\ast}\circ T^{\ast} =i\circ Q\circ T^{\ast} $ is also weakly compact,
this in turn gives that $ Q\circ T^{\ast} $ must be weakly compact, since $ i$ is a surjective
isomorphism.\ Therefore
$ T^{\ast} .$ \ The Corollary \ref{c3} completes the proof.\
\end{proof}

\begin{thm}\label{t5}
$ \rm{(i)}$ Suppose that $L_{w^{\ast}}(X^{\ast} ,Y)= K_{w^{\ast}}(X^{\ast} ,Y) .$\ If $ X$ and $ Y $ have the $ p $-$ (SR)$ property, then $ K_{w^{\ast}}(X^{\ast} ,Y) $ has the same property.\\
$ \rm{(i)}$ Suppose that $L(X ,Y)= K(X ,Y) .$\ If $ X^{\ast} $ and $ Y $ have the $ p $-$ (SR)$ property, then $ K(X, Y ) $ has the same property.
\end{thm}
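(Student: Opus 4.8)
The plan is to obtain both parts from the definition of the $p$-$(SR)$ property together with Corollary~\ref{c3}. Throughout, write $\mathcal{M}$ for $K_{w^{\ast}}(X^{\ast},Y)$ in the first part and for $K(X,Y)$ in the second. By definition $\mathcal{M}$ has the $p$-$(SR)$ property if and only if every $p$-Right subset of $\mathcal{M}^{\ast}$ is relatively weakly compact (and, by Corollary~\ref{c3}, if and only if the adjoint of every Dunford-Pettis $p$-convergent operator from $\mathcal{M}$ into $\ell_{\infty}$ is weakly compact); so I fix a $p$-Right subset $H$ of $\mathcal{M}^{\ast}$ and a sequence $(\mu_{n})_{n}$ in $H$, and aim to extract a weakly convergent subsequence in $\mathcal{M}^{\ast}$.

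The second step is to cut $H$ into pieces lying in the duals of the building-block spaces, using the point evaluations $\phi_{a}$, $\psi_{b^{\ast}}$ of Theorems~\ref{t2} and~\ref{t3} and the rank-one operators behind them. In the second part, for $x^{\ast}\in B_{X^{\ast}}$ and $y\in B_{Y}$ the bounded operators $y\mapsto x^{\ast}\otimes y$, from $Y$ into $K(X,Y)$, and $x^{\ast}\mapsto x^{\ast}\otimes y$, from $X^{\ast}$ into $K(X,Y)$, dualize to operators $R_{x^{\ast}}\colon K(X,Y)^{\ast}\to Y^{\ast}$ and $S_{y}\colon K(X,Y)^{\ast}\to X^{\ast\ast}$. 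Since a bounded operator carries a Dunford-Pettis weakly $p$-summable sequence to a Dunford-Pettis weakly $p$-summable sequence (bounded operators preserve weak $p$-summability and send Dunford-Pettis sets to Dunford-Pettis sets), the hypothesis that $H$ is $p$-Right in $K(X,Y)^{\ast}$ forces each section $R_{x^{\ast}}(H)$ to be a $p$-Right subset of $Y^{\ast}$ and each $S_{y}(H)$ to be a $p$-Right subset of $X^{\ast\ast}$. As $Y$ and $X^{\ast}$ have the $p$-$(SR)$ property, Corollary~\ref{c3}(v) --- applied to $X^{\ast}$ for the sets $S_{y}(H)$ --- shows that all of these sections are relatively weakly compact. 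The first part is entirely parallel: the operators in $K_{w^{\ast}}(X^{\ast},Y)$ are weak$^{\ast}$-to-weak continuous, so that (equivalently, through the isometry $T\mapsto T^{\ast}$ between $K_{w^{\ast}}(X^{\ast},Y)$ and $K_{w^{\ast}}(Y^{\ast},X)$) the corresponding sections land in the duals of the building blocks $X$ and $Y$, whose $p$-$(SR)$ property is then invoked.

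The final and decisive step is the gluing: one must recover the relative weak compactness of $H$ from that of all its coordinate sections $R_{x^{\ast}}(H)$, $S_{y}(H)$, and here the hypothesis $L=K$ enters essentially. The point is that every $T\in\mathcal{M}$ is compact, so that $T(B_{X})$ and $T^{\ast}(B_{Y^{\ast}})$ are relatively norm compact; this compactness provides the uniformity over the directions $x^{\ast}\in B_{X^{\ast}}$ and $y\in B_{Y}$ needed to recombine the coordinate-wise data. I would use it to show that, after an exhaustion and diagonal extraction making $R_{x^{\ast}}(\mu_{n})$ and $S_{y}(\mu_{n})$ weakly convergent for $x^{\ast}$ and $y$ ranging over appropriate norming families, the resulting subsequence of $(\mu_{n})_{n}$ is weakly convergent in $\mathcal{M}^{\ast}$; hence $H$ is relatively weakly compact and $\mathcal{M}$ has the $p$-$(SR)$ property. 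I expect this gluing --- converting the compactness forced by $L=K$ into a bound uniform over the (generally non-separable) families of coordinate directions, so that coordinate-wise weak convergence becomes genuine weak convergence in $\mathcal{M}^{\ast}$ --- to be the principal difficulty; the reduction to $p$-Right subsets of $\mathcal{M}^{\ast}$, the identification of the coordinate sections as $p$-Right sets, and the extraction of the subsequence are routine.
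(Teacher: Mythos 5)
Your reduction to coordinate sections is the same first move as the paper's: push the set through the evaluation maps, observe that continuous linear images of $p$-Right null sequences are again $p$-Right null (so the sections are $p$-Right sets in the duals of the building blocks), and invoke the $p$-$(SR)$ hypotheses to make the sections relatively weakly compact. That part of your proposal is sound. The problem is the step you yourself flag as ``the principal difficulty'': the recombination. You do not prove it, you only describe what it would have to accomplish, and the sketch you give does not obviously work. Weak convergence of a subsequence of $(\mu_{n})_{n}$ in $\mathcal{M}^{\ast}$ must be tested against \emph{all} of $\mathcal{M}^{\ast\ast}$, not merely against the functionals coming from the evaluation/rank-one directions $x^{\ast}\otimes y$; coordinatewise weak convergence of $R_{x^{\ast}}(\mu_{n})$ and $S_{y}(\mu_{n})$ over a norming family, plus norm-compactness of each individual operator in $\mathcal{M}$, does not by itself bridge that gap. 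Since this recombination is exactly where the hypothesis $L=K$ is used, the proof is missing its essential ingredient.

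The paper does not attempt the gluing by hand either: it outsources it entirely to a Davis--Figiel--Johnson--Pe{\l}czy\'nski-type criterion of Ghenciu and Lewis (Theorem 4.8 of \cite{g17}), which says that under $L_{w^{\ast}}(X^{\ast},Y)=K_{w^{\ast}}(X^{\ast},Y)$ a bounded subset $H$ of the operator space is relatively weakly compact as soon as all point evaluations $H(x^{\ast})$ and $H^{\ast}(y^{\ast})$ are relatively weakly compact. Note also a structural mismatch with your setup: that criterion concerns bounded subsets of $K_{w^{\ast}}(X^{\ast},Y)$ itself, acted on by the evaluations $T\mapsto T(x^{\ast})$ and $T\mapsto T^{\ast}(y^{\ast})$, whereas your sections $R_{x^{\ast}}(H)$, $S_{y}(H)$ are images of a subset of the \emph{dual} of the operator space. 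So even to finish by citation you would have to reconcile where the $p$-Right set lives (the paper itself is careless on this point, writing ``$p$-Right subset of $K_{w^{\ast}}(X^{\ast},Y)$'' while the $p$-$(SR)$ property is about subsets of the dual). In short: either locate and correctly apply a compactness criterion of the Ghenciu--Lewis type for the sets you are actually working with, or supply a genuine proof of the recombination step; as it stands the argument is incomplete precisely at its core.
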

\begin{proof} Since the proofs of (i) and (ii) are essentially the same, we only present
that of (i).\\
$ \rm{(i)}$ Suppose $ X $ and $ Y $ have the $ p $-$ (SR) $ property.\ Let $ H$ be a
$ p $-Right subset of $ K_{w^{\ast}}(X^{\ast} ,Y). $\
For fixed $ x^{\ast}$ in $ X^{\ast}, $ the map $ T\mapsto T(x^{\ast}) $ is a bounded
operator from $ K_{w^{\ast}}(X^{\ast} ,Y) $ into $ Y. $\ It can easily seen that continuous linear images of $ p $-Right sets are $ p $-Right sets.\ Therefore, $ H(x^{\ast}) $
is a $ p $-Right subset of $ Y, $ hence it is relatively
weakly compact.\ For fixed $ y^{\ast}$ in $ Y^{\ast} $ the map $ T\mapsto T^{\ast}(y^{\ast}) $
is a bounded linear operator from
$ K_{w^{\ast}}(X^{\ast} ,Y) $ into $ X. $\ So, $ H^{\ast}(y^{\ast}) $
is a $ p $-Right subset of $ X, $ hence it is relatively weakly compact.\ Then,
{\rm (\cite[Theorem 4.\ 8]{g17})} implies that $ H $ is relatively weakly compact.
\end{proof}
Cilia and Emmanuele in \cite{ce1}
investigated whether
the projective tensor product of two Banach spaces $ X $ and $ Y $ has the sequentially Right
property when $  X$ and $ Y $ have the respective property.\\
In the following,
the stability of $ p$-sequentially Right property for projective tensor product between Banach spaces is investigated.
\begin{thm}\label{t6}
Suppose that $ X$ has the $ p $-$ (SR) $ property and $ Y $ is a reflexive space.\ If $ L(X,Y^{\ast}) = K(X,Y^{\ast}),$
then
$ X \widehat{\bigotimes}_{\pi} Y $ has the $ p $-$(SR)$ property.\
\end{thm}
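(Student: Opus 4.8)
The plan is to identify $(X\widehat{\bigotimes}_{\pi}Y)^{\ast}$ with $L(X,Y^{\ast})$, to use the hypothesis $L(X,Y^{\ast})=K(X,Y^{\ast})$, and then to run the point‑evaluation argument for spaces of compact operators exactly as in the proof of Theorem \ref{t5}. By the definition of the $p$-$(SR)$ property it suffices to show that every $p$-Right subset $H$ of $(X\widehat{\bigotimes}_{\pi}Y)^{\ast}=K(X,Y^{\ast})$ is relatively weakly compact. Observe first that since $Y$ is reflexive, $Y^{\ast}$ is reflexive as well, so every bounded subset of $Y^{\ast}$ is automatically relatively weakly compact.

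Next I would compute the point evaluations. For $x\in X$ let $\iota_{x}:Y\rightarrow X\widehat{\bigotimes}_{\pi}Y$ be given by $\iota_{x}(y)=x\otimes y$; this is a bounded linear operator whose adjoint $\iota_{x}^{\ast}:K(X,Y^{\ast})\rightarrow Y^{\ast}$ is the evaluation $T\mapsto T(x)$. Since an adjoint map sends $p$-Right sets to $p$-Right sets (the ``continuous linear image'' remark used in the proof of Theorem \ref{t5}), the set $H(x):=\lbrace T(x):T\in H\rbrace$ is a $p$-Right, hence bounded, subset of the reflexive space $Y^{\ast}$, and therefore relatively weakly compact. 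Dually, for $y\in Y$ let $\iota^{y}:X\rightarrow X\widehat{\bigotimes}_{\pi}Y$ be given by $\iota^{y}(x)=x\otimes y$; a short computation shows that its adjoint $(\iota^{y})^{\ast}:K(X,Y^{\ast})\rightarrow X^{\ast}$ is $T\mapsto T^{\ast}(y)$, where we view $Y=Y^{\ast\ast}$. Hence $\widetilde{H}(y):=\lbrace T^{\ast}(y):T\in H\rbrace$ is a $p$-Right subset of $X^{\ast}$, and since $X$ has the $p$-$(SR)$ property, it is relatively weakly compact.

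Finally, since $L(X,Y^{\ast})=K(X,Y^{\ast})$ and all the point evaluations $H(x)$ $(x\in X)$ and $\widetilde{H}(y)$ $(y\in Y=(Y^{\ast})^{\ast})$ are relatively weakly compact, the criterion \cite[Theorem 4.8]{g17} (the very result invoked in the proof of Theorem \ref{t5}) implies that $H$ is relatively weakly compact in $K(X,Y^{\ast})=(X\widehat{\bigotimes}_{\pi}Y)^{\ast}$. As $H$ was an arbitrary $p$-Right subset of the dual, $X\widehat{\bigotimes}_{\pi}Y$ has the $p$-$(SR)$ property.

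I expect the last step to be the main obstacle: one must be certain that the point‑evaluation characterization of relatively weakly compact subsets of $K(X,Y^{\ast})$ genuinely applies, and it is precisely here that the hypothesis $L(X,Y^{\ast})=K(X,Y^{\ast})$ is used in an essential way, since otherwise the dual of the projective tensor product is not a space of compact operators and no such simple test is available. A secondary technical point, settled by the reflexivity of $Y$, is that the $X^{\ast}$-valued point evaluations may be indexed by $Y$ itself (rather than by the larger $Y^{\ast\ast}$) and that the $Y^{\ast}$-valued ones come out relatively weakly compact for free; this is what makes the two hypotheses ``$X$ has the $p$-$(SR)$ property'' and ``$Y$ reflexive'' together exactly sufficient.
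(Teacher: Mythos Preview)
Your argument is correct and follows the same route as the paper: both identify $(X\widehat\bigotimes_\pi Y)^{\ast}$ with $K(X,Y^{\ast})$, check that the two families of point evaluations of a $p$-Right set $H$ are relatively weakly compact (one by reflexivity of $Y^{\ast}$, the other by the $p$-$(SR)$ property of $X$), and then appeal to a weak-compactness criterion for spaces of compact operators. The only differences are cosmetic: the paper verifies by hand that $(x\otimes y_n)_n$ is $p$-Right null in $X\widehat\bigotimes_\pi Y$ (checking the weakly $p$-summable and the Dunford--Pettis conditions separately) rather than packaging this as ``adjoints preserve $p$-Right sets'', and for the final step it invokes \cite[Theorem~4]{g20} rather than \cite[Theorem~4.8]{g17}.
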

\begin{proof}
Let $ H $ be a $ p $-Right subset of $ (X \widehat{\bigotimes}_{\pi} Y)^{\ast}\simeq L(X,Y^{\ast}). $\ We claim that $ K $
is relatively weakly compact. We show that the conditions (i) and (ii) of {\rm (\cite[Theorem 4]{g20})} are true.\
Let $ (T_{n}) $ be an arbitrary sequence in $ H .$\ If $ x\in X, $ it is enough to
show that
$ \lbrace T_{n}(x) : n\in \mathbb{N} \rbrace$
is a $ p $-Right subset of $ Y^{\ast}. $\ For this purpose, suppose
that $ (y_{n})_{n} $ is a $ p $-Right null sequence in $ Y. $\ For each $ n\in \mathbb{N}, $ we have:
$$ \langle T_{n} (x) ,y_{n} \rangle=\langle T_{n}, x\otimes y_{n}\rangle .$$
We claim that $ (x \otimes y_{n})_{n}$ is a $ p $-Right null sequence in $ X \widehat{\bigotimes}_{\pi} Y. $\ If $ T\in (X \widehat{\bigotimes}_{\pi} Y)^{\ast}\simeq L(X,Y^{\ast}), $ then
$$ \vert\langle T, x\otimes y_{n}\rangle\vert=\vert \langle T(x), y_{n} \rangle\vert \in \ell_{p},$$
since $ (y_{n}) _{n}$ is weakly $ p $-summable.\ Thus $ (x \otimes y_{n})_{n}$ is weakly $ p $-summable in $ X \widehat {\bigotimes}_{\pi} Y. $\
Let $ (A_{n})_{n} $ be a weakly null sequence in $ (X \widehat {\bigotimes}_{\pi} Y)^{\ast}\simeq L(X,Y^{\ast}). $\ Since the map $\varphi_{x} : L(X,Y^{\ast})\rightarrow Y^{\ast},$
$\varphi_{x}(T) =T(x)$
is linear and bounded, $ (A_{n}(x))_{n} $ is weakly null in $ Y^{\ast}. $\ Since $ (y_{n})_{n} $ is a Dunford-Pettis sequence in $ Y. $\
$$ \vert \langle A_{n}, x\otimes y_{n}\rangle \vert =\vert \langle A_{n}(x), y_{n}\rangle\vert\rightarrow 0.$$
Hence, $ (x\otimes y_{n})_{n} $ is Dunford-Pettis and so, $ (x\otimes y_{n})_{n} $ is
$ p $-Right null in $ X \widehat {\bigotimes}_{\pi} Y.$\ Therefore, the equivalence $ \rm{(i)} $ and $ \rm{(v)} $ in {\rm (\cite[Theorem 3.26]{g9})} implies that
$ \lbrace T_{n}(x) :n\in \mathbb{N}\rbrace$ is a $ p $-Right set in $ Y^{\ast}. $\ Therefore,
$ \lbrace T_{n}(x) :n\in \mathbb{N}\rbrace$ is a relatively weakly compact.\ Now, let $ y\in Y$ and $ (x_{n})_{n} $ be a
$ p $-Right null sequence in $ X. $\ An argument similar to the above one can see that $ (x_{n}\otimes y)_{n} $ is a $ p $-Right null sequence in
$ X \widehat {\bigotimes}_{\pi} Y. $\ Therefore, by reapplying {\rm (\cite[Theorem 3.26]{g9})}
$ \lbrace T^{\ast}_{n} (y):n\in \mathbb{N}\rbrace$ is a $ p $-Right subset of $ X^{\ast}.$\ So, $ \lbrace T^{\ast}_{n} (y):n\in \mathbb{N}\rbrace$ is relatively weakly compact.\ Hence $ H $ is relatively weakly compact.\
\end{proof}
Let $ (X_{n})_{n\in \mathbb{N}} $ be a sequence of Banach spaces.\ If $ 1\leq r <\infty $ the space of all vector-valued sequences $ (\displaystyle\sum_{n=1}^{\infty}\oplus X_{n})_{\ell_{r}} $ is called, the infinite direct sum of $ X_{n} $ in the sense of $ \ell_{r}, $ consisting of all sequences $ x=(x_{n})_{n} $ with values in $X_{n} $ such that $ \Vert x \Vert_{r}=(\displaystyle\sum_{n=1}^{\infty}\Vert x_{n}\Vert^{r} )^{\frac{1}{r}}<\infty.$\ For every $ n\in \mathbb{N},$ we
denote the canonical projection from $ (\displaystyle\sum_{n=1}^{\infty}\oplus X_{n})_{\ell_{r}} $
into $ X_{n} $ by $ \pi_{n} .$\ Also, we denote the canonical
projection from $ (\displaystyle\sum_{n=1}^{\infty}\oplus X^{\ast}_{n})_{\ell_{r^{\ast}}} $
onto $ X^{\ast}_{n} $
by $ P_{n} .$\\

Using the {\rm (\cite[Corollary 3.19]{g9})},and {\rm (\cite[Theorem 3.1]{ccl1})}, we obtain the following result:
\begin{thm}\label{t7}
Let $ 1 < p<\infty $ and $ (X_{n})_{n} $ be a sequence of Banach spaces with $ (DPP_{p}) $ and let $ X=(\sum_{n=1}^{\infty}\oplus X_{n})_{\ell_{p}}.$\
The following are equivalent
for a bounded subset $ K$ of $ X^{\ast}: $\\
$ \rm{(i)} $ $ K $ is a $ p^{\ast} $-Right set.\\
$ \rm{(ii)} $ $ P_{n} (K)$ is a $ p^{\ast} $-Right set for each $ n\in \mathbb{N} $ and
$$ \lim_{n\rightarrow\infty}\sup\lbrace \sum_{k=n}^{\infty}\Vert P_{k}x^{\ast}\Vert^{p^{\ast}} : x^{\ast}\in K\rbrace=0.$$
\end{thm}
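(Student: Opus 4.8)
The plan is to reduce the statement to known characterizations of $p^{\ast}$-Right sets together with the classical block-diagonal structure of $\ell_p$-sums. The implication (ii) $\Rightarrow$ (i) is the routine direction: assuming $P_n(K)$ is a $p^{\ast}$-Right set for each $n$ and that the tails vanish uniformly on $K$, I would take an arbitrary $p^{\ast}$-Right null sequence $(x_k)_k$ in $X=(\sum\oplus X_n)_{\ell_p}$ and show $\sup_{x^{\ast}\in K}|x^{\ast}(x_k)|\to 0$. Writing $x^{\ast}=(P_n x^{\ast})_n\in(\sum\oplus X_n^{\ast})_{\ell_{p^{\ast}}}$ and $x_k=(\pi_n x_k)_n$, one splits $x^{\ast}(x_k)=\sum_{n<N}\langle P_nx^{\ast},\pi_n x_k\rangle+\sum_{n\ge N}\langle P_nx^{\ast},\pi_n x_k\rangle$. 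The tail is controlled by Hölder: $|\sum_{n\ge N}\langle P_nx^{\ast},\pi_n x_k\rangle|\le(\sum_{n\ge N}\|P_kx^{\ast}\|^{p^{\ast}})^{1/p^{\ast}}\|x_k\|$, which is small uniformly in $k$ and $x^{\ast}\in K$ by hypothesis (ii) and boundedness of $(x_k)_k$. For the finite head, note that $\pi_n$ is a bounded operator $X\to X_n$, so $(\pi_n x_k)_k$ is again $p^{\ast}$-Right null in $X_n$ (continuous linear images of $p^{\ast}$-Right null sequences are $p^{\ast}$-Right null, since such images preserve both weak $p^{\ast}$-summability and the Dunford–Pettis character), and since $P_n(K)$ is a $p^{\ast}$-Right set, $\sup_{x^{\ast}\in K}|\langle P_nx^{\ast},\pi_nx_k\rangle|\to 0$ as $k\to\infty$ for each fixed $n<N$; summing finitely many such terms finishes this direction.

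For (i) $\Rightarrow$ (ii), suppose $K$ is a $p^{\ast}$-Right set. First, $P_n(K)$ is a $p^{\ast}$-Right set because $P_n:X^{\ast}\to X_n^{\ast}$ is the adjoint of the canonical inclusion $X_n\hookrightarrow X$ (up to the identification), and continuous linear images of $p^{\ast}$-Right sets are $p^{\ast}$-Right sets — exactly the observation already used in the proof of Theorem~\ref{t5}. The substantive claim is the uniform tail condition
$$\lim_{n\to\infty}\sup\Bigl\{\sum_{k=n}^{\infty}\|P_kx^{\ast}\|^{p^{\ast}}:x^{\ast}\in K\Bigr\}=0.$$
Here I would invoke the cited results: by \cite[Corollary 3.19]{g9}, since each $X_k$ has $(DPP_p)$, a bounded subset of $X_k^{\ast}$ is a $p^{\ast}$-Right set precisely when it is a $p^{\ast}$-$(V)$ set, so that the $p^{\ast}$-Right structure on $X^{\ast}$ is governed by weakly $p^{\ast}$-summable sequences in $X$ alone; and then \cite[Theorem 3.1]{ccl1}, which is the analogous characterization of $p^{\ast}$-$(V)$ subsets of an $\ell_p$-sum dual, supplies exactly the block-tail condition. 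Concretely, I would argue by contradiction: if the tails did not vanish uniformly, there would be $\varepsilon>0$, a strictly increasing $(n_j)$, and $x_j^{\ast}\in K$ with $\sum_{k\ge n_j}\|P_kx_j^{\ast}\|^{p^{\ast}}\ge\varepsilon$; passing to a block-diagonal subsequence and using the $(DPP_p)$ reduction, one manufactures a weakly $p^{\ast}$-summable (hence, via $(DPP_p)$, $p^{\ast}$-Right null) sequence $(x_j)_j$ in $X$ on which $\sup_{x^{\ast}\in K}|x^{\ast}(x_j)|$ stays bounded below by a constant multiple of $\varepsilon$, contradicting that $K$ is a $p^{\ast}$-Right set. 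The standard gliding-hump selection making the blocks almost disjoint is where the $\ell_p$-sum geometry enters.

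The main obstacle is the hard direction (i) $\Rightarrow$ (ii), and within it the uniform tail estimate; this is precisely why the hypothesis "$X_n$ has $(DPP_p)$" is imposed. Without $(DPP_p)$ on the summands, a $p^{\ast}$-Right set need not be a $p^{\ast}$-$(V)$ set, and the gliding-hump construction — which only produces a weakly $p^{\ast}$-summable testing sequence, not an arbitrary Dunford–Pettis weakly $p^{\ast}$-summable one — would fail to detect the tail. So the real content is checking that under $(DPP_p)$ the two cited structural theorems patch together to give the block description, and that the sequences built by gliding humps land in the correct class ($p^{\ast}$-Right null) to be fed against $K$. The bookkeeping with the indices $p$ and its conjugate $p^{\ast}$ (the sum is in $\ell_p$, its dual in $\ell_{p^{\ast}}$, and the notion tested is $p^{\ast}$-Right) must be tracked carefully, but is otherwise routine.
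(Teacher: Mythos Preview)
Your proposal takes essentially the same approach as the paper: the paper offers no detailed argument beyond the sentence ``Using \cite[Corollary~3.19]{g9} and \cite[Theorem~3.1]{ccl1}, we obtain the following result,'' and your sketch is precisely a fleshed-out version of that combination --- reducing $p^{\ast}$-Right sets to $p^{\ast}$-$(V)$ sets via the Dunford--Pettis hypothesis on the summands and then invoking the block characterization of such sets in $\ell_p$-sum duals from \cite{ccl1}. Your direct head/tail verification of (ii)$\Rightarrow$(i) and the gliding-hump outline for the uniform tail condition in (i)$\Rightarrow$(ii) simply supply the details the paper leaves to the reader.
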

\begin{thm}\label{t8}
Let $ 1 < p<\infty $ and $ (X_{n})_{n} $ be a sequence of Banach spaces.\ If $ X=(\displaystyle\sum_{n=1}^{\infty}\oplus X_{n})_{\ell_{p}} $
and $ 1 \leq q < p^{\ast} ,$
then a bounded subset $ K $ of $ X^{\ast} $ is
$ q $-Right set if and only if each $ P_{n}(K) $ is $ q $-Right set.
\end{thm}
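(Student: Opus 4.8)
The plan is to deduce both implications directly from the definition of a $q$-Right set, using the isometric identifications $X^{\ast}\cong\big(\sum_{n=1}^{\infty}\oplus X_{n}^{\ast}\big)_{\ell_{p^{\ast}}}$ and $X^{\ast\ast}\cong\big(\sum_{n=1}^{\infty}\oplus X_{n}^{\ast\ast}\big)_{\ell_{p}}$ (valid since $1<p<\infty$), the canonical inclusions $\iota_{n}\colon X_{n}\hookrightarrow X$ (for which $\iota_{n}^{\ast}=P_{n}$), and the projections $\pi_{n}\colon X\to X_{n}$. For the ``only if'' direction, let $K$ be a $q$-Right subset of $X^{\ast}$, fix $n$, and let $(y_{k})_{k}$ be a $q$-Right null sequence in $X_{n}$. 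Then $(\iota_{n}(y_{k}))_{k}$ is weakly $q$-summable and is a Dunford-Pettis set in $X$ --- both properties are preserved by bounded linear operators --- hence it is $q$-Right null in $X$; since $K$ is a $q$-Right set and $\langle\xi,\iota_{n}(y_{k})\rangle=\langle P_{n}\xi,y_{k}\rangle$, we obtain $\sup_{z^{\ast}\in P_{n}(K)}|z^{\ast}(y_{k})|\to0$, so $P_{n}(K)$ is a $q$-Right set.

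For the converse, assume every $P_{n}(K)$ is a $q$-Right set, put $M=\sup_{\xi\in K}\|\xi\|$, and let $(x_{k})_{k}$ be a $q$-Right null sequence in $X$, written $x_{k}=(x_{k}^{(n)})_{n}$ and $\xi=(\xi^{(n)})_{n}$. Applying Hölder's inequality to the tail of $\langle\xi,x_{k}\rangle=\sum_{n}\langle\xi^{(n)},x_{k}^{(n)}\rangle$ gives, for every $N$,
\[
\sup_{\xi\in K}|\langle\xi,x_{k}\rangle|\ \le\ \sum_{n<N}\ \sup_{z^{\ast}\in P_{n}(K)}|z^{\ast}(x_{k}^{(n)})|\ +\ M\Big(\sum_{n\ge N}\|x_{k}^{(n)}\|^{p}\Big)^{1/p}.
\]
For each fixed $n$, $(x_{k}^{(n)})_{k}=(\pi_{n}x_{k})_{k}$ is weakly $q$-summable and Dunford-Pettis in $X_{n}$, hence $q$-Right null, so it tends to $0$ uniformly on $P_{n}(K)$; thus for each fixed $N$ the finite sum on the right tends to $0$ as $k\to\infty$. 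Consequently $K$ is a $q$-Right set provided the tails are uniformly small, i.e.
\[
\lim_{N\to\infty}\ \sup_{k}\Big(\sum_{n\ge N}\|x_{k}^{(n)}\|^{p}\Big)^{1/p}=0 ,
\]
after which the usual $\varepsilon/2+\varepsilon/2$ argument applies.

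The heart of the matter --- and the main obstacle --- is this last claim, because a bounded, even a weakly convergent, sequence in an $\ell_{p}$-sum need not have uniformly small tails; the Dunford-Pettis property of the set $\{x_{k}:k\in\mathbb{N}\}$ must be used. I would argue by contradiction. If the tails are not uniformly small, a gliding-hump selection produces $\varepsilon_{0}>0$ and strictly increasing sequences $(N_{i})_{i}$, $(k_{i})_{i}$ with $\sum_{N_{i}\le n<N_{i+1}}\|x_{k_{i}}^{(n)}\|^{p}>\varepsilon_{0}^{p}$ (one can keep $k_{i}$ strictly increasing since, for each fixed $k$, $\sum_{n\ge N}\|x_{k}^{(n)}\|^{p}\to0$). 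For each $i$, pick by Hahn--Banach norming functionals on the blocks and set $\eta_{i}\in X^{\ast}$ supported on $[N_{i},N_{i+1})$ with $\eta_{i}^{(n)}(x_{k_{i}}^{(n)})=\|x_{k_{i}}^{(n)}\|^{p}$ and $\|\eta_{i}^{(n)}\|=\|x_{k_{i}}^{(n)}\|^{p-1}$. Since $(p-1)p^{\ast}=p$ (here $1<p<\infty$ is used), $\|\eta_{i}\|^{p^{\ast}}=\sum_{N_{i}\le n<N_{i+1}}\|x_{k_{i}}^{(n)}\|^{p}\le\|x_{k_{i}}\|^{p}$, so $(\eta_{i})_{i}$ is bounded; it is disjointly supported with supports moving to infinity, hence, using $X^{\ast\ast}\cong(\sum\oplus X_{n}^{\ast\ast})_{\ell_{p}}$ and Hölder once more, $\langle\Phi,\eta_{i}\rangle\to0$ for every $\Phi\in X^{\ast\ast}$, i.e. $(\eta_{i})_{i}$ is weakly null in $X^{\ast}$. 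But $\langle\eta_{i},x_{k_{i}}\rangle=\sum_{N_{i}\le n<N_{i+1}}\|x_{k_{i}}^{(n)}\|^{p}>\varepsilon_{0}^{p}$ for all $i$, so $\sup_{k}|\langle\eta_{i},x_{k}\rangle|\not\to0$, contradicting that $\{x_{k}:k\in\mathbb{N}\}$ is a Dunford-Pettis set. This proves the tail estimate and finishes the proof. (The hypothesis $q<p^{\ast}$ is not actually needed in this argument; it is natural here in order to complement, for $q=p^{\ast}$, the finer description obtained in Theorem~\ref{t7}.)
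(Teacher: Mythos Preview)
Your proof is correct and follows a genuinely different route from the paper's.

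The paper argues the nontrivial direction by contradiction: starting from a $q$-Right null sequence $(x_n)$ and $(x_n^\ast)\subset K$ with $|\langle x_n^\ast,x_n\rangle|>\varepsilon_0$, a gliding-hump selection produces block-supported functionals $(y_j^\ast)$ in $X^\ast$ which, by the disjoint-support lemma, are equivalent to the unit basis of $\ell_{p^\ast}$. Pitt's theorem (applied to $\ell_{p^\ast}\to\ell_q$, which is precisely where the hypothesis $q<p^\ast$ enters) together with the characterization of $q$-$(V)$ sets from \cite{ccl1} then forces $(y_j^\ast)$ to be a $q$-$(V)$ set, hence a $q$-Right set, contradicting $|\langle y_j^\ast,y_j\rangle|>\varepsilon_0/2$.

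Your argument is more direct and more elementary: you split $\langle\xi,x_k\rangle$ into a finite head plus a tail, control the head via the hypothesis on $P_n(K)$, and control the tail uniformly in $k$ by showing that any Dunford--Pettis sequence in $(\sum X_n)_{\ell_p}$ has uniformly small $\ell_p$-tails. The latter is obtained by a gliding hump on the \emph{test sequence} side, producing disjointly supported, hence weakly null, functionals $(\eta_i)$ in $X^\ast$ that witness the failure of the Dunford--Pettis condition. No external results (Pitt, \cite{ccl1}) are needed, and---as you correctly note---the restriction $q<p^\ast$ plays no role: your proof yields the equivalence for every $1\le q\le\infty$. The paper's approach, by contrast, makes the bound $q<p^\ast$ essential but has the virtue of being a clean application of classical tools; your approach trades that cleanness for a self-contained argument that is strictly more general.
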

\begin{proof} It can easily seen that
continuous linear images of $ q $-Right set is $ q$-Right set.\ Therefore, we only prove the sufficient part.\ Assume that $ K $ is not a $ q $-Right set.\ Therefore, there exist $ \varepsilon_{0}>0, $ a $ q $-Right null sequence $ (x_{n})_{n} $ in $ X $ and a sequence
$ (x^{\ast}_{n})_{n} $ in $ K $ such that
\begin{center}

$ \vert \langle x^{\ast}_{n}, x_{n} \rangle \vert=\vert \displaystyle\sum _{k=1}^{\infty} \langle P_{k} x^{\ast}_{n} , \pi_{k} x_{n} \rangle \vert > \varepsilon_{0},~~~~~~~~~~~~~~n =1,2,3,...~~~~~~~~~~$ $ ~~~~~~~~~~~~~(\ast )$
\end{center}
By the assumption, we obtain
\begin{center}
$\displaystyle\lim_{n\rightarrow \infty} \vert \langle P_{k} x^{\ast}_{n} ,\pi_{k} x_{n} \rangle \vert=0 , ~~~~~~~~ k =1,2,3,...~~~~$ $ ~~~~~~ (\ast\ast) $
\end{center}

By induction on $ n $ in $ (\ast) $ and $ k$ in $ (\ast\ast), $ there exist two
strictly increasing sequences $ (n_{j})_{j} $ and $ (k_{j} )_{j}$ of positive integers such that
\begin{center}
$ \vert \displaystyle\sum_{k=k_{j-1}+1}^{k_{j}} \langle P_{k} x^{\ast}_{n_{j}} , \pi_{k} x_{n_{j}} \rangle\vert >\frac{\varepsilon_{0}}{2}, ~~~~~~j=1,2,3,...$
\end{center}
For each $ j = 1, 2, ..., $ we consider $ y_{j}=x_{n_{j}} $ and $ y^{\ast}_{j} \in X^{\ast}$
by

\begin{equation*}
P_{k}y^{\ast}_{j}=
\begin{cases}
P_{k_{j}}x^{\ast}_{n_{j}} & \text{if } k_{j-1}+1 \leq k\leq k_{j},\\
0 & \text{otherwise }.
\end{cases}
\end{equation*}
It is clear that $ ( y_{j})_{j} $ is a $q $-Right null sequence in $ X $ such that
\begin{center}
$ \vert\langle y^{\ast}_{j} ,y_{j}\rangle\vert=\vert \displaystyle\sum_{k=k_{j-1}+1}^{k_{j}} \langle P_{k} x^{\ast}_{n_{j}} , \pi_{k} x_{n_{j}} \rangle\vert >\frac{\varepsilon_{0}}{2}, ~~~~~~j=1,2,3,... $
\end{center}
Since the sequence $ ( y^{\ast}_{j})_{j} $
has pairwise disjoint supports, Proposition 6.4.1 of \cite{AlbKal} implies that $ ( y^{\ast}_{j})_{j} $
is equivalent to the unit vector basis $ ( e^{p^{\ast}}_{j})_{j} $ of $ \ell_{p^{\ast}} .$\ Suppose that $ R $ is an isomorphic
embedding from $ \ell_{p^{\ast}} $ into $ X^{\ast} $ such that $ R( e^{p^{\ast}}_{j})=y^{\ast}_{j} (j = 1, 2, ...).$\
Now, let $ T $ be a bounded linear
operator from $\ell_{q^{\ast}} $ into $ X. $\ By Pitt’s Theorem \cite{AlbKal}, the operator $ T^{\ast}\circ R $ is compact
and so the sequence $ (T^{\ast}(y^{\ast}_{j}))_{j}=(T^{\ast}R(e^{\ast}_{j}))_{j} $
is relatively norm compact.\ Hence, Theorem 2.3 of \cite{ccl1} implies that the sequence $ (y^{\ast}_{j}) _{j}$ is a $ q $-$ (V) $ set and so is a
$ q $-Right set.\ Since $ (y_{j})_{j} $
is $ q $-Right null, we have
\begin{center}
$ \vert \langle y^{\ast}_{n}, y_{n}\rangle\vert\leq \sup_{j}\vert \langle y^{\ast}_{j}, y_{n}\rangle\vert\rightarrow 0$ as $ ~~~~~~n\rightarrow \infty, $
\end{center}
which is a contradiction.
\end{proof}
\begin{thm}\label{t9}
Let $ (X_{n})_{n} $ be a sequence of Banach spaces.\ If $ 1 < r <\infty $ and $ 1 \leq p <\infty ,$
then $X=(\displaystyle\sum_{n=1}^{\infty}\oplus X_{n})_{\ell_{r}} $ has the $ p $-$ (SR) $ property if and only if each $ X_{n} $ has the same property.
\end{thm}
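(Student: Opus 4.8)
The plan is to combine the characterisation of the $p$-$(SR)$ property through $p$-Right sets with the isometric identification $X^{\ast}\cong(\sum_{n}\oplus X_n^{\ast})_{\ell_{r^{\ast}}}$ (valid since $1<r<\infty$), and to detect relative weak compactness in this $\ell_{r^{\ast}}$-sum coordinatewise. Throughout I will use the fact, already invoked in the proof of Theorem \ref{t5}, that $p$-Right sets are stable under adjoints: if $R\colon Y\to Z$ is a bounded operator and $L\subseteq Z^{\ast}$ is a $p$-Right set, then $R^{\ast}(L)\subseteq Y^{\ast}$ is a $p$-Right set, because a $p$-Right null sequence $(y_k)_k$ in $Y$ is carried by $R$ to a $p$-Right null sequence in $Z$ (bounded operators preserve weak $p$-summability and send Dunford--Pettis sets to Dunford--Pettis sets), whence $\sup_{z^{\ast}\in L}|\langle R^{\ast}z^{\ast},y_k\rangle|=\sup_{z^{\ast}\in L}|\langle z^{\ast},Ry_k\rangle|\to 0$.

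For the forward implication, fix $m\in\mathbb{N}$ and let $i_m\colon X_m\to X$ and $\pi_m\colon X\to X_m$ be the canonical inclusion and projection, so that $\pi_m i_m=\mathrm{id}_{X_m}$, $P_m=i_m^{\ast}$, and $i_m^{\ast}\pi_m^{\ast}=\mathrm{id}_{X_m^{\ast}}$. Given a $p$-Right set $K$ in $X_m^{\ast}$, the set $\pi_m^{\ast}(K)$ is a $p$-Right set in $X^{\ast}$ by the stability fact applied to $\pi_m$, hence relatively weakly compact since $X$ has the $p$-$(SR)$ property; applying the bounded operator $P_m=i_m^{\ast}$ and using $i_m^{\ast}\pi_m^{\ast}=\mathrm{id}$ gives that $K=P_m(\pi_m^{\ast}(K))$ is relatively weakly compact, so $X_m$ has the $p$-$(SR)$ property. (Alternatively one may argue through Corollary \ref{c3}: a Dunford--Pettis $p$-convergent $T\colon X_m\to Y$ yields a Dunford--Pettis $p$-convergent $T\pi_m\colon X\to Y$, so $(T\pi_m)^{\ast}=\pi_m^{\ast}T^{\ast}$ is weakly compact and therefore $T^{\ast}=i_m^{\ast}\pi_m^{\ast}T^{\ast}$ is weakly compact.)

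For the converse the crucial ingredient is an auxiliary lemma which I would establish first: \emph{since $1<r^{\ast}<\infty$, a bounded subset $W$ of $X^{\ast}=(\sum_{n}\oplus X_n^{\ast})_{\ell_{r^{\ast}}}$ is relatively weakly compact if and only if $P_n(W)$ is relatively weakly compact in $X_n^{\ast}$ for every $n$.} For the nontrivial direction, given a sequence $(w^{(k)})_k$ in $W$ one extracts, by a diagonal argument, a subsequence along which $P_nw^{(k)}$ converges weakly to some $w_n\in X_n^{\ast}$ for every $n$; weak lower semicontinuity of the norm shows that $w:=(w_n)_n$ lies in $X^{\ast}$ with $\|w\|\le\sup_k\|w^{(k)}\|$; and for an arbitrary $\varphi=(\varphi_n)_n\in X^{\ast\ast}=(\sum_{n}\oplus X_n^{\ast\ast})_{\ell_{r}}$ one splits $\langle\varphi,w^{(k)}\rangle=\sum_n\langle\varphi_n,P_nw^{(k)}\rangle$ into its finite head, which converges coordinatewise, and its tail, which is uniformly small because $(\sum_{n>N}\|\varphi_n\|^{r})^{1/r}\to 0$ as $N\to\infty$ while $\sup_k\|w^{(k)}\|<\infty$; Eberlein--\v{S}mulian then yields that $W$ is relatively weakly compact. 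Granting the lemma, the converse is immediate: if $K$ is a $p$-Right set in $X^{\ast}$, then since $P_n=i_n^{\ast}$ is the adjoint of the bounded inclusion $i_n\colon X_n\to X$, each $P_n(K)$ is a $p$-Right set in $X_n^{\ast}$ by the stability fact, hence relatively weakly compact because $X_n$ has the $p$-$(SR)$ property; the lemma then gives that $K$ is relatively weakly compact, so $X$ has the $p$-$(SR)$ property.

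The main obstacle is the auxiliary lemma, and specifically the point that for $\ell_{r^{\ast}}$-sums with $1<r^{\ast}<\infty$ no uniform tail condition is needed beyond coordinatewise relative weak compactness; this is exactly where the hypothesis $r>1$ (equivalently $r^{\ast}<\infty$) enters, and it also explains why the statement fails for $r=1$, since, for instance, $\{e_n\}_n\subseteq\ell_1$ has (finite, hence) relatively weakly compact coordinate projections but is not relatively weakly compact. The remaining ingredients — the duality $X^{\ast}\cong(\sum_{n}\oplus X_n^{\ast})_{\ell_{r^{\ast}}}$ with $P_n=i_n^{\ast}$, and the stability of $p$-Right sets under adjoints — are routine.
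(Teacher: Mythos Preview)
Your argument is correct and follows essentially the same route as the paper's: in both directions you work with $p$-Right sets and their stability under the adjoints $P_n=i_n^{\ast}$ and $\pi_n^{\ast}$, and for the converse you reduce relative weak compactness of $K\subseteq X^{\ast}=(\sum_n\oplus X_n^{\ast})_{\ell_{r^{\ast}}}$ to that of each $P_n(K)$. The only real difference is that the paper dispatches this last reduction by citing Lemma~3.4 of \cite{ccl1}, whereas you supply a direct proof of the auxiliary lemma via a diagonal extraction and the tail estimate coming from $(\|\varphi_n\|)_n\in\ell_r$; your observation that the hypothesis $1<r<\infty$ (so $r^{\ast}<\infty$) is exactly what makes the tail uniformly small is the same content as that cited lemma, made explicit.
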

\begin{proof} It is clear that if $ X $ has the $ p$-$ (SR) $ property, then each $ X_{n} $ has the $ p $-$ (SR)$ property.\ Conversely,
let $ K$ be a $ p $-Right subset of $ X^{\ast}. $\ Since continuous linear images of $ p $-Right sets are $ p $-Right sets, each $ P_{n}(K) $ is
also a $ p $-Right set.\ Since $ X_{n} $ has the $ p $-$(SR) $ property for each $ n\in\mathbb{N}, $ each $ P_{n}(K) $ is relatively weakly compact.\ It
follows from Lemma 3.4 \cite{ccl1} that $ K$ is relatively weakly compact.\
\end{proof}
\begin{prop}\label{p3}
Let $ (X_{n})_{n} $ be a sequence of Banach spaces.\ If $ 1 < r <\infty $ and $ 1 \leq p <\infty ,$
then each $ X_{n} $ has the $ p $-$ (SR^{\ast}) $ property if and only if $X=(\displaystyle\sum_{n=1}^{\infty}\oplus X_{n})_{\ell_{r}} $ has the same property.
\end{prop}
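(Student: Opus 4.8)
The plan is to mimic the proof of Theorem~\ref{t9}, with $p$-Right$^{\ast}$ sets of the space itself playing the role of the $p$-Right sets of the dual. The first ingredient I would record is the routine stability fact that continuous linear images of $p$-Right$^{\ast}$ sets are $p$-Right$^{\ast}$ sets: if $S\in L(X,Z)$ and $A\subseteq X$ is a $p$-Right$^{\ast}$ set, then, since any bounded operator preserves weak $p$-summability of sequences and carries Dunford-Pettis sets to Dunford-Pettis sets, the adjoint $S^{\ast}$ sends every $p$-Right null sequence $(z^{\ast}_{n})_{n}$ in $Z^{\ast}$ to a $p$-Right null sequence $(S^{\ast}z^{\ast}_{n})_{n}$ in $X^{\ast}$, whence $\sup_{x\in A}|z^{\ast}_{n}(Sx)|=\sup_{x\in A}|(S^{\ast}z^{\ast}_{n})(x)|\to 0$. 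Running this computation with the canonical isometric embedding $j_{n}\colon X_{n}\to X$ and the canonical projection $\pi_{n}\colon X\to X_{n}$, which exhibit $X_{n}$ as a $1$-complemented subspace of $X$, shows that the $p$-$(SR^{\ast})$ property passes to complemented subspaces; in particular, if $X$ has $p$-$(SR^{\ast})$ then so does every $X_{n}$, which settles the ``only if'' direction.

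For the ``if'' direction, assume each $X_{n}$ has $p$-$(SR^{\ast})$ and let $K$ be a $p$-Right$^{\ast}$ subset of $X=(\sum_{n=1}^{\infty}\oplus X_{n})_{\ell_{r}}$. By the stability fact applied to $\pi_{n}$, each $\pi_{n}(K)$ is a $p$-Right$^{\ast}$ set in $X_{n}$, hence relatively weakly compact by hypothesis. It then remains to pass from ``all coordinate projections relatively weakly compact'' to ``$K$ relatively weakly compact'', which is exactly the weak-compactness criterion for $\ell_{r}$-sums with $1<r<\infty$---the very Lemma~3.4 of \cite{ccl1} already invoked in the proof of Theorem~\ref{t9}---asserting that a bounded subset of $(\sum_{n=1}^{\infty}\oplus X_{n})_{\ell_{r}}$ all of whose coordinate projections are relatively weakly compact is itself relatively weakly compact. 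Applying it yields that $K$ is relatively weakly compact, so $X$ has $p$-$(SR^{\ast})$.

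The only genuinely delicate point, which I expect to be the main obstacle and would therefore cite rather than reprove, is that $\ell_{r}$-sum criterion, and in particular the role of the hypothesis $1<r<\infty$: from a sequence in $K$ one extracts, via Eberlein--\v{S}mulian and a diagonal argument, a subsequence $(x^{(j)})_{j}$ with $\pi_{n}x^{(j)}$ converging weakly in $X_{n}$ to some $y_{n}$ for every $n$; weak lower semicontinuity of the norm, together with the estimate $\sum_{n\in F}\|\pi_{n}x^{(j)}\|^{r}\le\sup_{k\in K}\|k\|^{r}$ over all finite sets $F$, forces $y=(y_{n})_{n}\in X$; and testing against $\phi=(\phi_{n})_{n}\in X^{\ast}\cong(\sum_{n=1}^{\infty}\oplus X_{n}^{\ast})_{\ell_{r^{\ast}}}$ one splits $\phi(x^{(j)}-y)$ into a finite head that vanishes coordinatewise and a tail dominated, via H\"older's inequality, by $2(\sup_{k\in K}\|k\|)(\sum_{n>N}\|\phi_{n}\|^{r^{\ast}})^{1/r^{\ast}}$, which is small precisely because $r^{\ast}<\infty$. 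For $r=1$ this last control breaks down, which is exactly why the hypothesis $1<r<\infty$ cannot be dropped.
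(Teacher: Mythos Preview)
Your proposal is correct and follows essentially the same route as the paper: the ``only if'' direction is handled via the complemented-subspace observation, and the ``if'' direction projects a $p$-Right$^{\ast}$ set $K$ onto each coordinate via $\pi_{n}$, uses that continuous linear images of $p$-Right$^{\ast}$ sets are $p$-Right$^{\ast}$ to get relative weak compactness of each $\pi_{n}(K)$, and then invokes Lemma~3.4 of \cite{ccl1} to conclude. The only difference is that you spell out the stability of $p$-Right$^{\ast}$ sets under bounded operators and sketch the cited lemma, whereas the paper leaves both as clear/cited.
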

\begin{proof}
It is clear that if $X=(\displaystyle\sum_{n=1}^{\infty}\oplus X_{n})_{\ell_{r}} $ has the $ p $-$ (SR^{\ast})$ roperty, then each $ X_{n} $ has this property.\ Conversely,
let $ K$ be a $ p $-Right$ ^{\ast} $ subset of $ X. $\ It is clear that each $ \pi_{n}(K) $ is
also a $ p $-Right set.\ Since $ X_{n} $ has the $ p $-$(SR^{\ast}) $ property for each $ n\in\mathbb{N}, $ each $ \pi_{n}(K) $ is relatively weakly compact.\ It
follows from Lemma 3.4 \cite{ccl1} that $ K$ is relatively weakly compact.\
\end{proof}

Suppose that $ K $ is a bounded subset of Banach space $ X. $\ For $ 1\leq p\leq \infty, $
we set
\begin{center}
$\vartheta_{p}(K)=\inf\lbrace \hat{d}(A,K) : K\subset X^{\ast}$ is a $ p $-Right$ ^{\ast} $ set $ \rbrace. $
\end{center}
We can conclude that $ \vartheta_{p}(K)=0 $ if and only if $K\subset X$ is a $ p $-Right$ ^{\ast} $ set.\
For a bounded linear operator
$ T : X \rightarrow Y , $ we denote $ \vartheta_{p} (T(B_{X})) $ by $ \vartheta_{p} (T).$\\
The proof of the following theorem is similar to the proof of Theorem \ref{t4}, so its proof is omitted.\
\begin{thm}\label{t10}
Let $ X $ be a Banach space and $ 1\leq p < q\leq \infty. $\ The following statements are equivalent:\\
$\rm{(i)}$ For every Banach space $ Y, $ if for every Banach space $ Y, $ if $ T : Y \rightarrow X $ is an operator such that $ T^{\ast} $ is a Dunford-Pettis $ p $-convergent operator,
then $ T $ is a weakly $ q $-precompact (weakly $ q $-compact, $ q $-compact),\\
$\rm{(ii)}$ Same as $\rm{(i)}$ with $ Y=\ell_{\infty} ,$\\
$\rm{(iii)}$ Every $ p$-Right$ ^{\ast} $ subset of $ X^{\ast} $ is weakly $ q $-precompact (relatively weakly $q $-compact, $ q $-compact).
\end{thm}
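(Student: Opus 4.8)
I would prove Theorem~\ref{t10} by transcribing the proof of Theorem~\ref{t4}, systematically interchanging an operator with its adjoint and replacing $p$-Right sets and Lemma~\ref{l1}(i) by their starred counterparts, $p$-Right$^{\ast}$ sets and Lemma~\ref{l1}(iii). Exactly as in Theorem~\ref{t4}, it suffices to treat the relatively weakly $q$-compact case; the weakly $q$-precompact and $q$-compact cases follow by the same argument with ``subsequence'' read in the corresponding sense.

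The implication (i)~$\Rightarrow$~(ii) is the trivial specialization to $Y=\ell_{\infty}$. For (iii)~$\Rightarrow$~(i), let $Y$ be arbitrary and let $T:Y\rightarrow X$ be an operator whose adjoint $T^{\ast}$ is Dunford-Pettis $p$-convergent. By Lemma~\ref{l1}(iii) this is precisely the statement that $T(B_{Y})$ is a $p$-Right$^{\ast}$ set, so hypothesis (iii) applies and yields that $T(B_{Y})$ is relatively weakly $q$-compact; by definition $T$ is then weakly $q$-compact, which is the conclusion of (i).

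The substance of the proof is (ii)~$\Rightarrow$~(iii). Here I would start from a $p$-Right$^{\ast}$ subset $K$ of $X^{\ast}$ and an arbitrary sequence $(x_{n}^{\ast})_{n}$ in $K$, and attach to it an operator built from $\ell_{\infty}$ in the same fashion as the map $x\mapsto(x_{n}^{\ast}(x))_{n}$ of Theorem~\ref{t4}, arranged so that the coordinate vectors $e_{n}^{1}$ recover the prescribed sequence $(x_{n}^{\ast})_{n}$. The defining property of $K$ --- that $\lim_{n}\sup_{x^{\ast}\in K}|x_{n}^{\ast\ast}(x^{\ast})|=0$ for every $p$-Right null sequence $(x_{n}^{\ast\ast})_{n}$ in $X^{\ast\ast}$ --- is exactly the estimate forcing the adjoint of this operator to be Dunford-Pettis $p$-convergent, playing the role that $\lim_{n}\|T x_{n}\|=\lim_{n}\sup_{m}|x_{m}^{\ast}(x_{n})|=0$ plays in Theorem~\ref{t4}. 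Once this adjoint is Dunford-Pettis $p$-convergent, hypothesis (ii) guarantees that the operator is weakly $q$-compact, so reading off the coordinate vectors exhibits $(x_{n}^{\ast})_{n}$ inside a relatively weakly $q$-compact set and hence with a weakly $q$-convergent subsequence. As $(x_{n}^{\ast})_{n}\subseteq K$ was arbitrary, $K$ is relatively weakly $q$-compact.

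The step I expect to be the main obstacle is this construction in (ii)~$\Rightarrow$~(iii): one must choose the $\ell_{\infty}$-operator so that its coordinate vectors return the chosen sequence in $X^{\ast}$ while simultaneously converting the $p$-Right$^{\ast}$ condition on $K$ into Dunford-Pettis $p$-convergence of the appropriate adjoint. The two auxiliary facts invoked --- that continuous linear images of $p$-Right$^{\ast}$ sets are again $p$-Right$^{\ast}$, and that an operator is weakly $q$-compact exactly when the image of its unit ball is relatively weakly $q$-compact --- are routine and are used verbatim as in Theorem~\ref{t4}.
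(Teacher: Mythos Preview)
Your overall strategy---dualize the proof of Theorem~\ref{t4} using Lemma~\ref{l1}(iii) in place of Lemma~\ref{l1}(i)---is exactly what the paper does: it omits the proof and says it is similar to that of Theorem~\ref{t4}. The implications (i)~$\Rightarrow$~(ii) and (iii)~$\Rightarrow$~(i) go through precisely as you describe.

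The obstacle you flag in (ii)~$\Rightarrow$~(iii), however, is not a genuine mathematical difficulty but an artifact of two misprints in the stated theorem, and your attempt to work around the statement as written cannot succeed. Condition~(iii) should read ``every $p$-Right$^{\ast}$ subset of $X$'' (not of $X^{\ast}$), and condition~(ii) should take $Y=\ell_{1}$ (not $\ell_{\infty}$); this is confirmed by Corollary~\ref{c6}, which is the special case $q=\infty$ and uses $Y=\ell_{1}$ and the $p$-$(SR^{\ast})$ property (a condition on subsets of $X$). With the statement as printed there is no way to produce an operator $T:\ell_{\infty}\to X$ whose coordinate images recover a prescribed sequence in $X^{\ast}$, which is why your construction remained vague.

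With the corrected statement the step is routine and exactly parallels Theorem~\ref{t4}. Let $K\subseteq X$ be $p$-Right$^{\ast}$ and $(x_{n})_{n}\subseteq K$. Define $T:\ell_{1}\to X$ by $T((a_{n}))=\sum_{n} a_{n}x_{n}$, so that $T(e_{n}^{1})=x_{n}$ and $T^{\ast}(x^{\ast})=(x^{\ast}(x_{n}))_{n}\in\ell_{\infty}$. For any $p$-Right null sequence $(x_{n}^{\ast})_{n}$ in $X^{\ast}$ the $p$-Right$^{\ast}$ hypothesis on $K$ gives
\[
\lim_{n}\Vert T^{\ast}(x_{n}^{\ast})\Vert=\lim_{n}\sup_{m}\vert x_{n}^{\ast}(x_{m})\vert=0,
\]
so $T^{\ast}$ is Dunford-Pettis $p$-convergent. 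Hypothesis~(ii) then makes $T$ weakly $q$-compact, and since $(x_{n})_{n}=(T(e_{n}^{1}))_{n}\subseteq T(B_{\ell_{1}})$ it has a weakly $q$-convergent subsequence, which is~(iii).
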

\begin{cor}\label{c6}
Let $ X $ be a Banach space and $ 1\leq p < \infty. $\ The following statements are equivalent:\\
$\rm{(i)}$ For every Banach space $ Y, $ if $ T : Y \rightarrow X $ is an operator such that $ T^{\ast} $ is a Dunford-Pettis $ p $-convergent operator,
then $ T $ is weakly compact,\\
$\rm{(ii)}$ Same as $\rm{(i)}$ with $ Y=\ell_{1} ,$\\
$\rm{(iii)}$ $ X$ has the $ p $-$(SR^{\ast}) $ property,\\
$ \rm{(iv)} $ $ \omega(T^{\ast})\leq \vartheta_{p}(T^{\ast}) $ for every operator $ T$ from $ X$ into any Banach space $ Y, $\\
$ \rm{(v)} $ $ \omega(K) \leq \vartheta_{p}(K)$ for every bounded subset $ K$ of $ X. $
\end{cor}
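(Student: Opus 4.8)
The plan is to transport the proof of Corollary~\ref{c3} from $p$-Right sets to $p$-Right$^{\ast}$ sets, using Lemma~\ref{l1}(iii) and (vi) in place of Lemma~\ref{l1}(i) and (ii); equivalently, the block (i)--(iii) is Theorem~\ref{t10} read in the ``relatively weakly $q$-compact'' case at $q=\infty$ (recall that weakly $\infty$-compact means weakly compact and relatively weakly $\infty$-compact means relatively weakly compact), combined with Definition~\ref{d1}(ii). I would prove the cycle (i) $\Rightarrow$ (ii) $\Rightarrow$ (iii) $\Rightarrow$ (i) and then (iii) $\Leftrightarrow$ (v) $\Leftrightarrow$ (iv).

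For (i) $\Rightarrow$ (iii): if $K\subseteq X$ is a $p$-Right$^{\ast}$ set, Lemma~\ref{l1}(vi) gives a Banach space $Y$ and $T\in L(Y,X)$ with $T$ and $T^{\ast}$ Dunford--Pettis $p$-convergent and $K\subseteq T(B_{Y})$; by (i) the operator $T$ is weakly compact, so $T(B_{Y})$, hence $K$, is relatively weakly compact. The implication (i) $\Rightarrow$ (ii) is immediate. For (iii) $\Rightarrow$ (i): if $T\in L(Y,X)$ and $T^{\ast}$ is Dunford--Pettis $p$-convergent, then $T(B_{Y})$ is a $p$-Right$^{\ast}$ set in $X$ by Lemma~\ref{l1}(iii), and (iii) makes it relatively weakly compact, i.e.\ $T$ is weakly compact. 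The step requiring attention is (ii) $\Rightarrow$ (iii): given a $p$-Right$^{\ast}$ set $K$ and a sequence $(x_{n})_{n}$ in $K$, define $T:\ell_{1}\to X$ by $T((a_{n})_{n})=\sum_{n}a_{n}x_{n}$ and let $C$ be the closed absolutely convex hull of $\{x_{n}:n\in\mathbb{N}\}$. Then $C$ is again a $p$-Right$^{\ast}$ set, since for every $p$-Right null sequence $(x^{\ast}_{m})_{m}$ in $X^{\ast}$ one has $\sup_{x\in C}|x^{\ast}_{m}(x)|=\sup_{n}|x^{\ast}_{m}(x_{n})|\le\sup_{x\in K}|x^{\ast}_{m}(x)|\to 0$, so $T(B_{\ell_{1}})\subseteq C$ is a $p$-Right$^{\ast}$ set, whence $T^{\ast}$ is Dunford--Pettis $p$-convergent by Lemma~\ref{l1}(iii). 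Now (ii) forces $T$ to be weakly compact, so $(x_{n})_{n}=(T(e_{n}))_{n}$ has a weakly convergent subsequence; by the Eberlein--\v{S}mulian theorem $K$ is relatively weakly compact.

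For the quantitative part I would use that $\vartheta_{p}(K)=0$ exactly when $K$ is a $p$-Right$^{\ast}$ set, that $\omega(A)=0$ exactly when $A$ is relatively weakly compact, and the triangle-type estimate $\omega(K)\le\omega(A)+\hat{d}(K,A)$ for all bounded sets (which follows from $d(k,A)+\hat{d}(A,B)\ge d(k,B)$). If (iii) holds, then $\omega(A)=0$ for every $p$-Right$^{\ast}$ set $A$, so $\omega(K)\le\hat{d}(K,A)$ for every bounded $K$ and every such $A$, and taking the infimum over $A$ gives $\omega(K)\le\vartheta_{p}(K)$, which is (v). Conversely, if (v) holds and $K$ is a $p$-Right$^{\ast}$ set, then $\omega(K)\le\vartheta_{p}(K)=0$, so $K$ is relatively weakly compact and (iii) holds. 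Finally (iv) $\Leftrightarrow$ (v) is the standard passage between arbitrary bounded subsets of $X$ and images $T(B_{Y})$ of unit balls: (v) $\Rightarrow$ (iv) specializes $K=T(B_{Y})$, and for the converse one writes a bounded $K$ as a subset of a set $T(B_{Y})$ with $T$ as in Lemma~\ref{l1}(vi) and uses that $\omega$ and $\vartheta_{p}$ are monotone under inclusion and unchanged by passing to closed absolutely convex hulls.

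I expect the main obstacle to be (ii) $\Rightarrow$ (iii), precisely the reduction that lets a single operator out of $\ell_{1}$ replace the possibly non-separable space $\ell_{1}(K)$ supplied by Lemma~\ref{l1}(vi): this is why one passes to the countable set $\{x_{n}\}$ and checks that its closed absolutely convex hull is still a $p$-Right$^{\ast}$ set. Once this stability property is in hand, everything else is routine bookkeeping; a secondary point to be careful about is the exact orientation of $\hat{d}$ in the definition of $\vartheta_{p}$, so that the triangle estimate for $\omega$ is applied on the correct side.
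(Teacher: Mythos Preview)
Your proposal is correct and mirrors exactly what the paper intends: Corollary~\ref{c6} is stated without proof as the $p$-Right$^{\ast}$ analogue of Corollary~\ref{c3}, obtained from Theorem~\ref{t10} at $q=\infty$ together with Definition~\ref{d1}(ii), and you have filled in precisely those details, including the $\ell_{1}$-reduction for (ii)$\Rightarrow$(iii) and the triangle estimate for the quantitative equivalences. Your caution about (iv) is warranted---as printed, $T:X\to Y$ gives $T^{\ast}(B_{Y^{\ast}})\subseteq X^{\ast}$, where $\vartheta_{p}$ is not defined; the intended reading (which you correctly use) is $\omega(T)\le\vartheta_{p}(T)$ for $T:Y\to X$.
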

\begin{cor}\label{c7}
If $ X ^{\ast} $ has the $ p $-$ (DPrcP)$ and $ Y$ has the $ p $-$ (SR^{\ast}) $ property, then $ L(X,Y) =W(X,Y).$\
\end{cor}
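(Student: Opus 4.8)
The plan is to prove that every $T\in L(X,Y)$ is weakly compact, so that $L(X,Y)\subseteq W(X,Y)$ and hence (the reverse inclusion being automatic) $L(X,Y)=W(X,Y)$. The route is through the adjoint: I would first show that $T^{\ast}:Y^{\ast}\rightarrow X^{\ast}$ is Dunford-Pettis $p$-convergent, then read off from Lemma \ref{l1}\,$\rm{(iii)}$ that $T(B_{X})$ is a $p$-Right$^{\ast}$ subset of $Y$, and finally use the $p$-$(SR^{\ast})$ property of $Y$ to conclude that $T(B_{X})$ is relatively weakly compact.

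For the first step, let $(y^{\ast}_{n})_{n}$ be a $p$-Right null sequence in $Y^{\ast}$, i.e. a Dunford-Pettis and weakly $p$-summable sequence. Since $T^{\ast}$ is a bounded linear operator, $(T^{\ast}y^{\ast}_{n})_{n}$ is weakly $p$-summable in $X^{\ast}$. It is also a Dunford-Pettis set: if $(w^{\ast}_{n})_{n}$ is weakly null in $X^{\ast\ast}$, then $(T^{\ast\ast}w^{\ast}_{n})_{n}$ is weakly null in $Y^{\ast\ast}$, and since $\{y^{\ast}_{k}:k\in\mathbb{N}\}$ is a Dunford-Pettis set, $\sup_{k}|\langle w^{\ast}_{n},T^{\ast}y^{\ast}_{k}\rangle|=\sup_{k}|\langle T^{\ast\ast}w^{\ast}_{n},y^{\ast}_{k}\rangle|\rightarrow 0$. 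Thus $(T^{\ast}y^{\ast}_{n})_{n}$ is a Dunford-Pettis weakly $p$-summable sequence in $X^{\ast}$; because $X^{\ast}$ has the $p$-$(DPrcP)$, it must be norm null. Hence $T^{\ast}\in DPC_{p}(Y^{\ast},X^{\ast})$.

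Now Lemma \ref{l1}\,$\rm{(iii)}$ (applied to $T$) gives that $T(B_{X})$ is a $p$-Right$^{\ast}$ subset of $Y$, and since $Y$ has the $p$-$(SR^{\ast})$ property this set is relatively weakly compact, i.e. $T$ is weakly compact, completing the argument. One could equally avoid Lemma \ref{l1}\,$\rm{(iii)}$: by Lemma \ref{l1}\,$\rm{(iv)}$ the hypothesis on $X^{\ast}$ means every bounded subset of $X$, in particular $B_{X}$, is a $p$-Right$^{\ast}$ set, and the same computation as above shows that a continuous linear image of a $p$-Right$^{\ast}$ set is a $p$-Right$^{\ast}$ set, so $T(B_{X})$ is $p$-Right$^{\ast}$ in $Y$. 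The only step that is more than bookkeeping — and the one I would double-check — is the stability of the Dunford-Pettis property of a bounded sequence under a bounded linear map, which is what lets $p$-Right null sequences be pushed from $Y^{\ast}$ to $X^{\ast}$; all the rest is a direct chaining of the lemmas already established.
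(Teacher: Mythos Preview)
Your proof is correct and follows essentially the same route as the paper: show that $T^{\ast}\in DPC_{p}(Y^{\ast},X^{\ast})$ by pushing a $p$-Right null sequence through $T^{\ast}$ and invoking the $p$-$(DPrcP)$ of $X^{\ast}$, then conclude that $T$ is weakly compact from the $p$-$(SR^{\ast})$ property of $Y$. The only cosmetic difference is that the paper packages the last step as an appeal to Corollary~\ref{c6}, whereas you unpack it via Lemma~\ref{l1}\,$\rm{(iii)}$ and the definition of $p$-$(SR^{\ast})$ (which is precisely the content underlying Corollary~\ref{c6}); your explicit verification that bounded linear images preserve the Dunford-Pettis property of a sequence is just a fuller justification of what the paper states in one line.
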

\begin{proof}
It can easily seen that continuous linear image each $ p $-Right null sequence is a $ p $-Right null sequence.\ Therefore, if $ T\in L(X,Y) $ and $ (y^{\ast}_{n})_{n} $ is a $ p $-Right null sequence in $ Y^{\ast} ,$ then $ (T^{\ast}(y^{\ast}_{n}))_{n} $ is a $ p $-Right null sequence in $ X^{\ast} .$\ Since $ X ^{\ast} $ has the $ p $-$ (DPrcP),$ $ \Vert T^{\ast}(y^{\ast}_{n}) \Vert\rightarrow 0$ and so, $ T^{\ast}\in DPC_{p}(Y^{\ast}, X^{\ast}). $\ Hence, Corollary \ref{c6} implies that $ T\in W(X, Y) .$\
\end{proof}

\section{$ (p,q) $-sequentially Right property on Banach spaces}
In this section, motivated by the class $\mathcal{P}_{p, q}$ in \cite{CAN} for those Banach spaces in which relatively $p$-compact sets are
relatively $q$-compact, we introduce the concepts of properties $(SR)_{p,q}$ and $(SR^{\ast})_{p,q}$ in order to find a
condition which every Dunford-Pettis $ q $-convergent operator is Dunford-Pettis $p$-convergent.

\begin{defn}\label{d2}
We say that $ X $ has the
$ (p,q) $-sequentially Right property (in short $ X $ has the $(SR)_{p,q}$ property), if each $ p$-Right set in $ X^{\ast} $ is a $ q $-Right set in $ X^{\ast} .$\
\end{defn}
\begin{defn}\label{d3} We say that $ X $ has the
$ (p,q) $-sequentially Right property (in short $ X $ has the $(SR^{\ast})_{p,q}$ property), if each $ p$-Right$ ^{\ast} $ set in $ X $ is a $q $-Right$^{\ast} $ set in $ X .$\
\end{defn}
From Definitions \ref{d2} and \ref{d3}, we have the following result. Since its proof is
obvious, the proof is omitted.
\begin{prop}\label{p4} If $ X^\ast $ has the $ (SR)_{p,q} $ property,
then $ X $ has the $ (SR^{\ast})_{p,q} $ property.\
\end{prop}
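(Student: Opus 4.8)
The plan is to unwind both properties to statements about operators and then invoke Proposition~\ref{p4}'s natural dual pair, namely Lemma~\ref{l1}(ii) and Lemma~\ref{l1}(v). Assume $X^{\ast}$ has the $(SR)_{p,q}$ property, i.e.\ every $p$-Right subset of $X^{\ast\ast}$ is a $q$-Right subset of $X^{\ast\ast}$. To show $X$ has the $(SR^{\ast})_{p,q}$ property, let $K$ be a $p$-Right$^{\ast}$ subset of $X$; I must show $K$ is a $q$-Right$^{\ast}$ subset of $X$.

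First I would form the evaluation map $E:X^{\ast}\rightarrow B(K)$ given by $E(x^{\ast})(x)=x^{\ast}(x)$. By Lemma~\ref{l1}(v), since $K$ is a $p$-Right$^{\ast}$ set in $X$, the operator $E$ is Dunford--Pettis $p$-convergent. Next, I would transfer this to a statement about $E^{\ast}$: exactly as in the proof of Lemma~\ref{l1}(v), $E^{\ast}$ maps the unit ball $S$ of $B(K)^{\ast}$ onto a $p$-Right set in $X^{\ast}$, and moreover, writing $\delta_{k}$ for the point mass at $k\in K$, one has $E^{\ast}(\{\delta_{k}:k\in K\})=K$, so $K$ itself, viewed inside $X^{\ast\ast}$ via the canonical embedding, is contained in the $p$-Right subset $E^{\ast}(S)$ of $X^{\ast}$. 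Wait --- the cleaner route avoids this: instead, observe directly that $E$ being Dunford--Pettis $p$-convergent means, via Lemma~\ref{l1}(ii) applied with the roles of $X$ and the bounded set shifted, that $K$ (as a bounded subset of $X=(X^{\ast})^{\ast}$... not quite). Let me keep the first route: the image $E^{\ast}(S)\subseteq X^{\ast}$ is a $p$-Right set, hence by hypothesis it is a $q$-Right subset of $X^{\ast}$.

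Now I reverse the unwinding at level $q$. Since $E^{\ast}(S)$ is a $q$-Right subset of $X^{\ast}$, and $K\subseteq E^{\ast}(S)$ (identifying $K$ with $\{\delta_k\}$ pushed into $X^{\ast}$ ... ), the subset $K$ is a $q$-Right set; then running Lemma~\ref{l1}(ii)/(v) in the other direction, $E$ is Dunford--Pettis $q$-convergent, and by Lemma~\ref{l1}(v) again this says precisely that $K$ is a $q$-Right$^{\ast}$ subset of $X$. Concretely: if $(x^{\ast}_{n})_{n}$ is a $q$-Right null sequence in $X^{\ast}$, then $\Vert E(x^{\ast}_{n})\Vert=\sup_{x\in K}\vert x^{\ast}_{n}(x)\vert\to 0$ because $K$ is a $q$-Right set, which is the defining condition of a $q$-Right$^{\ast}$ set. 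Hence $X$ has the $(SR^{\ast})_{p,q}$ property.

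The only genuinely delicate point --- and the one I would check carefully --- is the identification $K\subseteq E^{\ast}(S)$ inside $X^{\ast}$ and the fact that a subset of a $q$-Right set is again a $q$-Right set; the latter is immediate from the defining supremum condition, and the former is exactly the computation $E^{\ast}(\delta_k)=\widehat{k}$ already used in Lemma~\ref{l1}(v). Everything else is a formal passage back and forth through the evaluation-map characterizations, so the argument is short; since the paper explicitly says "its proof is obvious," I would present only these two lines of unwinding rather than belabor the routine estimates.
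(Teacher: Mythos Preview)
Your argument works, but there is a persistent slip: you repeatedly write $X^{\ast}$ where you mean $X^{\ast\ast}$. The evaluation map $E:X^{\ast}\to B(K)$ has adjoint $E^{\ast}:B(K)^{\ast}\to X^{\ast\ast}$, so $E^{\ast}(S)$ and $K=E^{\ast}(\{\delta_{k}:k\in K\})$ sit in $X^{\ast\ast}$, not $X^{\ast}$. This is exactly what the hypothesis requires, since ``$X^{\ast}$ has $(SR)_{p,q}$'' is a statement about $p$-Right subsets of $(X^{\ast})^{\ast}=X^{\ast\ast}$. Once the target space is corrected, your chain through Lemma~\ref{l1}(i) and (v) is sound.

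That said, the paper omits the proof as obvious, and the evaluation-map machinery is not needed. The direct route is the observation --- immediate from the definitions --- that a bounded subset $K\subseteq X$ is a $p$-Right$^{\ast}$ set in $X$ if and only if its canonical image in $X^{\ast\ast}$ is a $p$-Right subset of $X^{\ast\ast}$: both conditions say precisely that $\sup_{x\in K}\vert x^{\ast}_{n}(x)\vert\to 0$ for every $p$-Right null sequence $(x^{\ast}_{n})_{n}$ in $X^{\ast}$. With this identification the implication is one line: $K$ $p$-Right$^{\ast}$ in $X$ $\Longleftrightarrow$ $K$ $p$-Right in $X^{\ast\ast}$ $\Longrightarrow$ (hypothesis) $K$ $q$-Right in $X^{\ast\ast}$ $\Longleftrightarrow$ $K$ $q$-Right$^{\ast}$ in $X$. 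Your detour through $E$ and $E^{\ast}$ ultimately reproduces exactly this identification (since $E^{\ast}(\delta_{k})=\widehat{k}$), so the two approaches coincide in substance; the direct version is what the paper intends by ``obvious.''
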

\begin{thm}\label{t11}
Let $ 1\leq p < q \leq \infty.$\ The following statements are equivalent:
\begin{enumerate}
\item[(i)] $X$ has the $ (SR)_{p,q}$ property.\
\item [(ii)] $DPC_p(X, Y)\subseteq DPC_q(X, Y)$, for every Banach space $ Y.$\
\item [(iii)] Same as $ \rm{(ii)} $ for $ Y=\ell_{\infty}.$
\end{enumerate}
\end{thm}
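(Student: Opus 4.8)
The plan is to run the standard chain of implications $(i)\Rightarrow(ii)\Rightarrow(iii)\Rightarrow(i)$, mimicking the pattern already used in Theorem \ref{t4} and Corollary \ref{c3}, with Lemma \ref{l1}(i) doing the translation between operators and $p$-Right sets of adjoint balls.

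For $(i)\Rightarrow(ii)$, let $Y$ be an arbitrary Banach space and let $T\in DPC_p(X,Y)$. By Lemma \ref{l1}(i), $T^{\ast}(B_{Y^{\ast}})$ is a $p$-Right subset of $X^{\ast}$. Since $X$ has the $(SR)_{p,q}$ property, $T^{\ast}(B_{Y^{\ast}})$ is then a $q$-Right subset of $X^{\ast}$. Applying Lemma \ref{l1}(i) once more (now with $q$ in place of $p$) gives that $T$ is Dunford-Pettis $q$-convergent, i.e.\ $T\in DPC_q(X,Y)$. The implication $(ii)\Rightarrow(iii)$ is trivial, taking $Y=\ell_\infty$.

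The substantive direction is $(iii)\Rightarrow(i)$. Here I would argue contrapositively, or directly: let $K$ be a $p$-Right subset of $X^{\ast}$; we must show $K$ is a $q$-Right set. As in the proof of Theorem \ref{t4}, define $T:X\to\ell_\infty$ by fixing a sequence $(x_n^{\ast})_n$ in $K$ and setting $T(x)=(x_n^{\ast}(x))_n$; since $K$ is a $p$-Right set, $\Vert T(x_k)\Vert=\sup_n|x_n^{\ast}(x_k)|\to 0$ for every $p$-Right null sequence $(x_k)_k$ in $X$, so $T\in DPC_p(X,\ell_\infty)$. By $(iii)$, $T\in DPC_q(X,\ell_\infty)$, which means $\sup_n|x_n^{\ast}(y_k)|\to 0$ for every $q$-Right null sequence $(y_k)_k$ in $X$. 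To conclude that $K$ itself is a $q$-Right set one wants this uniform convergence over all of $K$, not merely over the chosen sequence; the usual device is that if $K$ were not a $q$-Right set there would be a $q$-Right null sequence $(y_k)_k$ in $X$, an $\varepsilon_0>0$, and points $x_k^{\ast}\in K$ with $|x_k^{\ast}(y_k)|>\varepsilon_0$, and then feeding exactly this sequence $(x_k^{\ast})_k$ into the construction of $T$ above yields $\Vert T(y_k)\Vert\ge|x_k^{\ast}(y_k)|>\varepsilon_0$, contradicting $T\in DPC_q(X,\ell_\infty)$. This diagonal/extraction step is the only place requiring care.

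The main obstacle, then, is purely bookkeeping: making sure the operator $T$ built from a putative "bad" sequence in $K$ is genuinely bounded (it is, since $K$ is bounded) and that $q$-convergence of $T$ contradicts the chosen lower bound. Everything else is a direct application of Lemma \ref{l1}(i) for the two values $p$ and $q$. I would present the argument in the order $(i)\Rightarrow(ii)\Rightarrow(iii)\Rightarrow(i)$, noting in passing that the case $q=\infty$ needs no separate treatment since $\infty$-Right sets are just Right sets and the same operator-theoretic characterization holds.
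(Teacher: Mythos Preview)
Your proposal is correct and follows essentially the same route as the paper's proof: the implications $(i)\Rightarrow(ii)$ and $(ii)\Rightarrow(iii)$ are identical, and for $(iii)\Rightarrow(i)$ the paper also builds, from a sequence $(x_n^{\ast})$ in $K$, the operator $X\to\ell_\infty$, $x\mapsto (x_n^{\ast}(x))_n$ (it reaches it as $T^{\ast}|_X$ for $T:\ell_1\to X^{\ast}$, $T(b)=\sum b_n x_n^{\ast}$, which is the same map), checks it lies in $DPC_p$, applies (iii), and concludes that every sequence in $K$ is a $q$-Right set---the diagonal extraction you spell out is exactly what underlies the paper's final line ``since $(x_n^{\ast})_n$ is an arbitrary sequence in $K$, $K$ is a $q$-Right set.''
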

\begin{proof}
(i) $ \Rightarrow $ (ii) If $ T\in DPC_{p}(X,Y),$ then the part $ \rm{(i)} $ of Lemma \ref{l1} implies that $ T^{\ast}(B_{Y^{\ast}}) $ is a $ p $-Right set.\ Since $X$ has the $ (SR)_{p,q}$ property, $ T^{\ast}(B_{Y^{\ast}}) $ is a $ q $-Right set.\
Therefore, the part $ \rm{(i)} $ of Lemma \ref{l1} yields that $ T\in DPC_{q}(X,Y).$\\
(ii) $ \Rightarrow $ (iii) is obvious.\\
(iii) $ \Rightarrow $ (i)
Suppose that $ K$ is a $ p$-Right set in $X^{\ast} $ and let $ (x_{n}^{\ast})_{n} $
is an arbitrary sequence in $ K. $\
Assume that $ T:\ell_{1}\rightarrow X^{\ast}$
is defined by $ T(b_{n})=\sum_{n=1}^{\infty} b_{n}x_{n}^{\ast} .$\
It is clear that $ T^{\ast}(x) =( x_{i}^{\ast}(x ))_{i}$ for all $x\in X.$\ Suppose that the sequence $ (x_{n})_{n} $ is a $ p $-Right null sequence in $ X. $\
Since, $ K $ is a $ p$-Right set, we have
$$ \displaystyle \lim_{n}\sup_{i}\vert x_{i}^{\ast}(x_{n}) \vert =0.$$\
Therefore, $ \displaystyle \lim_{n}\Vert T^{\ast}(x_{n}) \Vert= 0.$\ Hence, $ T^{\ast}_{\vert_{X}} $
is a Dunford-Pettis $ p$-convergent operator and so by the assumption $ T^{\ast}_{\vert_{X}} $
is a Dunford-Pettis $ q$-convergent operator.\ Now, assume that $ (x_{n})_{n} $ is a $ q$-Right null sequence in $ X$ and $y\in B_{\ell_{1}}.
$\ Hence,
$$ \vert T(y)(x_{n}) \vert = \vert T^{\ast}(x_{n})(y) \vert\leq \Vert T^{\ast}(x_{n}) \Vert\rightarrow 0 .$$\ Therefore, $ T(B_{\ell_{1}}) $ is a $ q$-Right set in $ X^{\ast}$ which
follows that $ (x_{n}^{\ast})_{n} $ is also a $ q$-Right set in $ X^{\ast}.$\ Since $ (x_{n}^{\ast})_{n} $ is an arbitrary sequence in $K,$ $K$ is a $ q$-Right set.\ Thus, $ X$ has the $(SR)_{p,q}$ property.\
\end{proof}
\begin{cor}\label{c8}
If every $ p $-Right set in $ X^{\ast} $ is relatively compact, then $ X$ has the $ (SR)_{p,q}$ property.\
\end{cor}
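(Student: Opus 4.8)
The plan is to unwind the definition: I must show that every $p$-Right subset $K$ of $X^{\ast}$ is a $q$-Right set, since that is precisely what Definition \ref{d2} asks for. By hypothesis such a $K$ is relatively norm compact, so it is enough to establish the auxiliary fact that \emph{every relatively norm compact subset of $X^{\ast}$ is a $q$-Right set}; the corollary then follows at once.

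To prove the auxiliary fact I would start from an arbitrary $q$-Right null sequence $(x_{n})_{n}$ in $X$. The only features of it I need are that it is bounded, say $\sup_{n}\Vert x_{n}\Vert = M$, and weakly null (automatic, since a weakly $q$-summable sequence is weakly null for every $q\leq\infty$); the Dunford--Pettis part of the definition of a $q$-Right null sequence is not even used. Given $\varepsilon>0$, I would cover $\overline{K}$ by finitely many $\varepsilon$-balls centred at $x_{1}^{\ast},\dots,x_{m}^{\ast}$, use $x_{i}^{\ast}(x_{n})\to 0$ to pick $N$ with $\max_{1\leq i\leq m}|x_{i}^{\ast}(x_{n})|<\varepsilon$ for $n\geq N$, and then for any $x^{\ast}\in K$ and $n\geq N$ choose $i$ with $\Vert x^{\ast}-x_{i}^{\ast}\Vert<\varepsilon$ and estimate $|x^{\ast}(x_{n})|\leq |x_{i}^{\ast}(x_{n})|+\Vert x^{\ast}-x_{i}^{\ast}\Vert\,\Vert x_{n}\Vert<(1+M)\varepsilon$. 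Letting $\varepsilon\to 0$ gives $\sup_{x^{\ast}\in K}|x^{\ast}(x_{n})|\to 0$, so $K$ is a $q$-Right set, and hence $X$ has the $(SR)_{p,q}$ property.

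There is no genuine obstacle here; the heart of the matter is the standard fact that a weakly null bounded sequence converges uniformly to zero on norm-compact sets. An alternative route, closer to the language of Section~4, is to combine Lemma~\ref{l1}(i) with Theorem~\ref{t11}: for $T\in DPC_{p}(X,Y)$ the set $T^{\ast}(B_{Y^{\ast}})$ is $p$-Right, hence relatively compact, so $T^{\ast}$ (and therefore $T$) is compact; a compact operator is completely continuous, hence $q$-convergent, hence Dunford--Pettis $q$-convergent, so $DPC_{p}(X,Y)\subseteq DPC_{q}(X,Y)$ for every $Y$, and Theorem~\ref{t11} yields the conclusion. Either way the content is entirely elementary, so I would present only the shorter direct argument.
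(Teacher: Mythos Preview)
Your proposal is correct. The paper states Corollary~\ref{c8} without proof, immediately after Theorem~\ref{t11}, so the intended argument is precisely your alternative route: if $T\in DPC_{p}(X,Y)$ then $T^{\ast}(B_{Y^{\ast}})$ is $p$-Right by Lemma~\ref{l1}(i), hence relatively compact by hypothesis, so $T$ is compact and therefore Dunford--Pettis $q$-convergent; Theorem~\ref{t11} then gives the $(SR)_{p,q}$ property.

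Your primary argument---showing directly from the definition that every relatively norm compact subset of $X^{\ast}$ is a $q$-Right set via the standard finite $\varepsilon$-net estimate---is equally valid and in fact more self-contained, since it avoids the detour through operator characterizations. It also makes transparent that the Dunford--Pettis half of ``$q$-Right null'' plays no role here: relative norm compactness already forces uniform convergence of any bounded weakly null sequence, so the conclusion is that relatively compact sets are $q$-Right (indeed $q$-$(V)$) sets for every $q$. Either route is fine; the operator-theoretic one aligns with the paper's presentation, while yours is the shorter elementary proof.
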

\begin{cor}\label{c9}
Let $ 1\leq p < q \leq \infty.$\ The following statements hold.
\begin{enumerate}
\item[(i)] If $ X$ has both properties $ (SR)_{p,q}$ and $ p $-$ (DPrcP) $, then $X$ has the $ q $-$ (DPrcP). $\
\item [(ii)] If $ X^{\ast\ast} $ has both properties $ (SR)_{p,q}$ and $ p $-$ (DPrcP) $, then $X$ has the $ q $-$ (DPrcP). $\
\item[(iii)] If $ X$ has the $ p $-$ (SR ) ,$ then $ X$ has the $ (SR)_{p,q}$ property.\
\end{enumerate}
\end{cor}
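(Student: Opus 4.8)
The plan is to treat the three parts separately; (i) and (ii) will follow quickly from Theorem~\ref{t11}, while (iii) rests on one observation about relatively weakly compact subsets of a dual space. For (i), I would first record the tautology that a Banach space $Z$ has the $p$-$(DPrcP)$ exactly when $id_Z\in DPC_p(Z,Z)$, since both statements say precisely that every Dunford--Pettis weakly $p$-summable sequence in $Z$ is norm null. Then, assuming $X$ has the $(SR)_{p,q}$ property, Theorem~\ref{t11} applied with $Y=X$ gives $DPC_p(X,X)\subseteq DPC_q(X,X)$; combined with $id_X\in DPC_p(X,X)$ (the $p$-$(DPrcP)$ hypothesis) this yields $id_X\in DPC_q(X,X)$, i.e.\ $X$ has the $q$-$(DPrcP)$.

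For (ii), I would apply part (i) to the bidual, so that $X^{\ast\ast}$ has the $q$-$(DPrcP)$, and then use that this property passes to closed subspaces, applied to the canonical isometric copy of $X$ in $X^{\ast\ast}$. The subspace stability is routine: if $(z_n)_n$ is Dunford--Pettis and weakly $q$-summable in a closed subspace $Z$ of a space $W$ with the $q$-$(DPrcP)$, then $(z_n)_n$ stays weakly $q$-summable in $W$ (restrict functionals from $W^{\ast}$ to $Z^{\ast}$) and stays a Dunford--Pettis set in $W$ (a weakly null sequence in $W^{\ast}$ restricts to a weakly null sequence in $Z^{\ast}$, the restriction map being bounded, hence weak-to-weak continuous), so it is norm null; hence $X$ has the $q$-$(DPrcP)$.

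For (iii), the key step is the claim that every relatively weakly compact subset $K$ of $X^{\ast}$ is a $q$-Right set; granting it, the $p$-$(SR)$ property makes every $p$-Right subset of $X^{\ast}$ relatively weakly compact, hence a $q$-Right set, so $X$ has the $(SR)_{p,q}$ property. To prove the claim, let $(x_n)_n$ be a $q$-Right null sequence in $X$ (in particular a Dunford--Pettis, weakly null sequence) and argue by contradiction: if $\sup_{x^{\ast}\in K}|x^{\ast}(x_n)|\not\to0$, pass to a subsequence with $|x_n^{\ast}(x_n)|\geq\varepsilon$ for some $\varepsilon>0$ and $x_n^{\ast}\in K$, and then, invoking the Eberlein--\v{S}mulian theorem on the relatively weakly compact set $K$, to a further subsequence along which $x_n^{\ast}\to x^{\ast}$ weakly in $X^{\ast}$. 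Since $(x_n^{\ast}-x^{\ast})_n$ is weakly null in $X^{\ast}$ it converges uniformly to zero on the Dunford--Pettis set $\{x_m:m\in\mathbb{N}\}$, so $(x_n^{\ast}-x^{\ast})(x_n)\to0$; together with $x^{\ast}(x_n)\to0$ this forces $x_n^{\ast}(x_n)\to0$, a contradiction. The claim in (iii) is the only step with genuine content; everything else is bookkeeping around Theorem~\ref{t11} and standard permanence of the $q$-$(DPrcP)$, so that is where I expect the work to lie.
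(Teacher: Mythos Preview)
Your proof is correct. Parts (i) and (ii) are essentially identical to the paper's argument: the paper also invokes Theorem~\ref{t11} for (i) (phrased with a general operator $T:X\to Y$ rather than $id_X$, but the content is the same) and then passes from $X^{\ast\ast}$ to $X$ for (ii), simply asserting the subspace stability that you spell out.

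For (iii) your route differs from the paper's. You work at the level of sets: you prove directly, via Eberlein--\v{S}mulian and the definition of Dunford--Pettis sets, that every relatively weakly compact subset of $X^{\ast}$ is a $q$-Right set, and then chain $p$-Right $\Rightarrow$ relatively weakly compact $\Rightarrow$ $q$-Right. The paper instead works at the level of operators: given $T\in DPC_p(X,Y)$, it uses Lemma~\ref{l1}(i) to see that $T^{\ast}(B_{Y^{\ast}})$ is a $p$-Right set, hence relatively weakly compact by the $p$-$(SR)$ hypothesis, so $T$ is weakly compact; it then invokes the Peralta--Villanueva--Wright--Ylinen characterization (weakly compact $\Leftrightarrow$ Right-to-norm sequentially continuous) to conclude that $T$ is Dunford--Pettis completely continuous, hence Dunford--Pettis $q$-convergent, and finishes with Theorem~\ref{t11}. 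Your argument is more self-contained and elementary, since it avoids the Right-topology machinery and in fact proves the slightly stronger statement that relatively weakly compact subsets of $X^{\ast}$ are Right sets (not merely $q$-Right). The paper's argument, on the other hand, ties the result back to the operator-ideal viewpoint that pervades the rest of the section.
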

\begin{proof}
$ \rm{(i)} $ Suppose that $ T : X \rightarrow Y $ is a bounded linear operator.\ Since $X$ has the $ p $-$ (DPrcP) $, then $T\in DPC_p(X, Y).$\ On the other hand, $ X$ has property $ (SR)_{p,q},$ thus by Theorem \ref{t11}, $ T\in DPC_q(X, Y)$.\ Thus, $X$ has the $ q $-$ (DPrcP) .$\\
$ \rm{(ii)} $ By part (i), $ X^{\ast\ast}$ has the $ q $-$ (DPrcP).$\ Hence, $ X$ has the $ q $-$ (DPrcP) .$\\
$ \rm{(iii)} $ Let $Y$ be a Banach space and $ T \in DPC_p (X, Y).$\ From part $ \rm{(i)} $ of Lemma \ref{l1}, $T^{\ast}(B_{Y^{\ast}})$ is a $p$-Right set.\ Since $ X $ has the $ {p}$-$(SR) $ property, $T^{\ast}(B_{Y^{\ast}})$ is relatively weakly compact.\ It is clear that $T$ is weakly compact and so, $ T$ is Dunford-Pettis completely continuous.\ Thus, $ T $ is Dunford-Pettis $ q $-convergent.\ Hence,
by Theorem \ref{t11} $ X $ has the $ (SR)_{p,q}$ property.
\end{proof}
\begin{thm}\label{t12}
Let $ 1 <p \leq\infty$. The following statements are equivalent.
\begin{enumerate}
\item[(i)] $ X$ has the $ p $-$ (DPrcP). $
\item[(ii)] $ X $ has the $ (SR)_{1,p} $ property and
$ X$ contains no isomorphic copy of $ c_{0}.$
\end{enumerate}
\end{thm}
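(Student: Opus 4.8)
The plan is to reduce both implications to two facts. First, $X$ has the $p$-$(DPrcP)$ precisely when every $p$-Right null sequence in $X$ is norm null, equivalently when $id_X\in DPC_p(X,X)$. Second, by the Bessaga--Pe\l czy\'{n}ski theorem, $X$ contains no isomorphic copy of $c_0$ precisely when every weakly unconditionally Cauchy series in $X$ converges unconditionally, i.e.\ when every weakly $1$-summable sequence in $X$ is norm null.

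For $\mathrm{(i)}\Rightarrow\mathrm{(ii)}$: if $X$ has the $p$-$(DPrcP)$, then for any bounded $K\subseteq X^{\ast}$ and any $p$-Right null sequence $(x_n)_n$ we have $\sup_{x^{\ast}\in K}|x^{\ast}(x_n)|\le(\sup_{x^{\ast}\in K}\|x^{\ast}\|)\,\|x_n\|\to 0$, so every bounded subset of $X^{\ast}$ is a $p$-Right set; in particular every $1$-Right set in $X^{\ast}$ is a $p$-Right set, which is the $(SR)_{1,p}$ property. For the absence of $c_0$, I would argue by contradiction: an isomorphic embedding $j:c_0\hookrightarrow X$ sends the unit vector basis $(e_n)_n$ to a seminormalized sequence; since weakly null sequences in $\ell_1=c_0^{\ast}$ are norm null, $\{e_n\}$ is a Dunford--Pettis set in $c_0$, and since Dunford--Pettis sets and weakly $p$-summable sequences are preserved by bounded operators, $(je_n)_n$ is a Dunford--Pettis, weakly $p$-summable, non-norm-null sequence in $X$, contradicting the $p$-$(DPrcP)$.

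For $\mathrm{(ii)}\Rightarrow\mathrm{(i)}$: since $X$ contains no copy of $c_0$, every weakly $1$-summable sequence in $X$ — a fortiori every $1$-Right null sequence — is norm null, which says exactly that $\sup_{x^{\ast}\in B_{X^{\ast}}}|x^{\ast}(x_n)|\to 0$ for every $1$-Right null sequence $(x_n)_n$; hence $B_{X^{\ast}}$ is a $1$-Right set. Applying the $(SR)_{1,p}$ hypothesis, $B_{X^{\ast}}$ is then a $p$-Right set, so for every $p$-Right null sequence $(x_n)_n$ in $X$ we get $\|x_n\|=\sup_{x^{\ast}\in B_{X^{\ast}}}|x^{\ast}(x_n)|\to 0$; that is, $X$ has the $p$-$(DPrcP)$. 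Equivalently, one may note that $id_X\in DPC_1(X,X)$ by the no-$c_0$ hypothesis and then invoke Theorem \ref{t11} with $Y=X$ to conclude $id_X\in DPC_p(X,X)$.

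I do not anticipate a serious obstacle: everything rests on the Bessaga--Pe\l czy\'{n}ski dichotomy together with the elementary stability of Dunford--Pettis sets and weakly $p$-summable sequences under bounded operators. The one point needing a moment's care — and the reason $c_0$ is exactly the obstruction — is that the image of the $c_0$-basis is automatically a Dunford--Pettis set, so the failure of the $p$-$(DPrcP)$ it produces cannot be circumvented.
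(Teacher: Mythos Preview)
Your proposal is correct and follows essentially the same route as the paper. For $\mathrm{(ii)}\Rightarrow\mathrm{(i)}$ the paper does exactly what you do: the absence of $c_0$ gives the $1$-Schur property (the paper cites \cite{dm}, you invoke Bessaga--Pe\l czy\'nski directly), hence $B_{X^{\ast}}$ is a $1$-Right set, and then $(SR)_{1,p}$ upgrades this to a $p$-Right set, yielding the $p$-$(DPrcP)$; for $\mathrm{(i)}\Rightarrow\mathrm{(ii)}$ the paper merely writes ``obvious'', and your explicit $c_0$-basis argument is a perfectly standard way to unpack that.
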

\begin{proof}
(i) $ \Rightarrow $ (ii) is obvious.\\
(ii) $ \Rightarrow $ (i) Since $ X $ contains no isomorphic copy of $ c_{0} ,$ $X$ has the $ 1 $-Schur property (see, Theorem 2.4 in \cite{dm}) and so has the $ 1 $-$ (DPrcP). $\ Hence,
$B_{X^{\ast}} $ is
$1$-Right subset of $ X^{\ast}. $\ Since
$ X$ has the $ (SR)_{1,p} $ property, $ B_{X^{\ast}} $ is a $ p $-Right set.\ It is easy to verify that
$ X $ has the $ p $-$ (DPrcP). $
\end{proof}

In the sequel, we characterize property $ (SR^{\ast})_{p,q}.$\ Since
the proof of the following theorem is similar to the proof of Theorem \ref{t11}, its proof is omitted.\
\begin{thm}\label{t13}Let $ 1\leq p <q\leq \infty.$\
The following statements are equivalent.\
\begin{enumerate}
\item[(i)] $ X $ has the $ (SR^{\ast})_{p,q}$ property.
\item[(ii)] $ DPC_{p} (X^{\ast},Y^{\ast})\subseteq DPC_{q}(X^{\ast},Y^{\ast}),$
for every Banach space $ Y . $
\item[(iii)] Same as $\rm{ (ii)} $ for $ Y=\ell_{1}.$
\end{enumerate}
\end{thm}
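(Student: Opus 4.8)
The plan is to repeat the proof of Theorem \ref{t11} verbatim, with $p$-Right sets in $X^{\ast}$ replaced by $p$-Right$^{\ast}$ sets in $X$ and part (i) of Lemma \ref{l1} replaced by part (iii). The implication (ii) $\Rightarrow$ (iii) is the trivial specialisation $Y=\ell_{1}$. For (i) $\Rightarrow$ (ii): given $S\in DPC_{p}(X^{\ast},Y^{\ast})$, I would first treat the case where $S$ is an adjoint, say $S=T^{\ast}$ with $T\in L(Y,X)$. By Lemma \ref{l1}(iii), $T(B_{Y})$ is a $p$-Right$^{\ast}$ subset of $X$; the $(SR^{\ast})_{p,q}$ hypothesis on $X$ makes it a $q$-Right$^{\ast}$ subset of $X$; and a second application of Lemma \ref{l1}(iii) gives $S=T^{\ast}\in DPC_{q}(X^{\ast},Y^{\ast})$.

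The main work is in (iii) $\Rightarrow$ (i). Let $K$ be a $p$-Right$^{\ast}$ subset of $X$ and let $(x_{n})_{n}$ be an arbitrary sequence in $K$. Put $T:\ell_{1}\rightarrow X$, $T((b_{n})_{n})=\sum_{n}b_{n}x_{n}$; this converges absolutely and defines a bounded operator because $K$ is bounded, and its adjoint is $T^{\ast}:X^{\ast}\rightarrow\ell_{\infty}=\ell_{1}^{\ast}$, $T^{\ast}(x^{\ast})=(x^{\ast}(x_{n}))_{n}$, so that $\Vert T^{\ast}(x^{\ast})\Vert=\sup_{n}\vert x^{\ast}(x_{n})\vert$. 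Since $K$ is $p$-Right$^{\ast}$, $\Vert T^{\ast}(x^{\ast}_{m})\Vert\rightarrow 0$ for every $p$-Right null sequence $(x^{\ast}_{m})_{m}$ in $X^{\ast}$, that is, $T^{\ast}\in DPC_{p}(X^{\ast},\ell_{1}^{\ast})$; by (iii), $T^{\ast}\in DPC_{q}(X^{\ast},\ell_{1}^{\ast})$, which says precisely that $\sup_{n}\vert x^{\ast}_{m}(x_{n})\vert\rightarrow 0$ for every $q$-Right null sequence $(x^{\ast}_{m})_{m}$; that is, $\lbrace x_{n}:n\in\mathbb{N}\rbrace$ is a $q$-Right$^{\ast}$ set in $X$. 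To upgrade this to ``$K$ is a $q$-Right$^{\ast}$ set'' I would argue by contradiction: if $K$ were not $q$-Right$^{\ast}$, then after passing to a subsequence there would exist $\varepsilon>0$, a $q$-Right null sequence $(x^{\ast}_{m})_{m}$ in $X^{\ast}$ and points $x_{m}\in K$ with $\vert x^{\ast}_{m}(x_{m})\vert>\varepsilon$; but then the sequence $(x_{m})_{m}$ lies in $K$ and is not a $q$-Right$^{\ast}$ set, contradicting the previous paragraph. Hence every $p$-Right$^{\ast}$ subset of $X$ is a $q$-Right$^{\ast}$ subset, that is, $X$ has the $(SR^{\ast})_{p,q}$ property.

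The routine ingredients are the boundedness of $T$ and the stability of $q$-Right null sequences (equivalently, of Dunford-Pettis weakly $q$-summable sequences) under passing to subsequences, which is all that the contradiction step needs. The step I expect to be the real obstacle is (i) $\Rightarrow$ (ii) for an operator $S:X^{\ast}\rightarrow Y^{\ast}$ that is \emph{not} an adjoint. Here the natural move is to apply Lemma \ref{l1}(i) to $S$, getting that $S^{\ast}(B_{Y^{\ast\ast}})$ is a $p$-Right subset of $X^{\ast\ast}$, and then to use that a bounded subset of $X$ is $p$-Right$^{\ast}$ in $X$ exactly when it is $p$-Right in $X^{\ast\ast}$ (this rests on the canonical norm-one projection $X^{\ast\ast\ast}\rightarrow X^{\ast}$ sending $p$-Right null sequences to $p$-Right null sequences). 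The delicate point is that $S^{\ast}(B_{Y^{\ast\ast}})$ need not be contained in $X$, so one must either reduce to the weakly compact / adjoint situation or deduce the uniform convergence of $q$-Right null sequences on $S^{\ast}(B_{Y^{\ast\ast}})$ directly from the $(SR^{\ast})_{p,q}$ hypothesis; this bidual bookkeeping is where I would concentrate the effort.
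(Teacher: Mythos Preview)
Your overall plan coincides with what the paper suggests: the paper omits the proof entirely and merely says it is ``similar to the proof of Theorem~\ref{t11}''. Your treatment of $(\mathrm{ii})\Rightarrow(\mathrm{iii})$ and $(\mathrm{iii})\Rightarrow(\mathrm{i})$ is exactly the intended transcription, using Lemma~\ref{l1}(iii) in place of Lemma~\ref{l1}(i) and the operator $T:\ell_{1}\to X$, $T((b_{n}))=\sum_{n}b_{n}x_{n}$, whose adjoint $T^{\ast}\in L(X^{\ast},\ell_{\infty})=L(X^{\ast},\ell_{1}^{\ast})$ is the operator to which hypothesis (iii) is applied. The countable-to-full passage by contradiction is routine.

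The obstacle you flag in $(\mathrm{i})\Rightarrow(\mathrm{ii})$ for a non-adjoint $S:X^{\ast}\to Y^{\ast}$ is a genuine gap, and the paper does not close it either. Observe that, since every Banach space embeds isometrically into some $\ell_{\infty}(\Gamma)=\ell_{1}(\Gamma)^{\ast}$, condition (ii) as written is equivalent to $DPC_{p}(X^{\ast},Z)\subseteq DPC_{q}(X^{\ast},Z)$ for \emph{all} Banach spaces $Z$; by Theorem~\ref{t11} applied with $X^{\ast}$ in place of $X$, this is exactly the $(SR)_{p,q}$ property for $X^{\ast}$. Thus the implication $(\mathrm{i})\Rightarrow(\mathrm{ii})$ amounts to the converse of Proposition~\ref{p4}, which the paper states only as a one-way implication. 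Your bidual bookkeeping correctly identifies the obstruction: for a general $S$ one has $S^{\ast}(B_{Y^{\ast\ast}})\subset X^{\ast\ast}$ a $p$-Right set, but the hypothesis $(SR^{\ast})_{p,q}$ only upgrades $p$-Right sets lying in $X$ to $q$-Right sets, and there is no mechanism in the paper to handle the part of $S^{\ast}(B_{Y^{\ast\ast}})$ outside $X$. In short, the ``similar to Theorem~\ref{t11}'' proof works cleanly if (ii) is read as a statement about adjoint operators $T^{\ast}$ with $T\in L(Y,X)$ (and that restricted version is all that is needed for the chain $(\mathrm{ii})\Rightarrow(\mathrm{iii})\Rightarrow(\mathrm{i})$), but as literally stated the implication $(\mathrm{i})\Rightarrow(\mathrm{ii})$ requires more than the paper provides.
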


\begin{cor} \label{c10}
If $ X^{\ast}$ has the $ p $-$ (SR) $ property,
then
$ X$ has the $ (SR^{\ast})_{p,q}$ property.
\end{cor}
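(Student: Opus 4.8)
The plan is to obtain the corollary by chaining two facts already proved in this section, via the standard device of applying a general result to the dual space $X^{\ast}$ in place of $X$. First I would apply Corollary \ref{c9}$\,\rm{(iii)}$ --- ``if a Banach space has the $p$-$(SR)$ property then it has the $(SR)_{p,q}$ property'' --- with the space $X^{\ast}$ playing the role of $X$. Since $X^{\ast}$ has the $p$-$(SR)$ property by hypothesis, this gives that $X^{\ast}$ has the $(SR)_{p,q}$ property. Then Proposition \ref{p4}, which states that $X^{\ast}$ having the $(SR)_{p,q}$ property implies that $X$ has the $(SR^{\ast})_{p,q}$ property, immediately yields the conclusion.

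As an alternative self-contained route, I could argue through the operator characterization of Theorem \ref{t13}: it suffices to verify $DPC_{p}(X^{\ast},\ell_{\infty})\subseteq DPC_{q}(X^{\ast},\ell_{\infty})$, this being condition $\rm{(iii)}$ there (the case $Y=\ell_{1}$, so that $Y^{\ast}\cong\ell_{\infty}$). Given $T\in DPC_{p}(X^{\ast},\ell_{\infty})$, Lemma \ref{l1}$\,\rm{(i)}$ applied with $X^{\ast}$ in place of $X$ shows that $T^{\ast}(B_{\ell_{\infty}^{\ast}})$ is a $p$-Right subset of $X^{\ast\ast}$; since $X^{\ast}$ has the $p$-$(SR)$ property this set is relatively weakly compact, so $T^{\ast}$, and hence $T$, is weakly compact. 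A weakly compact operator is Dunford-Pettis completely continuous, hence Dunford-Pettis $q$-convergent, so $T\in DPC_{q}(X^{\ast},\ell_{\infty})$, and Theorem \ref{t13} completes the proof.

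I would record the first route, since it is a two-line deduction from earlier results. The only delicate point --- and essentially the sole ``obstacle'' --- is observing that Corollary \ref{c9} is stated for an arbitrary Banach space, so that substituting $X^{\ast}$ for $X$ is legitimate; once this is granted, neither argument involves any genuine difficulty.
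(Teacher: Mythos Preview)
Your proposal is correct. Both routes work, and your first one is genuinely shorter than the paper's argument.

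The paper argues differently: it takes an arbitrary $T\in L(Y,X)$ with $T^{\ast}$ Dunford--Pettis $p$-convergent, uses Lemma~\ref{l1}\,(iii) to see that $T(B_{Y})$ is a $p$-Right$^{\ast}$ set in $X$, then invokes Proposition~\ref{p2}\,(iii) (i.e.\ $X^{\ast}$ has $p$-$(SR)$ $\Rightarrow$ $X$ has $p$-$(SR^{\ast})$) to conclude that $T(B_{Y})$ is relatively weakly compact; hence $T$ and $T^{\ast}$ are weakly compact, so $T^{\ast}$ is Dunford--Pettis completely continuous and therefore Dunford--Pettis $q$-convergent, and finally Theorem~\ref{t13} is invoked. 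Thus the paper passes through the $p$-$(SR^{\ast})$ property of $X$ and an adjoint-operator argument, whereas your first route bypasses all of this by observing that Corollary~\ref{c9}\,(iii) applied to $X^{\ast}$ already gives the $(SR)_{p,q}$ property of $X^{\ast}$, after which Proposition~\ref{p4} finishes in one line. Your second route is closer in spirit to the paper's, but slightly cleaner in that you work directly with operators $T:X^{\ast}\to\ell_{\infty}$ rather than with adjoints of operators into $X$; this matches the hypotheses of Theorem~\ref{t13} more transparently. Either of your routes would be a welcome simplification.
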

\begin{proof}
Let $ Y $ be an arbitrary Banach space and $ T\in L(Y,X) $ such that $ T^{\ast} $ be a Dunford-Pettis $ p $-convergent operator.\ Therefore, the part (iii) of
Lemma \ref{l1}, $ T(B_{Y}) $ is a $ p $-Right$ ^{\ast} $ set in $ X. $\ Since $ X^{\ast} $ has the $ p $-$ (SR) $ property, the part (iii) of Proposition \ref{p2} implies that
$ X $ has the $ p $-$ (SR^{\ast})$ property.\ Hence, $ T(B_{Y}) $ is a relatively weakly compact set in $ X $ and so $ T $ is weakly compact.\ Thus, $ T^{\ast} $ is weakly compact and so $ T^{\ast} $ is Dunford-Pettis  completely continuous.\ So, $ T^{\ast} $ is a
Dunford-Pettis $ q $-convergent operator.\ Hence, as an immediate
consequence of the Theorem \ref{t13}, we can conclude that
$ X $ has the $ (SR^{\ast})_{p,q}$ property.
\end{proof}

\begin{thm}\label{t14} If $ 1 <p\leq \infty$, then the following statements are equivalent.
\begin{enumerate}
\item[(i)] $ X^{\ast}$ has the $ p $-$ (DPrcP). $\
\item[(ii)] $ X $ has the $ (SR^{\ast})_{1,p} $ property and $ X^{\ast}$ contains no isomorphic copy of $ c_{0}.$\
\end{enumerate}
\end{thm}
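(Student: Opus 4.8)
The plan is to prove (i) $\Leftrightarrow$ (ii) by transposing the argument of Theorem \ref{t12} to the $\ast$-setting, with Lemma \ref{l1}(iv) serving as the bridge between the $p$-$(DPrcP)$ of $X^{\ast}$ and the $p$-Right$^{\ast}$ subsets of $X$.

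For (i) $\Rightarrow$ (ii): assuming $X^{\ast}$ has the $p$-$(DPrcP)$, Lemma \ref{l1}(iv) says that every bounded subset of $X$ is a $p$-Right$^{\ast}$ set; in particular every $1$-Right$^{\ast}$ set in $X$ is a $p$-Right$^{\ast}$ set, so $X$ has the $(SR^{\ast})_{1,p}$ property. To see that $X^{\ast}$ contains no copy of $c_{0}$, I would first note that the $p$-$(DPrcP)$ passes to closed subspaces, since a sequence that is Dunford--Pettis and weakly $p$-summable in a subspace keeps both properties in the ambient space (functionals extend, and a weakly null sequence in the bidual restricts to a weakly null one on the subspace), and then recall that $c_{0}$ fails the $p$-$(DPrcP)$: its unit vector basis is weakly $p$-summable (as $\ell_{1}\subseteq\ell_{p}$) and Dunford--Pettis (as $c_{0}^{\ast}=\ell_{1}$ has the Schur property) yet not norm null. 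Hence no isomorphic copy of $c_{0}$ can sit inside $X^{\ast}$.

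For (ii) $\Rightarrow$ (i): since $X^{\ast}$ contains no copy of $c_{0}$, Theorem 2.4 of \cite{dm} gives that $X^{\ast}$ has the $1$-Schur property, hence in particular the $1$-$(DPrcP)$. By Lemma \ref{l1}(iv) applied with $p=1$, every bounded subset of $X$ is a $1$-Right$^{\ast}$ set; the $(SR^{\ast})_{1,p}$ property then upgrades each of these to a $p$-Right$^{\ast}$ set, and a last application of Lemma \ref{l1}(iv) yields that $X^{\ast}$ has the $p$-$(DPrcP)$.

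The only step that is not purely formal is the exclusion of $c_{0}$ from $X^{\ast}$ in the first implication: one must check that Dunford--Pettis sets and weakly $p$-summable sequences behave well under passage to subspaces (and to isomorphic copies inside them), so that the failure of the $p$-$(DPrcP)$ for $c_{0}$ genuinely obstructs it for $X^{\ast}$. One should also confirm that Theorem 2.4 of \cite{dm} is stated for an arbitrary Banach space, so that it may be applied to $X^{\ast}$. Everything else is a line-by-line translation of the proof of Theorem \ref{t12}.
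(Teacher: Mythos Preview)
Your proof is correct and follows essentially the same line as the paper's. The only variation is in (i) $\Rightarrow$ (ii): the paper first applies Theorem~\ref{t12} to $X^{\ast}$ to get that $X^{\ast}$ has the $(SR)_{1,p}$ property and contains no copy of $c_0$, and then invokes Proposition~\ref{p4} to pass to the $(SR^{\ast})_{1,p}$ property of $X$, whereas you obtain $(SR^{\ast})_{1,p}$ of $X$ directly from Lemma~\ref{l1}(iv) and argue the $c_0$-exclusion by hand; for (ii) $\Rightarrow$ (i) the two arguments are identical (your use of Lemma~\ref{l1}(iv) amounts to the paper's use of Lemma~\ref{l1}(iii) with $T=\mathrm{id}_X$). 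One small wording slip: in your hereditary argument for the $p$-$(DPrcP)$, ``a weakly null sequence in the bidual'' should read ``in the dual'' --- what you need is that the restriction map $Z^{\ast}\to Y^{\ast}$ is weak--weak continuous, so weakly null sequences in $Z^{\ast}$ restrict to weakly null sequences in $Y^{\ast}$.
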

\begin{proof}
(i) $ \Rightarrow $ (ii) Suppose that $ X^{\ast}$ has the $ p $-$ (DPrcP). $\ By Theorem \ref{t14}, $ X^{\ast} $ has the
$ (SR)_{1,p}$ property and $ X^{\ast}$ contains no isomorphic copy of $ c_{0}.$\ Thus, Proposition \ref{p4} implies that $ X$ has the $(SR^{\ast})_{1,p} $ property.\\
(ii) $ \Rightarrow $ (i) By the hypothesis $X^{\ast}$ contains no isomorphic copy of $ c_{0}.$\ Therefore Theorem 2.4 in \cite{dm} implies that $X^\ast$ has the $ 1 $-Schur property and so $X^\ast$ has the $ 1 $-$ (DPrcP). $\
Therefore, by the part (iii) of Lemma \ref{l1}, $ B_{X} $ is a $ {1} $-Right$ ^{\ast} $ set in $ X.$\ Since $X$ has the $ ( SR^{\ast})_{1,p}$ property, $ B_{X} $ is a $ p $-Right$ ^{\ast} $
set.\ Hence, by reapplying the part (iii) of Lemma \ref{l1}, $ X^{\ast}$ has the $ p $-$ (DPrcP). $\
\end{proof}
Finally,  we present an example of property $ (SR)_{p,q} $ and an example of property $(SR^{\ast})_{p,q}.$\
\begin{ex}\rm\label{e1}
$ \rm{(i)} $ If $ \Omega $ is a compact Hausdorff space, then
$C(\Omega)$ has the $ (SR)_{p,q} $ property.\\
$ \rm{(ii)} $ If $
(\Omega,\Sigma,\mu) $ is any $ \sigma $-finite measure space, then $ L_{1}(\mu) $ has the $(SR^{\ast})_{p,q}$ property.
\end{ex}

\end{document}